\documentclass[twoside,12pt]{amsart}
\usepackage[mathscr]{eucal}
\usepackage{amsmath,amsfonts}
\usepackage[dvips]{graphicx}
\usepackage{color}
\usepackage{amsthm}
\usepackage{amssymb}
\usepackage{latexsym, bbm, bm, subfigure, pict2e, multirow}
\usepackage{etoolbox}
\usepackage{epstopdf}
\preto\subequations{\ifhmode\unskip\fi}
\allowdisplaybreaks
 
% PAGE style--------------------------------------------------
\addtolength{\topmargin}{-.5 in}
\addtolength{\textwidth}{1.5 in}
\addtolength{\oddsidemargin}{-1 in}
\addtolength{\evensidemargin}{-1 in}
\addtolength{\textheight}{1 in}
%% self defined symbols

%\newcommand{\mathbbmtt}{I}
\def\Xint#1{\mathchoice
{\XXint\displaystyle\textstyle{#1}}%
{\XXint\textstyle\scriptstyle{#1}}%
{\XXint\scriptstyle\scriptscriptstyle{#1}}%
{\XXint\scriptscriptstyle\scriptscriptstyle{#1}}%
\!\int}
\def\XXint#1#2#3{{\setbox0=\hbox{$#1{#2#3}{\int}$ }
\vcenter{\hbox{$#2#3$ }}\kern-.58\wd0}}

\def\dashint{\Xint-}

% THEOREM Environments ---------------------------------------------------
\newtheorem{theorem}{Theorem}[section]
\newtheorem{lemma}[theorem]{Lemma}

\numberwithin{equation}{section}
\newtheorem{remark}{Remark}
\numberwithin{remark}{section}

\graphicspath{{figures/}}

\def\tb{\textcolor{blue}}

\begin{document}
\title[Positivity-preserving DDG schemes for PNP equations]{Positivity-preserving third order DG schemes for Poisson--Nernst--Planck equations}
\author[H.~Liu, Z. Wang, P. Yin, H. Yu]{Hailiang Liu$^\dagger$, Zhongming Wang$^\ddagger$,  Peimeng Yin$^\S$ and Hui Yu$^\mathparagraph$ \\  }
\address{$^\dagger$Iowa State University, Mathematics Department, Ames, IA 50011, USA} \email{hliu@iastate.edu}
\address{$^\ddagger$ Florida International University,  Department of Mathematics and Statistics,  Miami, FL 33199, USA}
\email{zwang6@fiu.edu}
\address{$^\S$ Wayne State University,  Department of Mathematics, Detroit, MI 48202, USA}
\email{pyin@wayne.edu}
\address{$^\mathparagraph$ Yau Mathematical Sciences Center, Tsinghua University; Yanqi Lake Beijing Institute of Mathematical Sciences and Applications, Beijing, 100084, China} 
\email{huiyu@tsinghua.edu.cn}

\subjclass{35K51, 65M60}
\keywords{Poisson-Nernst-Planck system, positivity, direct discontinuous Galerkin methods}
%\date{\today}
\begin{abstract} In this paper, we design and analyze third order positivity-preserving discontinuous Galerkin (DG) schemes for solving the time-dependent system of Poisson--Nernst--Planck (PNP) equations, which has found much use in diverse applications. Our DG method with Euler forward time discretization is shown to preserve the positivity of cell averages at all time steps. The positivity of numerical solutions is then restored by a scaling limiter in reference to positive weighted cell averages. The method is also shown to preserve steady states. Numerical examples are presented to demonstrate the third order accuracy and illustrate the positivity-preserving property in both one and two dimensions.
\end{abstract}

\maketitle
%\tableofcontents

\section{Introduction}
In this paper, we propose  
%the continuation of our project, subsequent to  \cite{LW17}, of 
a positivity-preserving third order discontinuous Galerkin (DG) method to solve the Poisson--Nernst--Planck (PNP) system:  
\begin{subequations}\label{mc}
\begin{align}
\partial_t c_i & = \nabla \cdot(\nabla c_i+ q_i c_i \nabla \psi),   \\
- \Delta \psi & = \sum_{i=1}^m q_i c_i  +\rho_0(x),
%&c_i(0, x)  =c_i^{\rm in}(x),  \quad x\in \Omega,\\
% &  \frac{\partial \psi}{\partial  \textbf{n}}  =\sigma, \quad \frac{\partial c_i}{\partial  \textbf{n}}+q_ic_i \frac{\partial \psi}{\partial  \textbf{n}} =0, \quad x\in \partial\Omega,\; t>0,
\end{align}
\end{subequations}
where $c_i=c_i(t, x) $ is the local concentration of $i^{th}$ charged molecular or ion species with charge $q_i$ ($1\leq i\leq m$),  $\psi=\psi(t, x)$ is the electrostatic potential governed by the Poisson equation with $\rho_0$ as the fixed charge. The PNP system has been widely used to describe drift and diffusion phenomena for a variety of devices, including  biological ion channels \cite{Ei98} and  semiconductor devices \cite{MRS90}. %Therefore, computer simulation for this system of differential equations is of great interest.
However, the PNP system is nonlinear and strongly coupled, efficient computation of it is highly non-trivial. One main challenge for developing numerical schemes for (\ref{mc}) is to ensure density positivity (or non-negativity), since negative ion concentrations would be non-physical.

%It is not only for the physical relevance, but also for the making the energy dissipation relation 

%for instance, the computer solution by a high order solver may contain negative ion concentrations, which would be non-physical.  
%is a mathematical model for ion transport with diverse applications (see e.g. \cite{Ei98, MRS90}), 
%and computer simulation for this system is of great interest \cite{}. 
%In this system, the diffusion coefficient, the thermal energy and the dielectric coefficient have been normalized in a dimensionless manner.
%The PNP equations describe the drift and diffusion phenomena for a wide variety of devices (see e.g. \cite{Ei98, MRS90}),  However, the PNP equations present difficulties when computing approximate solutions: it is a strongly coupled system of nonlinear equations,  computational efficiency is of concern for a multi-D numerical solver. Numerical simulation may produce negative ion concentrations or oscillations in the computed solution, if not properly addressed.

%Hence, designing positive and stable methods with rigorous numerical analysis for the PNP system is highly desirable. 
The solution to (\ref{mc}) is known to have three main properties:  mass conservation (when subject to zero flux boundary conditions), non-negativity of density, and free energy dissipation. These intrinsic solution features are naturally desired for any numerical algorithm to solve this system. 
%One of the challenges for solving PNP equations numerically is to faithfully capture long time solution dynamics without excessive computational cost. 
The first property requires the scheme to be conservative. The second property which is also necessary for the third property is most difficult to achieve since it is point-wise. Numerical schemes addressing both density positivity and energy dissipation have been intensively studied. 
%For high order schemes, it is challenging to ensure both solution positivity and energy dissipation simultaneously. 
This is evidenced by recent results in \cite{FMEKLL14, HP16, LW14} with second order finite difference schemes.  
%the discrete level under various assumptions. 
%\textcolor{red}{Second order finite difference type methods %have been introduced in %\cite{FMEKLL14,GaoHe_JSC17,HuHuang_Sub2019,CiCP-27-1505,LW14%,SX21}, which satisfy some or all three properties at the %discrete level under various restrictions.} 
%To preserve density positivity, 
%semi-implicit schemes have been designed 
Based on some formulations of the nonlogarithmic Landau type (see (\ref{nonlog}) below), the semi-implicit schemes in  \cite{DWZ20, HP19, HH20, LW20a, LW20b} have been shown to feature unconditional positivity, while the energy dissipation is handled differently.  For instance, the unconditional positive schemes in \cite{LW20b} are linear, and shown to feature energy dissipation with only an $O(1)$ time step restriction. However, the spacial discretization in all these works is limited to only second order.

%The construction of even higher-order numerical approximations satisfying all three properties is a difficult task. 
The authors in \cite{LW17} presented an arbitrary high order DG method for \eqref{mc}, that naturally incorporates both mass conservation and free energy dissipation properties. Their scheme achieves high order accuracy but the provable positivity-preserving property only holds for certain cases. The DG method is a class of finite element methods, using a completely discontinuous piecewise polynomial space for the numerical solution and the test functions.  
%One main advantage of the DG method is the flexibility afforded by local approximation spaces combined with the suitable design of numerical fluxes crossing cell interfaces. 
More general information about DG methods for elliptic, parabolic, and hyperbolic PDEs can be found in the recent books and lecture notes (see, e.g., \cite{HW07,KWLK00, Ri08, Shu09})

%three solution features to handle the coupling of the Poisson equation and the NP system. % so that the numerical solution remains faithful for long time simulations. 
%Solution positivity is weakly enforced by a scaling limiter based on positive cell averages. However, positivity of cell averages has been proven only for some cases. 
The main purpose of this paper is to propose a direct DG (DDG) method for (\ref{mc}), which is of third order in space and features a provable positivity-preserving property.  %overcome the positivity limitation by  %A general discussion of the PNP problem and background references are given in the introduction to \cite{LW17}. 
The DDG method is a special class of the DG methods introduced in \cite{LY09, LY10} specifically for diffusion. Its key feature lies in numerical flux choices for the solution gradient, which involves interface jumps of both the solution against parameter $\beta_0$, and the second order derivatives against parameter $\beta_1$.  With an admissible condition of form  $\beta_0 > \Gamma_d(k,\beta_1)$, the DDG schemes are provably $L^2$ stable and optimally convergent \cite{Liu15, L21} as well as superconvergent for $\beta_1\not=0$ \cite{CLZ17}. The DDG method has been successfully applied to various application problems, including  linear and nonlinear Poisson equations \cite{HLY12, YHL14, YHL18} and Fokker-Placnk type equations \cite{LW16, LW17, LY14b, LY15}. In this paper, we use such DDG method for solving the Poisson equation (see Section 3.3).  

%More precisely, we have two objectives:
%\begin{itemize}
%\item[(i)] to construct a third order DG method for which the positivity of cell averages are guaranteed;
%\item [(ii)] to implement the positivity-preserving schemes on both one and two-dimensional meshes.
%\end{itemize}
%To fulfill the objective (i), 
For the NP equation, our idea is to use the nonlogarithmic Landau transformation
\begin{equation}\label{nonlog}
c_i=g_ie^{-q_i\psi},
\end{equation}
so that (\ref{mc}a) reduces to 
$$
\partial_t c_i =\nabla \cdot \left( M_i \nabla g_i \right),  \quad c_i  =g_iM_i, \quad M_i=e^{-q_i \psi}, 
$$
to which we apply the DDG spatial discretization.
Here we shall identify the admissible range for the pair $(\beta_0, \beta_1)$ to ensure the positivity-preserving property for density functions $c_i$.

The two main ingredients in our schemes are:  
\begin{itemize}
\item[(i)] A positive decomposition over a test set of three points
for the weighted numerical integration;
\item[(ii)] Positivity-preserving limiter with Euler (or high order SSP-RK) time discretization. The limiter is based on weighted cell averages.  
\end{itemize}
As for (i), the test set is used to stabilize the scheme in $L^\infty$. The use of the DDG method is essential for identifying a test set of form (in one dimensional setting with uniform meshes of size $h$),  
$$
S_j = x_j +\frac{h}{2}\{-1, \gamma, 1\},
$$
where the existence of $\gamma \in (-1, 1)$ is ensured by the DDG method with parameters satisfying  
$$
\frac{1}{8} \leq \beta_1 \leq \frac{1}{4}, \quad \beta_0\geq 1. 
$$
%where $\beta_0, \beta_1$ are the flux parameters in the DDG method. 
The rigorous justification for the existence of  $\gamma$ follows that in \cite{ LY14b} for the linear Fokker-Planck (FP) equation.  

As for (ii), the projection  of $c_i=g_iM_i$ into the DG space allows to  transfer the positivity of cell averages of $c_i$ to the positivity of weighted cell averages of $g_i$, which will be used in defining the limiter.  

The implementation algorithm consists of several steps: (i) from $c_i$ we solve the Poisson equation by the DDG method to obtain $\psi$;  (ii) we further calculate $M_i=e^{-q_i \psi}$, and obtain $g_i$ by the projection of $c_i=g_iM_i$ to the DG space; (iii) we then solve NP equation $\partial_t c_i =\nabla \cdot \left( M_i \nabla g_i \right)$ to obtain $c_i$, while limiter is applied when necessary.  In other words, we first check the non-negativity of $g_i$ on $S_j$. If  negative values show up, the positivity-preserving limiter will be applied. We then apply the DDG method to solve the NP equation.  In 2D, the test set of points in each cell is constructed in a dimension by dimension manner.
  
Note that the established positivity-preserving property for density is independent of the Poisson solver, hence the numerical flux parameters for the Poisson equation can be different from those for the NP equation even within the same DDG framework. 

\subsection{Further related work} 
There is a considerable amount of literature that has been devoted to the numerical study of the PNP system. Many algorithms were introduced to handle specific issues in complex applications, in which one may encounter different numerical obstacles, such as discontinuous coefficients, singular charges,
and geometric singularities to accommodate various phenomena exhibited by biological ion channels; See, e.g., \cite{GNE04,  MZ14,ZCW11}. 

Recent efforts have been on the design of efficient and stable  methods with structure-preserving analysis. On regular domains, results using finite difference/volume for spatial discretization are quite rich, including the works \cite{DWZ20,FMEKLL14,GaoHe_JSC17, HP16, HP19, HH20, LW20a, LW20b,LW14,SX21}, as we discussed above. On irregular domains, Mirzadeh et al \cite{MG14} presented a conservative hybrid method with adaptive strategies. In \cite{PS09}, linearized finite element schemes that preserve electric energy decay and entropy decay properties were presented. A finite element method to the PNP system was introduced in \cite{MXL16} using a logarithmic transformation of the charge carrier densities, while the involved energy estimate resembles the physical energy law that governs the PNP system in the continuous case.  

A related and widely known model is the class of nonlinear Fokker--Planck equations
\begin{equation*}%\label{cf}
\partial_t c =\nabla_x\cdot(f(c)\nabla_x (\psi(x) +H'(c))), 
\end{equation*}
where $f, H$ are some nonlinear functions, and the potential $\psi$ is given. For this model,  the high order DG method introduced in \cite{LW16} is shown to satisfy the discrete entropy dissipation law, extending the result for linear Fokker-Planck equations \cite{LY15}. A high order nodal DG scheme using Gauss-Lobatto quadrature  was developed in \cite{SCS18}, in which both  entropy dissipation and solution positivity are preserved by applying a limiter under a time step constraint, yet accuracy deterioration was observed in some test cases.

Another work quite relevant to ours is \cite{YL19}, in which the authors developed a third order positivity preserving DDG scheme for convection-diffusion equations with anisotropic diffusivity, while one main difficulty stems from the anisotropic diffusion.  
%We would like also mention the work on local DG method in \cite{SCS18} for solving a nonlinear parabolic equations with interaction potentials.
 
We now conclude this section by outlining the rest of this paper: in Section 2 we present the DDG method for the reformulated PNP system in one dimensional case, followed by the proof of positivity preservation in Section 3. We also discuss the positivity-preserving limiter and implementation details in Section 3. The two dimensional DDG scheme and the positivity-preserving analysis are presented in Section 4. Both one and two dimensional numerical examples are tested and results are reported in Section 5.  Finally we conclude the work in Section 6. 
%Some numerical implementation details are listed  in the appendix.

\section{PNP model, reformulation and the DDG scheme}\label{SecForm}
\subsection{The PNP model} Let $\Omega$ be a bounded domain in $\mathbb{R}^d$, with $\textbf{n}$ being a unit exterior normal vector on the boundary $\partial \Omega$.
We consider the initial-boundary value problem
\begin{subequations}\label{PNP}
\begin{align}
&\partial_t c_i= \nabla\cdot(\nabla c_i+q_ic_i\nabla\psi), \quad  x\in  \Omega, \; t>0, \quad i=1,\cdots, m,\\
&-\Delta \psi =  \sum_{i=1} ^m q_i c_i + \rho_0,  \quad x\in  \Omega, \; t>0, \\
&c_i(0,x)=c_i^{\rm in}(x), \quad x\in \Omega;  \quad \frac{\partial c_i}{\partial  \textbf{n}}+q_ic_i \frac{\partial \psi}{\partial  \textbf{n}} =0  \mbox{~~on~}  \partial\Omega, \; t>0,\\
& \psi =\psi_D \mbox{~~on~} \partial\Omega_D,  \mbox{~~and~} \frac{\partial \psi}{\partial  \textbf{n}}  =\sigma  \mbox{~~on~} \partial\Omega_N, \quad t>0, 
\end{align}
\end{subequations}
where %$c_i=c_i(t, x)$ denotes the concentration distribution of i-th species of ions  with charge $q_i(1 \leq i \leq m)$, for which 
initial data $c^{\rm in}$ is given and zero flux boundary conditions are imposed. %as usually done in the literature, see e.g.,  \cite{MG14,MXL16,PS09}. 
%The external electrostatic potential $\psi=\psi(t, x)$ is influenced by applied potential, which can be modeled 
Here $\psi_D$ is the Dirichlet boundary condition, which models an applied  voltage; and $\sigma$ is the Neumann boundary condition, which models surface charge \cite{MXL16}.
%The electrostatic potential $\psi(t, x)$ is prescribed by boundary conditions:  $\psi_D$ on the Dirichlet boundary $\partial \Omega_D$, and $\sigma$ on the remaining Neumann boundary $\partial\Omega_N$.  The Neumann boundary $\sigma$ is the surface charge density.

%$\rho_0$ is the fixed charge.   
% with  $\partial\Omega_D\cap\partial\Omega_N=\O$ and $\partial\Omega_D\cup\partial\Omega_N=\partial\Omega$.  
%In practice, the Dirichlet boundary condition is often applied in studying the current-voltage (I-V) curves, and the Neumann boundary condition is used to model surface charges. [add ref]. 
%For the potential  we consider both Dirichlet and Neumann boundary conditions for the electrostatic potential, i.e., $\psi_D(\cdot)$ is a given electrostatic potential defined on the Dirichlet boundary $\Gamma_D$, and $\sigma(\cdot)$ is the surface charge density defined on the Neumann boundary $\Gamma_N$ with  $\Gamma_D\cap\Gamma_N=\O$ and $\Gamma_D\cup\Gamma_N=\partial\Omega$. 
\subsection{Reformulation of the PNP equations} We recall that an energy satisfying DG method based on the formulation 
\begin{align*}
\partial_t c_i & =\nabla \cdot \left( c_i \nabla p_i \right), \\
p_i & =q_i\psi+\log c_i,\\
-\Delta \psi & = \sum_{i=1}^mq_ic_i+\rho_0(x),
\end{align*}
was developed in \cite{LW17}, where the scheme is of arbitrarily high order, yet positivity of cell averages as needed for the limiting reconstruction is unwarranted.  In this work we  design a novel DG scheme by reformulating the PNP system (\ref{mc}) as
\begin{subequations}\label{mc+}
\begin{align}
\partial_t c_i & =\nabla \cdot \left( M_i \nabla g_i \right), \\
c_i & =g_iM_i, \quad M_i=e^{-q_i \psi},\\
-\Delta \psi & = \sum_{i=1}^mq_ic_i+\rho_0(x).
\end{align}
\end{subequations}
Here $g_i$ is calculated based on given $c_i$ and $M_i$. %The main advantage of using $g_i$ is that the positivity of cell averages can be proved. % based on \cite[Theorem 3.4]{LY14b}.

\subsection{The DDG scheme} \label{ddg} 
For simplicity, we consider the domain $\Omega$ to be a union of rectangular elements denoted by $\mathcal{T}_h = \{K\}$,
and $h$ denotes the mesh size of all the elements of $\mathcal{T}_h$.   
%As usual, we assume the mesh is conforming and shape-regular.
We set the DG finite element space as
$$
V_h=\{v\in L^2(\Omega): \forall K\in \mathcal{T}_h, \; v|_K \in P^k(K)\},
$$
where $P^k(K)$ is the space of polynomial functions of degree at most $k$  on $K$.  For quantities crossing interfaces, we need to  define both jumps and averages.
Let the set of the interior interfaces by $\Gamma^0$. Let the normal vector $n$ be assumed to be oriented from $K_1$ to $K_2$, sharing a common edge (face) $e \in \Gamma^0$, and  $h_e$ is the average of mesh sizes of the two neighboring cells in $n$ direction (or the mesh itself on boundary faces). We define the average $\{w\}$ and the jump $[w]$ of $w$ on $e$  as
$$
\{w\}=\frac{1}{2} (w|_{K_1}+w|_{K_2}), \quad [w]= w|_{K_2} - w|_{K_1} \quad \forall e\in \partial K_1\cap \partial K_2.
$$
%These also apply to both first and second order derivatives along direction $n$, i.e., $\partial_n w$ and $\partial_n^2 w$, respectively.
 
With such approximation space, the semi-discrete DDG scheme  is to find $c_{ih}, g_{ih}, \psi_h \in V_h$ with $M_{ih}:=e^{-q_i \psi_h}$ such that for all $v, r, \eta \in V_h$, $i=1, \cdots, m$,
\begin{subequations}\label{dg+}
{\small
\begin{align}
 & \int_{K}\partial_t c_{ih} v\,dx
=  - \int_{K} M_{ih} \nabla g_{ih} \cdot\nabla v\,dx  + \int_{\partial K }\{M_{ih}\} \left( \widehat{\partial_n g_{ih}} v+ (g_{ih}-\{g_{ih}\})\partial_n v \right)\,ds,\\
& \int_{K} g_{ih}M_{ih} rdx =\int_{K} c_{ih} r dx,\\
 & \int_{K}  \nabla \psi_{h} \cdot \nabla \eta  dx - \int_{\partial K} \left( \widehat { \partial_n \psi_{h}} \eta   +(\psi_h - \{\psi_h\})\partial_n \eta \right)ds  = \int_{K}
  \left(\sum_{i=1}^m q_i c_{ih} +\rho_0\right)
 \eta dx,
\end{align}}
\end{subequations}
where $\widehat{\partial_n g_{ih}}=Fl_n(g_{ih})$ and $\widehat{\partial_n \psi_h}=Fl_n(\psi_h)$,
with the diffusive flux operator $Fl_n(\cdot)$ defined on the interface $e$ by
\begin{align*}%\label{flux}
& Fl_n(w):=\beta_0 \frac{[w]}{h_e} +\{\partial_n w\}+\beta_1h_e[\partial_n^2 w].
\end{align*}
This is the DDG diffusive flux introduced in \cite{LY10} for diffusion. Here the parameters  $(\beta_0, \beta_1)$ are in the range to be specified so that the underlying scheme can satisfy certain positivity--principle.
%Note that here $M_{ih}=e^{-q_i \psi_h}$ is not a polynomial. 
Note that the DDG scheme with interface corrections as we use here was proposed in \cite{LY10} for the diffusion problem, as an improved version of the DG scheme in \cite{LY09}.

\iffalse
\begin{remark}
When $q_i\psi\gg 1$, the exponential weight $e^{-q_i\psi}$ can be  extremely close to $0$, causing unstable evaluation in (\ref{dg+}b). A fix for this is to replace (\ref{dg+}b) by 
\begin{align}\label{cmg} 
\int_{K} g_{ih}rdx=\int_{K} c_{ih} M_{ih}^{-1} r dx.  
 \end{align}
%In our simulation, we observed a numerical difficulty caused by the exponential weight $e^{-q_i\psi}$, which is extremely close to $0$ when $q_i\psi\gg 1$. This may lead to the loss of accuracy in solving  (\ref{dg+}b). For this weighted projection a more stable evaluation is desired. 
\end{remark}
\fi

\subsection{Initial and boundary conditions for the DDG scheme}\label{IBC}
The initial data for $c_{ih}$ is generated by the piecewise $L^2$ projection, $c_{ih}(0,x)=\Pi c_i^{\rm in}(x)$, i.e.,
\begin{equation}\label{proj}
\int_{K} ( c_{ih}(0, x)  - c_i^{\rm in}(x)) v\,dx=0 \quad \forall v\in V_h.
\end{equation}
 The zero-flux boundary condition for $c_{i}$ can be weakly enforced through the boundary fluxes
 for $g_{ih}$ as 
 \begin{align*}%\label{fluxg}
& Fl_n(g_{ih})=0,  \quad \{g_{ih}\}=g_{ih},\quad e\in \partial \Omega.
\end{align*}
For potential $\psi$ with the boundary data given in (\ref{PNP}d), the numerical fluxes on the boundary are defined as follows.
\begin{equation}\label{Dirichleta} 
\begin{aligned}
&\{\psi_h\}=\psi_D,\quad \widehat{ \partial_n \psi_{h}}=\frac{\beta_0}{h_e}(\psi_D-\psi_h) +\partial_\textbf{n} \psi_{h}, \quad e \in \partial \Omega_D,  \\
&\{\psi_h\}=\psi_h,\quad \widehat{ \partial_n \psi_{h}}=\sigma, \quad e \in \partial \Omega_N.
\end{aligned}
\end{equation}

\iffalse
The flux for potential $\psi$ needs to be carefully defined. In the case of one dimensional domain $[a,b]$ with boundary conditions $\psi(t,a)=\psi_a$, $\partial_n \psi(t,b)=\sigma_b$, we use the following discretization at  $x_{1/2}$ and $x_{N+1/2}$,

  \begin{align}
   at \quad &x_{1/2},\label{Dirichleta} \\
  &\{\psi_h\}=\psi_a,\quad  \{M_{ih}\}=  e^{q_i\psi_a} , \notag \\
  &Fl'_n(\psi_{h})=\beta_0(\psi_h^+-\psi_a)/h +\psi_{hx}^+ , \notag\\
at \quad &x_{N+1/2},\label{Neumannb}\\
&\{\psi_h\}=\psi_h^-,\quad  \{M_{ih}\}=   e^{q_i\psi_h^-} , \notag \\
  &Fl'_n(\psi_{h})=\sigma,\notag
%& \{c_{ih}\}= \frac{1}{2}(c_{ir}+c_{ih}^-),\quad   \{\psi_h\}=\frac{1}{2}(\psi_r +\psi_h^-), \notag  \\
%&Fl(\psi_{hx})=\beta_0(\psi_r - \psi_h^-)/h +\psi_{hx}^- ,\notag
\end{align}
where $\psi_h^+$ and $\psi_h^-$ are solution traces from right and left cells, respectively. These Dirichlet and Neumann numerical fluxes \eqref{Dirichleta} and \eqref{Neumannb} can be  extended to high dimension in a dimension-by-dimension manner.
\fi

\section{Positivity-preserving schemes in one dimension} \label{Sec1D} The semi-discrete scheme in Section \ref{SecForm}  is  complete if the parameter pair $(\beta_0, \beta_1)$ is admissible for ensuring the solution  positivity. In this section, we study the positivity preserving property of the DDG scheme with forward Euler discretization in one dimension.  
%if $\beta_0 \geq 1$ and $ \frac{1}{8}  \leq \beta_1 \leq \frac{1}{4}$, under proper restrictions on the time step in one dimension. 
The two dimensions case will be presented in the next section. 

{Note that for $P^k$ polynomials with $k=0$, the DG formulation can lead to the scheme in \cite{LW14}; for $k=1$, second order schemes for $\beta_0\geq 1$ can be designed to preserve positive cell averages by following the techniques in \cite{LY14b}. Here we focus only on third order schemes that feature the positivity-preserving property.}

\subsection{Propagation of positive cell averages} 
We assume the interval $\Omega=\bigcup\limits_{j=1}^NI_j$, where $I_j = [x_{j-\frac12}, x_{j+\frac12}]$. For concise presentation, a uniform mesh $h=|I_j|$ is assumed.
%The DDG method is as follows:  find $c_{ih}, g_{ih}, \psi_h \in V_h$ such that for any $v_i \in V_h$ and $I_j$,
%\begin{align}\label{semi1D}
%\int_{I_j} \partial_t c_{ih} v_i\,dx & =  -\int_{I_j} M_i \partial_x g_{ih} \partial_x v_i\,dx + \left.\{M_i\}\left[\widehat{\partial_x g_{ih}} v_i
%+(g_{ih} -\{g_{ih}\})\partial_x v_i\right]\right|^{x_{j+\frac12}}_{x_{j-\frac12}}.
%\end{align}
We consider the first order Euler forward temporal discretization of (\ref{dg+}) to obtain
{\small \begin{align}\label{fully_1D}
\int_{I_j} \frac{c_{ih}^{n+1}-c_{ih}^n}{\Delta t} v\,dx & = -\int_{I_j} M_{ih}^n \partial_x g^n_{ih} \partial_x v\,dx + \left.\{M^n_{ih}\}\left[\widehat{\partial_x g^n_{ih}}v +(g^n_{ih} -\{g^n_{ih}\})\partial_x v\right]\right|^{x_{j+\frac12}}_{x_{j-\frac12}},
\end{align}
}
where $\Delta t>0$ is the time step, and
$$
M_{ih}^n=e^{-q_i \psi_h^n},
$$
where $\psi_h^n \in V_h$ is obtained from $c_{ih}^n$ by solving (\ref{dg+}c).  In \eqref{fully_1D} we used the following notation
$$\omega|^{x_{j+\frac12}}_{x_{j-\frac12}} =\omega(x^-_{j+\frac12})-\omega(x^+_{j-1/2}).$$

On interfaces $x_{j+1/2}$, $j=1, \cdots, N-1$, the numerical fluxes are chosen as
\begin{align*}
\widehat{\partial_x g_{ih}}& =\frac{\beta_0}{h}[g_{ih}] +\{\partial_x g_{ih}\} +\beta_1 h[\partial_x^2 g_{ih}], %\label{flux_DDG_1D}
\end{align*}
and $\widehat{\partial_x g_{ih}}=0$, $\{g_{ih}\}=g_{ih}$ for $j=0, N$.

From $c_{ih}^n\in V_h$ and $M_{ih}^n$ at each time step, we obtain $g_{ih}^n$ by
\begin{align}\label{cg}
\int_{I_j}   g_{ih}^n M_{ih}^n r dx = \int_{I_j}c_{ih}^n r  dx \quad \forall r \in V_h.
\end{align}
By taking the test function $v = \frac{\Delta t}{h} $ in (\ref{fully_1D}) and $r = \frac{1}{h}$ in (\ref{cg}),
we obtain the evolutionary equation for the cell average,
\begin{equation*}\label{CD_cell_1D}
 \bar c^{n+1}_{ij}=  \langle g^n_{ih} \rangle + \mu h\left. \{M^n_{ih}\} \widehat{\partial_x g_{ih}^n}\right|^{x_{j+\frac12}}_{x_{j-\frac12}},
\end{equation*}
where
$$
 \langle g^n_{ih} \rangle :=  \frac{1}{h}\int_{I_j} g^n_{ih}M^n_{ih} dx=\bar c^n_{ij},
$$
and  $\mu := \frac{\Delta t}{h^2}$ is the mesh ratio.   In order to apply \cite[Theorem 3.4]{LY14b} we set
$$
M(x)=M_{ih}^n(x)=e^{-q_i\psi_h^n(x)}
$$ 
as a piecewise smooth weight, and use the notation
\begin{align*}
\langle \phi \rangle_j&=\frac{1}{2}\int_{-1}^1 \phi(\xi)M(x_j+\frac{h}{2}\xi)d\xi.
%  &=\frac{1}{2} \hat\omega_j^1\phi(-1)M(x_j-\frac{h}{2})+\frac{1}{2} \hat\omega_j^2\phi(\gamma)M(x_j+\gamma\frac{h}{2})+\frac{1}{2} \hat\omega_j^3\phi(1)M(x_j+\frac{h}{2}).
\end{align*}

We also define 
$$
a_j= \frac{\langle \xi -\xi^2\rangle_j }{\langle 1-\xi \rangle_j}, \quad b_j = \frac{\langle \xi +\xi^2\rangle_j }{\langle 1+\xi \rangle_j}.
$$
and 
\begin{align*}
\hat \omega_j^1(\gamma) & = \frac{\langle \gamma -\xi (1+\gamma) +\xi^2\rangle_j} {2(1+\gamma)}, \\
\hat \omega_j^2 (\gamma) & = \frac{\langle 1- \xi^2 \rangle_j}{1-\gamma^2}, \\
\hat \omega_j^3 (\gamma)& =\hat \omega^1(-\gamma).
\end{align*}
Hence for any $p\in P^2[-1, 1]$ we have the decomposition $$
 \langle p \rangle_j=\hat \omega_j^1(\gamma)p(-1)+\hat \omega_j^2(\gamma)p(\gamma)+\hat \omega_j^3(\gamma)p(1).
$$
{We recall the following key result (see also Lemma 2.1 in \cite{YL19})}. 
\begin{lemma}\cite[Lemma 3.3]{LY14b}  $\tilde{\omega}^i_j(\gamma) >0$ for $i=1, 2,3$ if and only if 
$$
\gamma \in (a_j, b_j),
$$
where $a_j, b_j$ satisfy $-1<a_j<b_j<1$. 
\end{lemma}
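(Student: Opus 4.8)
The plan is to recognize the three weights (denoted $\tilde\omega^i_j$ in the statement, and equal to the $\hat\omega^i_j(\gamma)$ defined just above it) as nothing but the weighted integrals of the Lagrange basis polynomials associated with the nodes $-1,\gamma,1$. Writing $\ell_1(\xi)=\frac{(\xi-\gamma)(\xi-1)}{2(1+\gamma)}$, $\ell_2(\xi)=\frac{1-\xi^2}{1-\gamma^2}$ and $\ell_3(\xi)=\frac{(\xi+1)(\xi-\gamma)}{2(1-\gamma)}$, every $p\in P^2[-1,1]$ satisfies $p=p(-1)\ell_1+p(\gamma)\ell_2+p(1)\ell_3$; since three distinct nodes determine these weights uniquely, applying $\langle\cdot\rangle_j$ term by term identifies $\hat\omega_j^i(\gamma)=\langle \ell_i\rangle_j$. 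First I would dispose of the middle weight: because $M>0$ and $1-\xi^2>0$ on $(-1,1)$ the numerator $\langle 1-\xi^2\rangle_j$ is strictly positive, and $1-\gamma^2>0$ for every $\gamma\in(-1,1)$, so $\hat\omega_j^2(\gamma)>0$ holds unconditionally.

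For the two end weights I would reduce positivity to a single linear inequality in $\gamma$ apiece. Introduce the weighted moments $m_k:=\langle\xi^k\rangle_j$. Expanding $\ell_1$ gives $2(1+\gamma)\hat\omega_j^1=m_2-(1+\gamma)m_1+\gamma m_0$, and since $2(1+\gamma)>0$ this is positive iff $\gamma(m_0-m_1)>m_1-m_2$. The key observation is that $m_0-m_1=\langle 1-\xi\rangle_j>0$ (again because $1-\xi>0$ on $(-1,1)$), so dividing preserves the inequality and yields $\hat\omega_j^1(\gamma)>0\iff\gamma>a_j$. Symmetrically, expanding $\ell_3$ and using $m_0+m_1=\langle 1+\xi\rangle_j>0$ gives $\hat\omega_j^3(\gamma)>0\iff\gamma<b_j$. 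Combining the three cases, all weights are positive exactly when $a_j<\gamma<b_j$.

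It remains to locate this interval, i.e. to establish $-1<a_j<b_j<1$. The two outer bounds are immediate, since $a_j+1=\langle 1-\xi^2\rangle_j/\langle 1-\xi\rangle_j>0$ and $1-b_j=\langle 1-\xi^2\rangle_j/\langle 1+\xi\rangle_j>0$. The one genuinely nontrivial step, and the crux of the whole lemma, is the strict ordering $a_j<b_j$. Putting the two fractions over the common positive denominator $\langle 1+\xi\rangle_j\langle 1-\xi\rangle_j$, the numerator of $b_j-a_j$ collapses, after expansion in the moments $m_k$, to $2(m_0m_2-m_1^2)$; I expect this algebraic reduction to be the main obstacle. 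The payoff is that $m_0m_2-m_1^2=\langle 1\rangle_j\langle\xi^2\rangle_j-\langle\xi\rangle_j^2$ is precisely the weighted variance of $\xi$ against the strictly positive measure $M\,d\xi$ on $(-1,1)$, where $M=e^{-q_i\psi_h^n}$ is smooth and positive on each cell; by the Cauchy--Schwarz inequality this is strictly positive because $\xi$ is non-constant. Hence $b_j-a_j>0$, which simultaneously closes the ``if and only if'' and gives the nesting $-1<a_j<b_j<1$.
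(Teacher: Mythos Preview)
Your argument is correct. The identification of the $\hat\omega_j^i(\gamma)$ as weighted integrals of the Lagrange basis, the reduction of positivity of the end weights to the linear inequalities $\gamma>a_j$ and $\gamma<b_j$, the computations $a_j+1=\langle 1-\xi^2\rangle_j/\langle 1-\xi\rangle_j>0$ and $1-b_j=\langle 1-\xi^2\rangle_j/\langle 1+\xi\rangle_j>0$, and the key step $b_j-a_j=2(m_0m_2-m_1^2)/[(m_0+m_1)(m_0-m_1)]>0$ via Cauchy--Schwarz are all sound. One cosmetic point: $m_0m_2-m_1^2$ is $m_0^2$ times the weighted variance, not the variance itself; but since $m_0>0$ this does not affect the conclusion.

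As to the comparison you asked about: the present paper does not supply a proof of this lemma at all---it merely quotes it from \cite[Lemma~3.3]{LY14b}. So there is nothing in this paper to compare your argument against. Your proof is exactly the natural one and is, in spirit, what the cited reference does: the Lagrange-interpolation viewpoint together with Cauchy--Schwarz on the moments is the standard route, and I would expect the original proof in \cite{LY14b} to follow essentially the same lines.
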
 

{
\begin{remark}
In our numerical tests, $\gamma=\frac{1}{2}(a_j+b_j)$ is taken in each cell. Both $a_j$ and $b_j$ depends on $M_i$, hence the set $S_j$ may differ for each $g_i$. 
\end{remark}
}

We thus have the following result.
\begin{theorem}\label{thk2_1D}($k=2$) The scheme (\ref{fully_1D})-(\ref{cg}) with
\begin{align}\label{betak2_1D}
\frac{1}{8} \leq \beta_1  \leq \frac{1}{4} \quad \text{and} \quad  \beta_0\geq 1
\end{align}
is positivity preserving, namely, $\bar{c}_{ij}^{n+1} >0$  if $\bar{c}_{ij}^{n} >0$ and $g_{ih}^n(x) \geq 0  $ on the set $S_j$'s where
$$
S_j = x_j +\frac{h}{2}\left\{-1, \gamma, 1\right\}
$$
with $\gamma$ satisfying
\begin{align*}%\label{ab}
a_j <\gamma <  b_j \quad \text{and} \quad |\gamma|\leq  8\beta_1-1,  % \leq \gamma \leq 1+\frac{4\beta_1-1}{\beta_0-1/2},
\end{align*}
under the CFL condition $\mu \leq \mu_0$, with  $M_{j+1/2}:=\{M^n_{ih}\}|_{x_{j+1/2}}$ in
\begin{align*}
  \mu_0 & =\min_{1\leq j\leq N}
  \left\{
  \frac{\hat \omega_j^1(\pm \gamma)}{\alpha_3(\mp \gamma)M_{j-1/2} +\alpha_1(\pm \gamma)M_{j+1/2}},
  \frac{(1-\gamma^2) \hat \omega_j^2}{2(1-4\beta_1) (M_{j-1/2} +M_{j+1/2})}
  \right\}, 
\end{align*}
where
$$
\alpha_1(\gamma)=\frac{8\beta_1-1+\gamma}{2(1+\gamma)}>0, \quad \alpha_3(\gamma)=\beta_0
+\frac{8\beta_1-3+\gamma}{2(1-\gamma)}>0.
$$
\end{theorem}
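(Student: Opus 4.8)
The plan is to rewrite the one-step cell-average update as a nonnegative combination of the nodal values of $g_{ih}^n$ on the three-point test sets $S_{j-1},S_j,S_{j+1}$, so that the hypothesis $g_{ih}^n\ge 0$ on each $S_j$ immediately yields $\bar c_{ij}^{n+1}\ge 0$. First I would start from the cell-average evolution identity $\bar c^{n+1}_{ij}=\langle g_{ih}^n\rangle_j+\mu h\,\big(\{M^n_{ih}\}\widehat{\partial_x g_{ih}^n}\big)\big|_{x_{j-1/2}}^{x_{j+1/2}}$ and apply to the first term the three-point decomposition stated just above the theorem, $\langle p\rangle_j=\hat\omega_j^1(\gamma)p(-1)+\hat\omega_j^2(\gamma)p(\gamma)+\hat\omega_j^3(\gamma)p(1)$. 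Since $g_{ih}^n|_{I_j}\in P^2$ this is exact, and Lemma~3.3 (i.e.\ \cite[Lemma 3.3]{LY14b}) guarantees that the three quadrature weights are strictly positive precisely under the assumed condition $a_j<\gamma<b_j$.

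Next I would convert the two interface flux terms into the same nodal currency. On each cell $g_{ih}^n$ is the quadratic interpolating its values at the local nodes $\{-1,\gamma,1\}$, so I would use the corresponding Lagrange basis to express the traces of $g$, $\partial_x g$, and the (cell-constant) $\partial_x^2 g$ entering $\widehat{\partial_x g_{ih}^n}=\frac{\beta_0}{h}[g]+\{\partial_x g\}+\beta_1 h[\partial_x^2 g]$ at $x_{j\pm1/2}$ as linear combinations of the six nodal values on $S_{j-1}\cup S_j\cup S_{j+1}$; note that the $\beta_1$ term does not enlarge the stencil because $\partial_x^2 g$ is piecewise constant for $P^2$. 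Substituting and collecting, for each nodal value, its total coefficient in $\bar c_{ij}^{n+1}$, I would separate the contributions carried by the cell's own three nodes (which inherit the quadrature weights $\hat\omega_j^i$) from the purely flux-borne contributions of the two neighbouring cells (which are proportional to $\mu$).

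The third step is the sign analysis. For the neighbour nodes the coefficients are $\mu$ times interface weights, so their nonnegativity reduces to $\alpha_1(\pm\gamma)\ge 0$ and $\alpha_3(\pm\gamma)\ge 0$; the constraint $|\gamma|\le 8\beta_1-1$ makes the numerator $8\beta_1-1+\gamma$ of $\alpha_1$ nonnegative, while rewriting $\alpha_3(\gamma)=\beta_0-\tfrac12-\frac{1-4\beta_1}{1-\gamma}$ and using $1-\gamma\ge 2(1-4\beta_1)$ (again from $|\gamma|\le 8\beta_1-1$) together with $\beta_0\ge1$ gives $\alpha_3>0$ --- this is exactly where $\tfrac18\le\beta_1\le\tfrac14$ and $\beta_0\ge1$ are used. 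For the cell's own nodes each coefficient has the shape $\hat\omega_j^i-\mu\,(\text{interface weight})\ge0$, which becomes an upper bound on $\mu$: the two endpoint nodes $\pm1$ produce the first entry of $\mu_0$, with denominator $\alpha_3(\mp\gamma)M_{j-1/2}+\alpha_1(\pm\gamma)M_{j+1/2}$, while the interior node $\gamma$ produces the second entry, whose weight $2(1-4\beta_1)$ is nonnegative precisely because $\beta_1\le\tfrac14$. Taking the minimum over all cells yields $\mu\le\mu_0$.

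The step I expect to be the main obstacle is the flux bookkeeping in the second and third steps: pushing the Lagrange expansion of $\widehat{\partial_x g_{ih}^n}$ through and reorganizing the result so that the own-node coefficients collapse into the clean thresholds in $\mu_0$ and the neighbour-node coefficients collapse into exactly $\alpha_1$ and $\alpha_3$. This is where the admissible window $\tfrac18\le\beta_1\le\tfrac14$ is pinned down, and the computation parallels the linear Fokker--Planck argument of \cite{LY14b}; the only structural change is that here the weight $M=e^{-q_i\psi_h^n}$ is piecewise smooth (built from the numerical potential) rather than a prescribed smooth weight, which is harmless since the decomposition and all the weights $\langle\cdot\rangle_j$, $\hat\omega_j^i$, $a_j$, $b_j$ are defined cellwise against this $M$. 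Finally, strict positivity $\bar c_{ij}^{n+1}>0$ follows from $\langle g_{ih}^n\rangle_j=\bar c_{ij}^n>0$, which forces at least one nodal value on $S_j$ to be strictly positive while its coefficient in the update stays positive under the CFL condition.
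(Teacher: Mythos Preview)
Your proposal is correct and follows essentially the same approach as the paper. The paper does not write out a self-contained proof of this theorem; it sets up the weighted three-point decomposition and the associated weights $\hat\omega_j^i(\gamma)$, cites \cite[Lemma~3.3]{LY14b} for positivity of those weights, and then states the theorem as an application of \cite[Theorem~3.4]{LY14b} with the piecewise-smooth weight $M(x)=M_{ih}^n(x)=e^{-q_i\psi_h^n(x)}$ in place of the fixed Fokker--Planck weight. Your outline---decompose $\langle g_{ih}^n\rangle_j$ via the three-point quadrature, expand the DDG flux in the Lagrange basis at $\{-1,\gamma,1\}$, collect nodal coefficients, and read off the sign conditions $\alpha_1,\alpha_3>0$ and the CFL thresholds---is exactly the computation underlying that cited theorem, and your identification of where the constraints $\tfrac18\le\beta_1\le\tfrac14$, $\beta_0\ge1$, and $|\gamma|\le 8\beta_1-1$ enter matches the paper's stated parameter range.

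One minor remark on your bookkeeping sketch: the neighbour-cell contributions from each interface involve all three nodes of the adjacent $S_{j\pm1}$, not only the endpoints, so in addition to $\alpha_1$ and $\alpha_3$ there is an interior-node coefficient (of the form $(1-4\beta_1)/(1-\gamma^2)$) whose nonnegativity also requires $\beta_1\le\tfrac14$; this is the same quantity that surfaces in the second CFL threshold, so nothing new is needed, but you should not expect only two neighbour coefficients per interface when you carry out the expansion.
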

\begin{remark} The CFL conditions depend on $\psi^n_h$ %$n$(number of time steps) 
due to the use of $M_{ih}^n=e^{-q_i\psi^n_h}$. They are sufficient conditions rather than necessary to preserve the positivity of solutions. Therefore, in practice, these CFL conditions are strictly enforced only in the case the positivity preserving property is violated.
\end{remark}
\begin{remark}
 The parameter range (\ref{betak2_1D}) was first identified in \cite{LY14b} for a third order DDG scheme to feature the maximum-principle-preserving  property for linear Fokker-Planck equations. 
\end{remark}

\subsection{Limiter}
Theorem \ref{thk2_1D} suggests that for the scheme with forward Euler discretization, we need to modify $g_{ih}^n$ using weight $M(x)=e^{-q_i \psi^n_h(x)}$ on $I_j$ such that it becomes non-negative on $S_j$. This can be done by using the following  scaling limiter. Let $w_h \in  P^k(I_j)$  be a high order approximation to a smooth function $w(x) \geq 0$, with cell averages $\bar{w}_j> 0$, where 
$$
\bar w_j :=\frac{\int_{I_j} M(x) w_h(x)dx}{\int_{I_j} M(x) dx}. 
$$
We then construct another polynomial by
\begin{equation}
\tilde{w}_h(x)= \bar{w}_j+\frac{\bar{w}_j}{\bar{w}_j-\min_{S_j} w_h(x)} (w_h(x)-\bar{w}_j),
\text{ where }
\theta = \min\left\{1,   \frac{\bar{w}_{j}}{\bar{w}_{j} -\min_{ S_{j}} w_h(x) } \right\}. \label{ureconstruct}
\end{equation}
This reconstruction maintains same cell averages and satisfies  $$\min_{S_j} \tilde{w}_h(x)\geq 0.$$
Moreover, it can be shown that if  $\bar w_j >0$, then  the above scaling limiter does not destroy the solution accuracy, as stated in the following lemma. 
\begin{lemma} {\cite[Lemma 3.5]{LY14b}} If $\bar w_j > 0$, then the modified polynomial $\tilde{w}_h$ is as accurate as $w$ in the following sense:
\begin{align*}
 |\tilde w_h(x)-w_h(x)| \leq C_k \| w_h-w\|_{\infty} \quad \forall x\in I_j,
 \end{align*}
where $C_k$ is a constant depending on the polynomial degree $k$.
\end{lemma}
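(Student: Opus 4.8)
The plan is to reduce the inequality to its only nontrivial regime and then to control the single scaling factor $1-\theta$ against the amount by which $w_h$ dips below zero, which nonnegativity of $w$ forces to be small.

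First I would dispose of the trivial case. Reading the reconstruction as $\tilde w_h=\bar w_j+\theta\,(w_h-\bar w_j)$ with $\theta=\min\{1,\ \bar w_j/(\bar w_j-m)\}$ and $m:=\min_{S_j}w_h$, we have $\theta=1$ exactly when $m\ge 0$, and then $\tilde w_h\equiv w_h$, so the bound holds with any $C_k$. Thus assume $m<0$, so that $\theta=\bar w_j/(\bar w_j-m)<1$ and $1-\theta=(-m)/(\bar w_j-m)$. At an arbitrary $x\in I_j$ the defect then factors as
\[
\tilde w_h(x)-w_h(x)=(\theta-1)\bigl(w_h(x)-\bar w_j\bigr)=\frac{-m}{\bar w_j-m}\,\bigl(\bar w_j-w_h(x)\bigr),
\]
so everything is reduced to the two factors $(-m)/(\bar w_j-m)$ and $\bar w_j-w_h(x)$.

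Second, I would turn the nonnegativity of $w$ into a bound on the overshoot. Set $\varepsilon:=\|w_h-w\|_\infty$ and let $x^\ast\in S_j$ realize the minimum $m$. Since $w\ge 0$,
\[
m=w_h(x^\ast)\ge w(x^\ast)-\varepsilon\ge-\varepsilon,
\]
hence $0<-m\le\varepsilon$. Because $\bar w_j>0$ we also have $\bar w_j-m\ge\bar w_j>0$, so $(-m)/(\bar w_j-m)\le\varepsilon/(\bar w_j-m)$.

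Third, and this is the main obstacle, I would prove the scale-invariant estimate
\[
\bigl|\bar w_j-w_h(x)\bigr|\le C_k\,(\bar w_j-m)\qquad\text{for all }x\in I_j .
\]
The mechanism is that $\bar w_j$ is a genuine convex combination of the three test values: by the positive decomposition recalled before Theorem~\ref{thk2_1D} one has $\bar w_j=\sum_{i}\lambda_i\,w_h(s_i)$ with $s_i\in S_j$, $\lambda_i=\hat\omega_j^i(\gamma)/\sum_\ell\hat\omega_j^\ell(\gamma)>0$ and $\sum_i\lambda_i=1$, the positivity being exactly the preceding lemma. Consequently $\bar w_j-m=\sum_i\lambda_i\bigl(w_h(s_i)-m\bigr)$ is comparable to the oscillation of $w_h$ over $S_j$, and since $S_j$ consists of $k+1=3$ distinct nodes $w_h$ is the Lagrange interpolant of its values there; passing to the reference interval $[-1,1]$ (which removes the $h$-dependence) and invoking equivalence of norms on the finite-dimensional space $P^k$ bounds $\max_{I_j}|w_h-\bar w_j|$ by that same oscillation. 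The delicate point is the uniformity of the resulting constant: it must depend on $k$ only and not on the cell or on $M$, which I would secure by noting that the admissible range $\tfrac18\le\beta_1\le\tfrac14$, $\beta_0\ge1$ together with $\gamma\in(a_j,b_j)$, $|\gamma|\le 8\beta_1-1$ confines the interpolation nodes to a fixed compact subinterval of $(-1,1)$ and keeps the weights $\lambda_i$ bounded away from $0$ and $1$.

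Combining the three steps gives, for every $x\in I_j$,
\[
|\tilde w_h(x)-w_h(x)|=\frac{-m}{\bar w_j-m}\,\bigl|\bar w_j-w_h(x)\bigr|\le\frac{\varepsilon}{\bar w_j-m}\cdot C_k\,(\bar w_j-m)=C_k\,\|w_h-w\|_\infty,
\]
as claimed. I expect the uniform control of the norm-equivalence constant in the third step to be the only part requiring genuine care; the first two steps are essentially algebraic once the convex-combination structure of $\bar w_j$ is in hand.
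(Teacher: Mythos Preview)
The paper does not actually prove this lemma; it is quoted verbatim from \cite[Lemma~3.5]{LY14b} with no argument supplied here, so there is no in-paper proof to compare against. Your outline is in fact the standard route used in \cite{LY14b} (and, in the unweighted setting, in \cite{ZS10}): factor $\tilde w_h-w_h=(\theta-1)(w_h-\bar w_j)$, convert the overshoot $-m$ into the approximation error via $w\ge 0$, and then control $|w_h(x)-\bar w_j|$ by $\bar w_j-m$ using that $\bar w_j$ is a convex combination of the values of $w_h$ on the three-point test set together with a Lagrange/norm-equivalence argument on $P^2$. So at the level of strategy your proposal matches the cited proof.

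The one point that deserves a warning is the uniformity you flag in Step~3. Your constant comes out as roughly $(\Lambda+1)/\lambda_{\min}$, with $\Lambda$ the Lebesgue constant of the nodes $\{-1,\gamma,1\}$ and $\lambda_{\min}=\min_i\hat\omega_j^i(\gamma)/\langle 1\rangle_j$. In \cite{LY14b} the weight $M=e^{-U}$ is a fixed given function, so both quantities are a priori bounded and the constant truly depends only on $k$. In the present PNP setting $M=M_{ih}^n=e^{-q_i\psi_h^n}$ varies with the solution, and the bound $|\gamma|\le 8\beta_1-1$ by itself does not keep $\gamma$ in a fixed compact subset of $(-1,1)$ when $\beta_1=\tfrac14$; one has to appeal additionally to $\gamma\in(a_j,b_j)$ and to uniform bounds on $\psi_h^n$. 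Your identification of this as ``the only part requiring genuine care'' is accurate: without some uniform control on $M$ (equivalently on $\psi_h$), the constant $C_k$ you obtain is really $C_k(M)$, not $C_k$ alone. The paper sidesteps this by citing the lemma rather than re-proving it in the coupled setting.
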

The limiting techniques of this nature are inspired by the limiter introduced for conservation laws \cite{ZS10}; see also invariant region preserving limiters for systems of conservation laws in \cite{JL18, JL19}.

\subsection{DDG discretization for the Poisson problem} 
Note that the established positivity-preserving property for density is independent of the Poisson solver, hence the DDG flux parameters for the Poisson equation can be chosen independent of those for the NP equation. 

We now investigate the admissible parameters $(\beta_0, \beta_1)$ of the DDG scheme for the Poisson problem. Such scheme in its global formulation, obtained from the DDG scheme (\ref{dg+}c), is to find $\psi_h \in V_h$ so that
\begin{equation}\label{DDGPoisson}
    A(\psi_h, \eta)=L(c_{ih},\eta),\quad \forall \eta \in V_h,
\end{equation}
where $A(\psi_h, \eta)$ and $L(c_{ih}, \eta)$ are given by
\begin{equation*}
\begin{aligned}
A(\psi_h, \eta) = & \sum_{K\in \mathcal{T}_h}\int_K \nabla \psi_h \cdot \nabla \eta dx + \sum_{e\in \Gamma^0}\int_e \left( \widehat{\partial_n\psi_h}[\eta]+\{\partial_n\eta\}[\psi_h]\right)ds\\
& + \int_{\partial \Omega_D} \left(  \frac{\beta_0}{h_e}\psi_h - \partial_\textbf{n}\psi_h \right) \eta - \psi_h \partial_\textbf{n}\eta ds,
\end{aligned}
\end{equation*}
\begin{equation*}
\begin{aligned}
L(c_{ih},\eta) = & \sum_{K\in \mathcal{T}_h}\int_K  \left(\sum_{i=1}^m q_i c_{ih} +\rho_0\right)
 \eta dx + \int_{\partial \Omega_D} \left(  \frac{\beta_0}{h_e}\eta - \partial_\textbf{n}\eta \right) \psi_D ds + \int_{\partial \Omega_N} \sigma \eta ds.
\end{aligned}
\end{equation*}
It has been shown in \cite{L21} that there exists a number $\beta_0^*>1$ depending on $\beta_1,  k$ and the mesh geometry such that if $\beta_0>\beta_0^*$, then 
\begin{equation}\label{croc}
A(\eta,\eta) \geq \gamma \|\eta\|_E^2, \quad \eta \in V_h, 
\end{equation}
form some $\gamma \in (0,1)$. Here the energy norm is defined by  
\begin{equation*}%\label{enorm}
\|\eta\|_E = \left( \sum_{K\in \mathcal{T}_h}\int_K |\nabla \eta |^2 dx + \sum_{e\in \Gamma^0}\int_e \frac{1}{h_e} [\eta]^2ds + \int_{\partial \Omega_D} \frac{1}{h_e}\eta^2 ds \right)^{\frac{1}{2}}. 
\end{equation*} 
%Moreover, we have the following result.
Note that for uniform rectangular meshes, we simply use $h$ for $h_e$.
\iffalse
\begin{lemma}
Suppose that $\partial \Omega_D \not = \emptyset$. 
For $\eta \in V_h$, we have 
\begin{align}\label{normequi}
    \|\eta\|\leq C_0\|\eta\|_E,
\end{align}
where $C_0>0$ is interdependent of $\eta$.
\end{lemma}
\begin{proof} If $\partial \Omega_D \not = \emptyset$, then $\|\eta\|_E$  is a norm. (\ref{normequi}) thus follows from the norm equivalence in finite dimensional spaces.
\end{proof}

\begin{lemma}
For $\eta \in V_h$ and $|\partial \Omega_D| \not = 0$, we have 
\begin{align}\label{normequi}
    \|\eta\|\leq C_0\|\eta\|_E,
\end{align}
where $C_0>0$ is interdependent of $\eta$.
\end{lemma}
\begin{proof} If $\partial \Omega_D \not = \emptyset$, then 
$\|\eta\|_E$  is a norm. (\ref{normequi}) thus follows from the norm equivalence in finite dimensional spaces.
More specifically, for $d=1,2$, it is shown in \cite{} that 
$$
\|\eta\|_{L^2(\mathbf{R}^d)} \leq \|\eta\|_{BV},
$$
\end{proof}
\fi
A sufficient condition given in \cite{L21} is $\beta_0^* =
\Gamma_d(\beta_1)$ on all cell interfaces,  and $\beta_0^* = 2\Gamma_d(0)$ on the Dirichlet boundary faces $\subset \partial \Omega_D$.
%$$
%\beta_0^*:=\max\{\Gamma_d(\beta_1), 2\Gamma_d(0)\}. 
%$$
For 1D and 2D rectangular meshes, the result in \cite{Liu15} gives  
$$
\Gamma_d(\beta_1) = k^2\left(1-\beta_1(k^2-1) + \frac{\beta_1^2}{3}(k^2-1)^2\right).
$$
%and it achieves the minimum $\frac{k^2}{4}$ at $\beta_1=\frac{3}{2(k^2-1)}$.
Hence for the DDG scheme (\ref{DDGPoisson}) to the Poisson problem alone, we have the following result. 
\begin{theorem}\label{DDGStabThm} Suppose that  $\partial \Omega_D \not = \emptyset$. Given $c_{ih}$, if $\beta_0>\beta_0^*$, the DDG scheme (\ref{DDGPoisson}) admits a unique $\psi_h$. Moreover, let $\psi_h$ and $\tilde{\psi}_h$ be two solutions of (\ref{DDGPoisson}) corresponding to $c_{ih}$ and $\tilde{c}_{ih}$, respectively, then
\begin{equation}\label{DGstab}
\|\psi_h-\tilde{\psi}_h\| \leq C \sum_{i=1}^m |q_i|\|c_{ih}-\tilde{c}_{ih}\|    
\end{equation}
for some constant $C$ independent of $h$. 
\end{theorem}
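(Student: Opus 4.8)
The plan is to obtain both assertions directly from the coercivity estimate (\ref{croc}), which is available under the standing hypothesis $\beta_0>\beta_0^*$, together with a discrete Poincar\'e--Friedrichs inequality on $V_h$ that holds precisely because $\partial\Omega_D\neq\emptyset$.

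First I would settle existence and uniqueness. Since (\ref{DDGPoisson}) is a square linear system on the finite-dimensional space $V_h$, it suffices to show that the homogeneous problem $A(\psi_h,\eta)=0$ for all $\eta\in V_h$ forces $\psi_h=0$. Taking $\eta=\psi_h$ and invoking (\ref{croc}) gives $\gamma\|\psi_h\|_E^2\leq A(\psi_h,\psi_h)=0$; because $\partial\Omega_D\neq\emptyset$, the Dirichlet penalty term in $\|\cdot\|_E$ makes it a genuine norm on $V_h$, so $\psi_h=0$. Injectivity of a linear map on a finite-dimensional space implies surjectivity, hence (\ref{DDGPoisson}) has a unique solution for every right-hand side $L(c_{ih},\cdot)$. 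No symmetry of $A$ is needed here.

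For the stability bound I would subtract the two weak formulations. Writing $e_h:=\psi_h-\tilde\psi_h$ and using linearity of $A$ in its first argument together with the fact that the data contributions ($\rho_0$, $\psi_D$, $\sigma$) in $L$ are common to both problems, these terms cancel and one is left with
\[
A(e_h,\eta)=\sum_{K\in\mathcal{T}_h}\int_K\Big(\sum_{i=1}^m q_i(c_{ih}-\tilde c_{ih})\Big)\eta\,dx,\qquad \forall\,\eta\in V_h.
\]
Choosing $\eta=e_h$, bounding the left side below by (\ref{croc}) and the right side above by Cauchy--Schwarz yields
\[
\gamma\|e_h\|_E^2\leq\sum_{i=1}^m|q_i|\,\|c_{ih}-\tilde c_{ih}\|\,\|e_h\|.
\]
The decisive step is to convert the $L^2$ factor $\|e_h\|$ into the energy norm $\|e_h\|_E$ via a discrete Poincar\'e--Friedrichs inequality $\|\eta\|\leq C_0\|\eta\|_E$ valid for all $\eta\in V_h$ with $C_0$ independent of $h$. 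Granting this, dividing by $\|e_h\|_E$ (the case $e_h=0$ being trivial) gives $\gamma\|e_h\|_E\leq C_0\sum_i|q_i|\|c_{ih}-\tilde c_{ih}\|$, and one further application of the same inequality produces
\[
\|e_h\|\leq C_0\|e_h\|_E\leq\frac{C_0^2}{\gamma}\sum_{i=1}^m|q_i|\,\|c_{ih}-\tilde c_{ih}\|,
\]
which is the claimed estimate with $C=C_0^2/\gamma$ independent of $h$.

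The main obstacle is the $h$-uniformity of $C_0$. A naive appeal to norm equivalence on the finite-dimensional space $V_h$ only bounds $\|\cdot\|$ by $\|\cdot\|_E$ for each fixed mesh and would let $C_0$ degenerate as $h\to 0$, destroying the $h$-independence of $C$. To avoid this I would instead invoke a genuine discrete Poincar\'e--Friedrichs inequality for broken polynomial spaces with jump penalization (of Brenner type), which controls the broken $L^2$ norm by the broken gradient together with the interior-face and Dirichlet-boundary jump terms uniformly in $h$, and which crucially requires $\partial\Omega_D$ to have positive measure in order to pin down the additive constant. Everything else---linearity, cancellation of the data, Cauchy--Schwarz, and the given coercivity---is routine.
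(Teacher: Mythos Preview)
Your proof is correct and follows essentially the same route as the paper: subtract the two weak formulations, test with $e_h=\psi_h-\tilde\psi_h$, invoke the coercivity (\ref{croc}), and close with a discrete Poincar\'e--Friedrichs inequality $\|\eta\|\le C_0\|\eta\|_E$, arriving at the identical constant $C=C_0^2/\gamma$. Your discussion of the $h$-uniformity of $C_0$ is in fact more careful than the paper's, which simply asserts that $\sup_{w\in V_h,\,w\neq 0}\|w\|/\|w\|_E$ is uniformly bounded when $\partial\Omega_D\neq\emptyset$; your explicit appeal to a Brenner-type broken Poincar\'e inequality is exactly the right justification.
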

\begin{proof} (\ref{DGstab}) when $c_{ih}=\tilde c_{ih}$ implies that there exists a unique solution $\psi_h$ for given $c_{ih}$.  We only need to prove  (\ref{DGstab}). 
%Let the error of two possible solutions with same $c_{ih}$ be $e_h$, then we have $A(e_h, \eta)=0$. This when combined with (\ref{DGstab}) leads to $e_h \equiv=0$. Then there must be a unique solution for this finite dimensional linear problem.  
%In finite dimensional space, existence is equivalent to %its uniqueness, which can be determined by %(\ref{DGstab}). 
The difference of (\ref{DDGPoisson}) with  $c_{ih}$ and $\tilde{c}_{ih}$ respectively, gives
$$
A(\psi_h-\tilde{\psi}_h, \eta) = \sum_{i=1}^m q_i(c_{ih}-\tilde{c}_{ih}, \eta).
$$
By taking $\eta=\psi_h-\tilde{\psi}_h \in V_h$ and applying (\ref{croc}), we have
$$
\gamma \|\eta\|^2_E \leq A(\eta,\eta) \leq \sum_{i=1}^m |q_i|\|c_{ih}-\tilde{c}_{ih}\|\|\eta\|.
$$
%By the discrete Sobolev embedding in polynomial space \cite[Theorem 2.1]{DE10}, we have
%$$
%\|\eta\|_E \geq C_0\|\eta\|
%$$
%for some constants $C_0>0$, independent of $\eta$. 
Since $\partial \Omega_D \not = \emptyset$, one can show that $ \sup_{w\in V_h,w\neq 0}  \frac{\|w\|}{\|w\|_E}$ is uniformly bounded from above, hence 
$$
\|\eta\|\leq C_0 \|\eta\|_E
$$
for some $C_0$. Combining these two inequalities, we obtain the claimed bound with $C=C_0^2/\gamma $.
\end{proof}
In connection to the coupled scheme for the NP system we make a few remarks.  
\begin{remark}
(i) A refined analysis similar to that in \cite{L21} would lead to the following error bound: 
\begin{align*}
\|\psi-\psi_h\| \leq C\left( h^{\min\{s, k+1\}} + \sum_{i=1}^m|q_i| \|c_i-c_{ih}\| \right),    
\end{align*} 
where $(c_i, \psi)$ solves the PNP system (\ref{PNP}) with $\psi\in H^s$, and $\psi_h$ is the solution to the DDG scheme (\ref{DDGPoisson}) based on polynomials of degree $k$.
%and $\psi\in H^{s}$ with $s\geq \min\{k,2\}+1$ be the smooth solution of (\ref{PNP}b) with boundary condition (\ref{PNP}d). If $\beta_0>\beta_0^*$, then the following error estimate holds
Though we may use higher order DDG scheme to solve the Poisson problem, the error is limited by the error of the coupled scheme of order $3$ to the NP problem, so our method is only third order.\\
%\end{remark}
%\begin{remark}
(ii) %The numerical flux parameters for DDG scheme (\ref{DDGPoisson}) to the Poisson equation is determined based on well-posedness of the DDG scheme (see Theorem \ref{DDGStabThm}), while those for the NP equation obtained based on positivity-preserving property point-wisely (see Theorem \ref{thk2_1D}). Due to their independence, these numerical flux parameters can take different values, 
One may also apply the DDG method with the same flux parameters $(\beta_0, \beta_1)$ for both the Poisson equation and the NP equation as long as they  are taken from the following range:  
%but there are still common parameters shared by both schemes,
$$
\beta_0 \geq \beta_0^*, \quad \frac{1}{8} \leq \beta_1  \leq \frac{1}{4}.
$$
\end{remark}

{
%\color{blue}
\subsection{Preservation of steady-states} 
With no-flux boundary conditions, scheme (\ref{fully_1D}) can be shown to be steady-state preserving. A steady state for the PNP system is determined by 
$$
c_i=c_i^\infty e^{-q_i\phi},
$$
where  $c_i^\infty>0 $ is a constant, and $\phi$ solves the following Poisson-Boltzmann problem: 
\begin{subequations}\label{PB}  
\begin{align}
& -\Delta \phi=\sum_{i=1}^m q_i c_i^\infty e^{-q_i\phi}  +\rho_0(x), \\
& \phi =\psi_D \mbox{~~on~} \partial\Omega_D,  \mbox{~~and~} \frac{\partial \phi}{\partial  \textbf{n}}  =\sigma  \mbox{~~on~} \partial\Omega_N.
\end{align}
\end{subequations} 
The well-posedness for this problem may be established by a variational approach using the functional 
$$
G[\phi]=\int_{\Omega} \left[ \frac{1}{2}|\nabla \phi|^2 -\rho_0(x)\phi +\sum_{i=1}^m c_i^\infty (e^{-q_i \phi}-1) \right]dx-\int_{\partial \Omega_N }\sigma \phi ds 
$$
with the trial function space 
$$
\left\{\phi \in H^1(\Omega), \quad \phi =\psi_D \mbox{~~on~} \partial\Omega_D\right\}.
$$
For the case  $\partial \Omega_N=\emptyset$, we refer the reader to \cite[Theorem 2.1]{LiSIAMJMA09} (and also  \cite[Theorem 2.1]{Li11}) for a detailed account of existence results using the variational approach for both the point ions and the finite size ions. 

We say a discrete function $c_{ih}$ is at steady-state if
\begin{align}\label{ci}
  c_{ih}=c_i^\infty \Pi e^{-q_i \phi_h},
  \end{align}
where $\phi_h$ satisfies (\ref{dg+}c) for $\phi_h=\psi_h$ with $c_{ih}$ replaced by (\ref{ci}), which is a nonlinear algebraic equation for $\phi_h$. This may serve as a DDG scheme for the Poisson-Boltzmann problem (\ref{PB}). Instead of using this nonlinear scheme, we use our DDG scheme for PNP as an iterative scheme for obtaining $\phi_h$.    

We state the following nice property of the PNP scheme. 
\begin{theorem} %Let the assumptions in Theorem \ref{thk2_1D}  be met. 
Consider the fully discrete scheme (\ref{fully_1D}). If $c_{ih}^0$ is already at steady state, then $c_{ih}^n=c_{ih}^0$ for all $n\geq 1$ with any time step $\Delta t>0$.
\end{theorem}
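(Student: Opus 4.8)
The plan is to argue by induction on $n$ that $c_{ih}^n=c_{ih}^0$ for all $n\ge0$. The inductive step reduces to a single observation: when the scheme is fed a density that is at steady state, the intermediate variable $g_{ih}^n$ collapses to the \emph{constant} $c_i^\infty$, and this constant kills every term on the right-hand side of (\ref{fully_1D}), so that $c_{ih}^{n+1}=c_{ih}^n$. I would therefore assume $c_{ih}^n=c_{ih}^0=c_i^\infty\,\Pi e^{-q_i\phi_h}$ (which holds at $n=0$ by hypothesis) and verify that one forward Euler step returns the same function.

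The core of the argument is to show $g_{ih}^n\equiv c_i^\infty$, which I would do in two stages. First, I solve the Poisson part (\ref{dg+}c) with the steady data $c_{ih}^n$ to get $\psi_h^n$; since the defining relation (\ref{ci}) says that $\phi_h$ is precisely the DDG Poisson solution associated to this very same density, the uniqueness guaranteed by Theorem~\ref{DDGStabThm} forces $\psi_h^n=\phi_h$, hence $M_{ih}^n=e^{-q_i\psi_h^n}=e^{-q_i\phi_h}$. Second, I insert $c_{ih}^n=c_i^\infty\,\Pi M_{ih}^n$ into the weighted projection (\ref{cg}). Using the defining property of the $L^2$ projection $\Pi$ in (\ref{proj}), namely $\int_{I_j}(\Pi M_{ih}^n)\,r\,dx=\int_{I_j}M_{ih}^n\,r\,dx$ for all $r\in V_h$, the right-hand side of (\ref{cg}) becomes $c_i^\infty\int_{I_j}M_{ih}^n r\,dx$, so that
$$
\int_{I_j}\bigl(g_{ih}^n-c_i^\infty\bigr)M_{ih}^n r\,dx=0\qquad\forall\, r\in V_h.
$$
Because $M_{ih}^n=e^{-q_i\psi_h^n}>0$, the weighted mass bilinear form $(u,r)\mapsto\int_{I_j}u\,M_{ih}^n r\,dx$ is symmetric positive definite on $P^k(I_j)$; as $c_i^\infty\in V_h$ is an admissible solution, uniqueness gives $g_{ih}^n\equiv c_i^\infty$.

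With $g_{ih}^n$ a global constant, I would finish by direct substitution into (\ref{fully_1D}): one has $\partial_x g_{ih}^n=0$, $[g_{ih}^n]=0$, $[\partial_x^2 g_{ih}^n]=0$, whence $\widehat{\partial_x g_{ih}^n}=0$ and $g_{ih}^n-\{g_{ih}^n\}=0$ at every interior interface, and these same relations hold (or are imposed) at the no-flux boundary. Consequently both the volume integral and all flux contributions vanish, leaving $\int_{I_j}(c_{ih}^{n+1}-c_{ih}^n)\,v\,dx=0$ for every $v\in V_h$, i.e. $c_{ih}^{n+1}=c_{ih}^n$ on each $I_j$ and hence globally. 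This restores the inductive hypothesis and closes the induction, valid for any $\Delta t>0$. I expect the only genuinely delicate point to be the second stage above, where identifying $\psi_h^n$ with $\phi_h$ (via uniqueness of the Poisson DDG solve) and exploiting the $L^2$-projection cancellation is what forces $g_{ih}^n$ to be exactly the constant $c_i^\infty$; everything downstream is a mechanical verification that a constant annihilates the DDG diffusive flux.
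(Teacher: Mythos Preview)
Your proposal is correct and follows essentially the same approach as the paper: show $g_{ih}^n\equiv c_i^\infty$ via the weighted projection identity (the paper tests with $r=g_{ih}^n-c_i^\infty$, which is exactly your positive-definiteness observation), then observe that a constant annihilates all volume and flux terms in (\ref{fully_1D}), and close by induction. Your explicit invocation of Theorem~\ref{DDGStabThm} to justify $\psi_h^n=\phi_h$ is a detail the paper leaves implicit but is a welcome clarification.
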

\begin{proof} 
%We proceed in two steps:
%Step 1. The existence of steady $\psi_h$ is equivalent to finding $p\in V_h$ such that 
%$$
%-A(p, \eta)=\int_{\Omega} \left( \sum_{i=1}^m q_i e^{-q_i p}+\rho_0 \right)\eta %dx \quad \forall \eta \in V_h.
%$$
%%\tr{Question: this is a nonlinear equation with DDG discretization, how to show existence}
%Step 2. 
Take $c_{ih}^0= c_i^\infty \Pi e^{-q_i \psi_h}$ with $\psi_h:=\phi_h$, which means that for any $r\in V_h$,
$$
\int_K c_{ih}^0 rdx = \int_K c_i^\infty e^{-q_i \psi_h} rdx=\int_K c_i^\infty M_{ih} rdx, 
$$
which when combined with (\ref{dg+}b) implies
\begin{equation*}%\label{ghsol-}
\int_K g_{ih}^0 M_{ih} rdx = \int_K c_i^\infty M_{ih} rdx.
\end{equation*}
Since $g_{ih}^0 \in V_h$, hence we can take $r=g_{ih}^0- c_i^\infty$ to obtain 
$$
\int_K (g_{ih}^0-c_i^\infty)^2 M_{ih}dx=0.
$$
Hence $g_{ih}^0 \equiv c_i^\infty$. Insertion of this into the right of (\ref{fully_1D}) for $n=0$ gives 
$$
c_{ih}^1=c_{ih}^0. 
$$
Same induction ensures the asserted $c_{ih}^n = c_{ih}^0 $ for any $n\geq 1$. Here the size of time steps plays no role. 
\iffalse 
Note that $M_{ih} = e^{-q_i \psi_h}$, so for $\forall r \in V_h$, it follows
$$
\int_K M_{ih} rdx = \int_K \Pi M_{ih} rdx=\int_K c_{ih} rdx,
$$
which when combing with (\ref{dg+}b) implies
\begin{equation}\label{ghsol}
\int_K g_{ih}M_{ih}  rdx = \int_K M_{ih} rdx.
\end{equation}
It is easy to verify that $g_{ih}=1$ is the uniqe solution to (\ref{ghsol}) as well asp/l. (\ref{dg+}b), then we obtain the conclusion by plugging $g_{ih}$ into (\ref{fully_1D}).
\fi
\end{proof}
}

\subsection{Implementation details}\label{dgalg} 
\iffalse
 The fact that we only require $g_h^n$ to be nonnegative at certain points can reduce the computational cost by a great deal. To illustrate, we only present the algorithm for the one-dimensional case. Instead of finding the minimum of $g_h$ on the whole computational cell $I_j$, we take the minimum only on the test set $S_j$.

Given $c^{\rm in}(x)$ we obtain $\psi_h^0\in V_h$. Set $M^0=e^{-q_i\psi_h^0}$ we obtain $g_h^0$ from
$$
\int_{I_j} c^{\rm in} vdx=\int_{I_j}M^0(x) g_h^0dx\quad \forall v\in V_h.
$$
the algorithm is stated below:
 \begin{itemize}
\item  Reconstruction.
Check the point values of $g_h^n$ on the test set $S_j$. If one of them is negative,
reconstruct $\tilde g_h^n$ using the formula (\ref{rp}), and set
$$
 g_h^n = \tilde g_h^n.
 $$
\item  Evolution. Use the scheme (\ref{fully_1D}) to compute $c^{n+1}_h(x)$.  Apply the DDG method to get $\psi^{n+1}_h$ from (\ref{rp}c).  Set $M^{n+1}=e^{-q_i\psi_h^{n+1}}$ we obtain $g_h^{n+1}$ from
$$
\int_{I_j} c^{n+1}_h vdx=\int_{I_j}M^{n+1}(x) g_h^{n+1}dx\quad \forall v\in V_h.
$$
 \end{itemize}
\fi
We summarize our algorithm in following steps.
\begin{itemize}
\item[1.] (Initialization) Project $c_i^{\rm in}(x)$ onto $V_h$, as formulated in (\ref{proj}),  to obtain $c_{ih}^{0}(x)$.
\item[2.] (Poisson solver) From $c_i=c_{ih}^n$ Solve (\ref{dg+}c) to obtain $\psi=\psi_h^n$ subject to the boundary conditions \eqref{Dirichleta}.
\item[3.] (Projection)  Solve (\ref{dg+}b) to obtain $g_{ih}^n$.
\item[4.] (Reconstruction) Apply, if necessary, the scaling limiter \eqref{ureconstruct} on $g_{ih}^n$ to ensure that in each cell $g_{ih}^n \geq 0$ on $S_j$. 
\item[5.] (NP equation)  Solve  the fully discrete equation \eqref{fully_1D} to update $c_{ih}^{n+1}$ with second order Runge-Kutta (RK) ODE solver.
\item[6.] Repeat steps 2-5 until final time $T$.
\end{itemize}
\begin{remark}
The forward Euler time discretization in (\ref{fully_1D}) can be extended to higher order SSP Runge-Kutta method \cite{GST01}, which is a convex linear combination of the forward Euler method. The desired positivity preserving property is then ensured under a suitable CFL condition (see, e.g., \cite{ZS10}). 
We chose to use second order time discretization since it is sufficient not to outweigh the spatial error that comes from our third order DDG schemes.
\end{remark}
{
\begin{remark}
 The time step restriction $\Delta t \sim O(h^2)$ is obviously a drawback of the explicit time discretization. Usually one would use implicit in time discretization for diffusion and explicit time discretization for the nonlinear drift term (called IMEX in the literature) so that the time step restriction could be relaxed. Unfortunately, formulation (\ref{mc+}) does not support such a separation.
\end{remark}
}

\section{Positivity-preserving schemes in two dimensions} \label{SecMD}
In this section we extend our result to the two dimensional case.
\subsection{Scheme formulation with rectangular meshes}
Let the two dimensional domain $\Omega =[0, L_x]\times [0, L_y]$ be partitioned by uniform rectangular meshes  so that $\Omega=\cup I_{jl}$ with
$$
I_{jl}=[x_{j-\frac12}, x_{j+\frac12}]\times [y_{l-\frac12}, y_{l+\frac12}],
\quad  1\leq j\leq P, \; 1\leq l\leq Q,
$$
where $P$ and $Q$ are two positive integers, the mesh sizes $\Delta x= \frac{L_x}{P}$, $\Delta y= \frac{L_y}{Q}$ and
\[
x_{j+\frac12} = j\Delta x, \qquad y_{l+\frac12} = l\Delta y, \quad 0\leq j \leq P, \quad 0\leq l \leq Q.
\]
Consider the NP equation of form  
$$
\partial_t c
	= \partial_x (M \partial_x g) +\partial_y(M\partial_y g), \quad (x, y)\in \Omega \subset \mathbb{R}^2,
$$
subject to the initial condition $c(0, x, y)=c^{\rm in}(x, y)$  and  zero-flux boundary condition. Here $(c, g, M)=(c_i, g_i, M_i)$ for $i=1, \cdots, m$. The fully discrete scheme with Euler forward  discretization and the DDG spatial discretization is as follows: 
\begin{align}\label{fully2D}
\int_{I_{jl}}c_{h}^{n+1}v\,dxdy
=& \int_{I_{jl}}c_h^nv\,dxdy -\Delta t\int_{I_{jl}} M_{ih}^n\nabla g_h^n\cdot\nabla v\,dxdy \notag\\
&+ \Delta t\left.\int_{y_{l-\frac12}}^{y_{l+\frac12}}\{M_{ih}^n\} \left[\widehat{\partial_x g_h^n}v +(g_h^n -\{g_h^n\})\partial_x v\right]\,dy\right|_{x_{j-\frac12}}^{x_{j+\frac12}} \nonumber\\
&+ \Delta t\left.\int_{x_{j-\frac12}}^{x_{j+\frac12}}\{M_{ih}^n\}\left[\widehat{\partial_y g_h^n}v +(g_h^n -\{g_h^n\})\partial_y v\right]\,dx\right|_{y_{l-\frac12}}^{y_{l+\frac12}},
\end{align}
where
\begin{align*}%\label{beta2D}
\left.\widehat{\partial_x g_h}\right|_{(x_{j+\frac12},y)}& =\frac{\beta_0}{\Delta x}[g_h] +\{\partial_x g_h\} +\beta_1 \Delta x [\partial_x^2 g_h],\\
\left.\widehat{\partial_y g_h}\right|_{(x,y_{l+\frac12})}& =\frac{\beta_0}{\Delta y}[g_h] +\{\partial_y g_h\} +\beta_1 \Delta y [\partial_y^2 g_h],
\end{align*}
on interior interfaces, and on the boundary they are zero and $\{g_h^n\}=g_h^n$.
%for $j=0, P$ and $l=0, Q$, respectively.

Similar to the one-dimensional case, we introduce the cell average as
\begin{equation*}
\bar c_{jl} = \frac{\int_{I_{jl}}c_h\,dxdy}{\Delta x \Delta y}
	= \dashint_{x_{j-\frac12}}^{x_{j+\frac12}} \dashint_{y_{l-\frac12}}^{y_{l+\frac12}}c_h\,dxdy,
\end{equation*}
where $\dashint$ denotes the average integral. We obtain the cell average update from  (\ref{fully2D}) with $v=1$, divided by $\Delta x\Delta y$
\begin{align}\label{EF2Dv1}
\bar c^{n+1}_{jl} =& \bar c^n_{jl}
+ \mu_x\Delta x \left.\dashint_{y_{l-\frac12}}^{y_{l+\frac12}}\{M_{ih}^n\}\widehat{\partial_x g_h^n}\,dy\right|_{x_{j-\frac12}}^{x_{j+\frac12}}
+ \mu_y\Delta y \left.\dashint_{x_{j-\frac12}}^{x_{j+\frac12}}\{M_{ih}^n\}\widehat{\partial_y g_h^n}\,dx\right|_{y_{l-\frac12}}^{y_{l+\frac12}},
\end{align}
where $\mu_{x} = \frac{\Delta t}{(\Delta x)^2}$ and $\mu_{y} = \frac{\Delta t}{(\Delta y)^2}$.
Let $\mu= \mu_x + \mu_y$ and decompose $\bar c^n_{jl}$ as
\[
\bar c^n_{jl} = \frac{\mu_x}{\mu}\bar c^n_{jl} + \frac{\mu_y}{\mu}\bar c^n_{jl}=\frac{\mu_x}{\mu}\langle g^n_{h}\rangle_{jl} + \frac{\mu_y}{\mu}\langle g^n_{h}\rangle_{jl}, \quad \Delta x\Delta y \langle g \rangle_{jl}=\int_{I_{jl}} M_{ih}^n g(x)dx,
\]
so that (\ref{EF2Dv1}) can be rewritten as
\begin{align}\label{EF2DH}
 \bar c^{n+1}_{jl}  = \frac{\mu_x}{\mu}\dashint_{y_{l-\frac12}}^{y_{l+\frac12}}H_1(y)\,dy +\frac{\mu_y}{\mu}\dashint_{x_{j-\frac12}}^{x_{j+\frac12}} H_2(x)\,dx,
\end{align}
where
\begin{align*}
H_1(y) & =\langle g_h^n\rangle_j (y)
	+ \mu \Delta x\left. \{M_{ih}^n\}\widehat{\partial_x g_{h}^n}\right|_{x_{j-\frac12}}^{x_{j+\frac12}},\\
H_2(x) & =\langle g_h^n\rangle_l (x)
	+ \mu \Delta y\left. \{M_{ih}^n\}\widehat{\partial_y g_{h}^n}\right|_{y_{l-\frac12}}^{y_{l+\frac12}},
\end{align*}
here we have used the notation
$$
\langle g\rangle_j (y)=\frac{1}{\Delta x}\int_{x_{j-1/2}}^{x_{j+1/2}}g(x, y)M_{ih}^n(x,y)dx, \quad
\langle g\rangle_l (x)=\frac{1}{\Delta y}\int_{y_{l-1/2}}^{y_{l+1/2}}g(x, y)M_{ih}^n(x,y)dy.
$$
The two integrals in (\ref{EF2DH}) can be approximated by quadratures with sufficient accuracy. Let us assume that we use a Gauss quadrature with $L\geq \frac{k+2}{2}$ points, which has accuracy of at least $O(h^{k+2})$ with $h=\max\{\Delta x, \Delta y\}$.
 Let
\begin{equation*}%\label{SQ}
S_{j}^x = \{x_j^{\sigma}, \sigma = 1, \ldots, L\} \qquad \text{ and } \qquad
S_{l}^y = \{y_l^{\sigma}, \sigma = 1, \ldots, L\}
\end{equation*}
denote the quadrature points on $[x_{j-\frac12}, x_{j+\frac12}]$ and $[y_{l-\frac12},y_{l+\frac12}]$,  respectively.  The superscript $\sigma$ will denote the index of the Gauss quadrature points and  $\omega^{\sigma}$'s are the quadrature weights at the quadrature points, so that
$
\sum_{\sigma=1}^L \omega^{\sigma}=1.
$
Using the quadrature rule on the right-hand side of (\ref{EF2DH}), we obtain the following
\begin{equation}\label{EF2DHQ}
\bar c^{n+1}_{jl}= \frac{\mu_x}{\mu}\sum_{\sigma=1}^L \omega^{\sigma}H_1(y_l^\sigma) + \frac{\mu_y}{\mu}\sum_{\sigma=1}^L \omega^{\sigma}H_2(x_j^\sigma).
\end{equation}
Applying the one-dimensional result in Theorem \ref{thk2_1D} to both $H_1(y_l^\sigma)$ and $H_2(x_j^\sigma)$, we can establish the positivity-preserving  result for the two-dimensional case.  Let
\begin{equation*}%\label{Sxy}
\hat S^x_j = x_j + \frac{\Delta x}{2}\{-1, \gamma^x, 1\} \qquad \text{ and } \qquad
\hat S^y_l = y_l + \frac{\Delta y}{2}\{-1, \gamma^y, 1\},
\end{equation*}
denote the test set on $[x_{j-\frac12}, x_{j+\frac12}]$ and $[y_{l-\frac12}, y_{l+\frac12}]$, respectively,  with $\gamma$ satisfying
\begin{align*} %\label{angle2D}
a_j <\gamma^x<b_j, \quad a_l <\gamma^y<b_l,  \quad |\gamma^x, \gamma^y |\leq 8\beta_1-1.
\end{align*}
Note that $a_j, b_j, \gamma^x$ depend on $y_l^\sigma$, and $a_l, b_l, \gamma^y$ depend on $x_j^\sigma$, through the weights
$M(x, y^\sigma_l)$ and $M(x_{j}^\sigma, y)$, respectively.   We use $\otimes$ to denote the tensor product and define
\begin{equation*}%\label{set}
S_{jl}=(S_j^x\otimes\hat S^y_l)\cup (\hat S_j^x \otimes S_l^y).
\end{equation*}
\begin{theorem}\label{th2dk2}($k=2$)
Consider the two dimensional  DDG method  (\ref{fully2D}) on rectangular meshes, associated with the approximation DG  polynomials $c^n_h(x, y)$ of degree $k$,  with $(\beta_0, \beta_1)$  chosen so that
$$
\frac{1}{8}  \leq  \beta_1 \leq \frac{1}{4}  \qquad\text{ and } \qquad \beta_0 \geq 1.
$$
If  $g_h^n(x, y) \geq 0 $ for all $(x, y)\in S_{jl}$ and $\bar c^{n}_{jl}>0$,  then $\bar c^{n+1}_{jl} >0 $ under the CFL condition
\[
\mu < \mu_0,
\]
where $\bar c^{n+1}_{jl}$ is given in (\ref{EF2DHQ}), $\mu_0$ is given in (\ref{lambda02D}) below.  
\end{theorem}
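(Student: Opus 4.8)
The plan is to reduce the two-dimensional estimate entirely to the one-dimensional Theorem~\ref{thk2_1D} by exploiting the tensor-product structure already exposed in \eqref{EF2DHQ}. The starting point is the identity
\begin{equation*}
\bar c^{n+1}_{jl}= \frac{\mu_x}{\mu}\sum_{\sigma=1}^L \omega^{\sigma}H_1(y_l^\sigma) + \frac{\mu_y}{\mu}\sum_{\sigma=1}^L \omega^{\sigma}H_2(x_j^\sigma),
\end{equation*}
in which the coefficients $\mu_x/\mu$, $\mu_y/\mu$ form a convex combination (since $\mu=\mu_x+\mu_y$) and the Gauss weights satisfy $\omega^\sigma>0$ with $\sum_\sigma\omega^\sigma=1$. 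Thus it suffices to show that each slice value $H_1(y_l^\sigma)$ and $H_2(x_j^\sigma)$ is nonnegative, and that at least one of them is strictly positive.

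First I would observe that, for a fixed Gauss node $y=y_l^\sigma$, the quantity $H_1(y_l^\sigma)=\langle g_h^n\rangle_j(y_l^\sigma)+\mu\,\Delta x\,\{M_{ih}^n\}\widehat{\partial_x g_h^n}\big|_{x_{j-1/2}}^{x_{j+1/2}}$ is \emph{exactly} the one-dimensional weighted cell-average update of Theorem~\ref{thk2_1D}, now read in the $x$-variable alone: the weight is the piecewise smooth function $M(x):=M_{ih}^n(x,y_l^\sigma)$, the mesh size is $\Delta x$, the slice $g_h^n(\cdot,y_l^\sigma)$ is a one-variable polynomial of degree $k=2$, and the effective mesh ratio is $\mu=\mu_x+\mu_y$, matching the prefactor $\mu\,\Delta x$. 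Applying the three-point decomposition behind Theorem~\ref{thk2_1D} (Lemma~3.3 of \cite{LY14b}, with the admissible $\gamma^x=\gamma^x(y_l^\sigma)\in(a_j,b_j)$) then writes $H_1(y_l^\sigma)$ as a combination of the values of $g_h^n$ at the three points $x_j+\tfrac{\Delta x}{2}\{-1,\gamma^x,1\}$, i.e.\ at the points of $\hat S_j^x\otimes S_l^y\subset S_{jl}$, with strictly positive coefficients provided $\mu$ obeys the one-dimensional CFL bound for the weight $M(\cdot,y_l^\sigma)$. The symmetric statement holds for $H_2(x_j^\sigma)$, whose evaluation points lie in $S_j^x\otimes\hat S_l^y\subset S_{jl}$ with $\gamma^y=\gamma^y(x_j^\sigma)\in(a_l,b_l)$. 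Since by hypothesis $g_h^n\geq0$ on all of $S_{jl}$, every $H_1(y_l^\sigma)$ and $H_2(x_j^\sigma)$ is nonnegative, whence $\bar c^{n+1}_{jl}\geq0$.

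To upgrade this to strict positivity I would invoke $\bar c^n_{jl}>0$. Reading the cell average through the same quadrature, $\bar c^n_{jl}=\tfrac{\mu_x}{\mu}\sum_\sigma\omega^\sigma\langle g_h^n\rangle_j(y_l^\sigma)+\tfrac{\mu_y}{\mu}\sum_\sigma\omega^\sigma\langle g_h^n\rangle_l(x_j^\sigma)$, and each line average $\langle g_h^n\rangle_j(y_l^\sigma)$, $\langle g_h^n\rangle_l(x_j^\sigma)$ is itself the positive three-point combination of values of $g_h^n$ at points of $S_{jl}$. Hence $\bar c^n_{jl}$ is a nonnegative combination of $\{g_h^n(s):s\in S_{jl}\}$; since this sum is strictly positive while each summand is nonnegative, $g_h^n(s_\star)>0$ for at least one $s_\star\in S_{jl}$. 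That same value enters the decomposition of $\bar c^{n+1}_{jl}$ with a strictly positive coefficient under the strict CFL condition $\mu<\mu_0$, so $\bar c^{n+1}_{jl}>0$. Collecting the per-slice one-dimensional CFL bounds and minimizing over both directions, all Gauss nodes $y_l^\sigma,x_j^\sigma$, and all cells yields the threshold $\mu_0$ of \eqref{lambda02D}.

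The main obstacle is bookkeeping rather than a new idea: I must verify that the two families of one-dimensional test sets assemble \emph{exactly} into $S_{jl}$ even though the interior node $\gamma^x$ (respectively $\gamma^y$) changes from one Gauss line to the next, since $a_j,b_j,\gamma^x$ depend on the weight $M(\cdot,y_l^\sigma)$ and $a_l,b_l,\gamma^y$ on $M(x_j^\sigma,\cdot)$. The second delicate point is the strict-positivity step: the hypothesis controls only the two-dimensional cell average $\bar c^n_{jl}$, so one must read $\bar c^n_{jl}$ consistently through the same Gauss quadrature used to define $\bar c^{n+1}_{jl}$ in \eqref{EF2DHQ} in order to express it as a genuine nonnegative combination of the admissible point values; this is what legitimately transfers $\bar c^n_{jl}>0$ into a strictly positive contribution to $\bar c^{n+1}_{jl}$. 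Once these two points are in place, the conclusion follows at once from the positive combination structure.
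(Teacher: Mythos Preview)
Your argument follows exactly the paper's route: exploit the convex-combination structure of \eqref{EF2DHQ}, apply the one-dimensional Theorem~\ref{thk2_1D} slice by slice to each $H_1(y_l^\sigma)$ and $H_2(x_j^\sigma)$, and set $\mu_0=\min\{\mu_0^x,\mu_0^y\}$. Your treatment of the strict inequality is in fact more careful than the paper's (which simply asserts $H_1>0$ via Theorem~\ref{thk2_1D} without checking the slice hypothesis $\langle g_h^n\rangle_j(y_l^\sigma)>0$); note, however, that the displayed identity expressing $\bar c^n_{jl}$ as a Gauss sum of line averages is not exact, since the weight $M_{ih}^n=e^{-q_i\psi_h^n}$ is not polynomial---a cleaner fix is to observe that a $P^2$ polynomial that is $\geq 0$ and vanishes at all three test points on each Gauss line must vanish identically on $I_{jl}$, so $\bar c^n_{jl}>0$ forces $g_h^n>0$ somewhere on $S_{jl}$.
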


\begin{proof}
It is easy to check that  $\bar c^{n+1}_{jl} $ in (\ref{EF2DHQ}) is a convex combination of  $H_1(y_l^\sigma)$ and $H_2(x_j^\sigma)$ for $\sigma=1, \cdots, L$;  hence
$\bar c^{n+1}_{jl} > 0 $ if
$$
H_1(y_l^\sigma)> 0 \quad \text{and}\quad  H_2(x_j^\sigma)> 0, \quad \sigma=1, \cdots, L.
$$
Applying the one-dimensional result obtained in Theorem \ref{thk2_1D} to $H_1(y_l^\sigma)$,  we obtain that for each quadrature point $y\in S_l^y$,   $H_1(y)> 0$ if $g_h^n(x, y) \geq 0$
on the test set $\hat S_j^x$ and
$
\mu \leq \mu_0^x
$
with
\begin{align*}
\mu_0^x=\min_{jl}\min_{1\leq \sigma \leq L}\left\{
\frac{\langle\pm\gamma^x\mp\xi(1\pm\gamma^x)+\xi^2\rangle_j(y_l^\sigma)}{2(1\pm\gamma^x)\left(\alpha_3(\mp\gamma^x)M(x_{j-\frac12}, y_l^{\sigma}) + \alpha_1(\pm\gamma^x)M(x_{j+\frac12}, y_l^{\sigma})\right)} \right., \\
\quad  \left. \frac{\langle 1 -\xi^2\rangle_j(y_l^{\sigma})}{2(1-4\beta_1)\left[M(x_{j-\frac12},y_l^{\sigma})+M(x_{j+\frac12},y_l^{\sigma})\right]}\right\}.
\end{align*}
In an entirely similar manner, we obtain that for each quadrature point $x\in S_j^x$,
 $H_2(x)> 0$ if $g_h^n(x, y) \geq 0 $ on the test set $\hat S_l^y$ and
$
\mu \leq \mu_0^y
$
with
\begin{align*}
\mu_0^y=
\min_{jl}\min_{1\leq \sigma \leq L}\left\{
\frac{\langle\pm\gamma^y\mp\eta(1\pm\gamma^y)+\eta^2\rangle_l(x_j^\sigma)}{2(1\pm\gamma^y)\left(\alpha_3(\mp\gamma^y)M(x_j^{\sigma},y_{l-\frac12}) + \alpha_1(\pm\gamma^y)M(x_j^{\sigma},y_{l+\frac12})\right)}\right., \\
\quad \left. \frac{\langle 1 -\eta^2\rangle_l(x_j^{\sigma})}{2(1-4\beta_1)\left[M(x_j^{\sigma},y_{l-\frac12})+M(x_j^{\sigma}, y_{l+\frac12})\right]}\right\}.
\end{align*}
The proof is thus complete if we take
\begin{equation}\label{lambda02D}
\mu_0=\min\{\mu_0^x, \mu_0^y\}.
\end{equation}
\end{proof}
\subsection{Limiter}
To enforce the condition in Theorem \ref{th2dk2}, we can use the following scaling limiter similar to the 1D case. For all $j$ and $l$, assuming the cell averages $\bar w_{jl}>0 $.
We use the modified polynomial $\tilde w_{h}$ instead of $g_{ih}(x, y)$ on $I_{jl}$,
\begin{equation*}%\label{recon2D}
\tilde{w}_{h}(x, y) = \theta \left(w_h(x, y)-\bar{w}_{jl}\right) + \bar{w}_{jl}, \text{ where }
\theta = \min\left\{1,  \frac{\bar{w}_{jl}}{\bar{w}_{jl} -\min_{ S_{jl}} w_h(x, y) } \right\}.
\end{equation*}
Similar implementation algorithm as in Section \ref{dgalg} applies in the 2D setting. 

\section{Numerical Examples}
In this section, we present  a selected set of examples to numerically validate our positivity-preserving DDG schemes.  

In the one dimensional case, for numerical approximation $u_h$ we quantify $l_1$ errors by
$$
\|u_h-u_{ref}\|_{l_1}= \sum_{j=1}^N \int_{I_j}|u_h(x) - u_{ref}(x)|dx,
$$
where each integral is evaluated by a $4$-point Gaussian quadrature method. Here $u_{ref}$ is either the exact solution in Example 1 or the fine-meshed reference solution in Example 2. 
Long time simulation is also performed to illustrate how the positivity of cell averages propagates. % when using  proper choices of $(\beta_0, \beta_1)$.
%\subsection{One-dimensional example} 

{\bf \noindent Example 1.} We consider a modified PNP so that an exact solution is available. This will help us to test numerical convergence and solution positivity.
 \label{ex1}
In $\Omega=[0,1]$, we consider
\begin{align*}
  \partial_t c_i & = \partial_x (\partial_x c_i+  q_i c_i \partial_x \psi) +f_i,\quad i=1,2 \\
- \partial_x^2 \psi & =  q_1c_1+ q_2c_2,\\
 c_1 & =x^2(1-x)^2, \quad c_2  = x^2(1-x)^3, \\
{\partial_x c_i} &+q_ic_i {\partial_x \psi}=0,  \quad x=0, 1,\\%\notag\\
  \psi(t,0)& =0,  \quad \partial_x \psi(t, 1)=-e^{-t}/60, 
\end{align*}
with
\begin{align*}
 f_1 &=\frac{(50x^9-198x^8+292x^7-189x^6+45x^5)}{30e^{2t}}+\frac{ (-x^4+2x^3-13x^2+12x-2)}{e^{t}},   \\
 f_2&=\frac{(x - 1)(110x^9 - 430x^8 + 623x^7 - 393x^6+90x^5)}{60e^{2t}} + \frac{(x-1)(x^4 - 2x^3 + 21x^2 - 16x + 2)}{e^{t}}.
\end{align*}
This system, with $q_1=1$ and $q_2=-1$,  admits exact solutions
\begin{align*}
  c_1 & =x^2(1-x)^2e^{-t}, \\
 c_2  &= x^2(1-x)^3e^{-t}, \\
\psi & =  -(10x^7-28x^6+21x^5)e^{-t}/420.
\end{align*}
Table \ref{tab:ex1}--\ref{tab:ex1LTsbeta1} display both the $l_1$ errors and orders of convergence at $T=0.01$ and $0.1$ with different pairs of parameters $(\beta_0,\beta_1)$. We observe that the order of convergence is roughly of $3$ in all cases
when using $P^{2}$ elements. Note that though the pair $(\beta_0, \beta_1)=(4,1/24)$ lies outside the range in Theorem \ref{thk2_1D}, we still observe the optimal order of accuracy in Table 2 and 4.

%Note despite of the source terms $f_1$ and $f_2$, we still can find  pairs of parameters $(\beta_0, \beta_1)=(4,1/6)$ or $(\beta_0, \beta_1)=(4,1/24)$ with witch algorithm gives third order accurate approximation solutions. 

 Figure \ref{fig:ex1} shows the numerical solutions at different times. We observe that the numerical solutions (dots) match the exact solutions (solid line) at $t=0.1$ (top) and $t=1$ (bottom) very well. Our  simulation also indicates that cell averages of our numerical solutions are positive at least until $T=15$, at which time the maximum  cell average is extremely small at the level of $10^{-8}$. This simulation shows that our scheme preserves positivity in long time.

 %In the presence of  the source terms $f_1$ and $f_2$,  we are not studying  mass conservation or free energy decay in this example.

\begin{table}[!htb]
\caption{Error table of Example 1 at $t=0.01$ for $k=2$, $\beta_0=4$ and $\beta_1=\frac{1}{6}$}
\begin{tabular}{ |c|c|c|c|c|c|c| }
\hline
  h & $c_1$ error & order & $c_2$ error & order & $\psi$ error & order \\ \hline
  0.2     &    2.0624e-04  &          --   &  1.3872e-04 &        --   &8.1931e-05   &       --            \\ \hline
  0.1     &    5.4529e-05   &    2.43 &  3.5665e-05 &   2.43 &8.7833e-06  &  3.17     \\ \hline
  0.05   &   9.1862e-06    &   2.69  & 6.1522e-06  &  2.66  &  9.6769e-07 &    3.15   \\ \hline
  0.025 &  1.3082e-06     &  2.81   &   8.9366e-07 &   2.78 & 1.1148e-07 &   3.12   \\ \hline
\end{tabular}
\label{tab:ex1}
\end{table}

\begin{table}[!htb]
\caption{Error table of Example 1 at $t=0.01$ for $k=2$, $\beta_0=4$ and $\beta_1=\frac{1}{24}$}
\begin{tabular}{ |c|c|c|c|c|c|c| }
\hline
  h & $c_1$ error & order & $c_2$ error & order & $\psi$ error & order \\ \hline
  0.2     &   1.0164e-04 &          --  & 8.4562e-05  &        --     & 7.1174e-05  &         --       \\ \hline
  0.1     &   8.4066e-06 &    3.41& 7.8862e-06  &   3.44&  7.4710e-06    &  3.18  \\ \hline
  0.05   &   7.8352e-07 &    3.31& 6.7092e-07  &   3.45& 8.2247e-07   &    3.15   \\ \hline
  0.025 &   8.5408e-08 &    3.20& 6.5765e-08  &   3.35& 9.5078e-08   &   3.11  \\ \hline
\end{tabular}
\label{tab:ex1sbeta1}
\end{table}

\begin{table}[!htb]
\caption{Error table of Example 1 at $t=0.1$ for $k=2$, $\beta_0=4$ and $\beta_1=\frac{1}{6}$ }
\begin{tabular}{ |c|c|c|c|c|c|c| }
\hline
  h & $c_1$ error & order & $c_2$ error & order & $\psi$ error & order \\ \hline
 0.2  &       2.6995e-04  &         --    & 1.7356e-04  &          --   &7.5599e-05    &       -- \\ \hline
 0.1  &      6.4304e-05    &  2.51  &3.9933e-05   &   2.50  & 8.3213e-06  &   3.15  \\ \hline
 0.05  &     1.0421e-05   &   2.73 & 6.6252e-06  &    2.70 &  9.3043e-07 &    3.14    \\ \hline
 0.025 &    1.4658e-06   &   2.83 & 9.4952e-07  &    2.80 &  1.0777e-07 &      3.11   \\ \hline
\end{tabular}
\label{tab:ex1LT}
\end{table}

\begin{table}[!htb]
\caption{Error table of Example 1 at $t=0.1$ for $k=2$, $\beta_0=4$ and $\beta_1=\frac{1}{24}$ }
\begin{tabular}{ |c|c|c|c|c|c|c| }
\hline
  h & $c_1$ error & order & $c_2$ error & order & $\psi$ error & order \\ \hline
 0.2  &     9.3406e-05 &         --   &   8.1835e-05&           --  & 6.3855e-05 &          -- \\ \hline
 0.1  &      7.7940e-06  &   3.31 & 7.5466e-06  &    3.42 & 6.7668e-06 &    3.18  \\ \hline
 0.05  &   7.4802e-07 &    3.18&  6.6124e-07 &     3.41&  7.4491e-07&     3.15  \\ \hline
 0.025 &   9.4980e-08  &   2.98 &  6.7140e-08   &   3.30  & 8.5650e-08   &  3.12
    \\ \hline
\end{tabular}
\label{tab:ex1LTsbeta1}
\end{table}

\begin{figure}[!htb]
\caption{Numerical solutions versus exact solution at $t=0.1$ and $t=1$} \label{fig:ex1}
\centering
\begin{tabular}{cc}

\includegraphics[width=\textwidth,height=0.2\textheight]{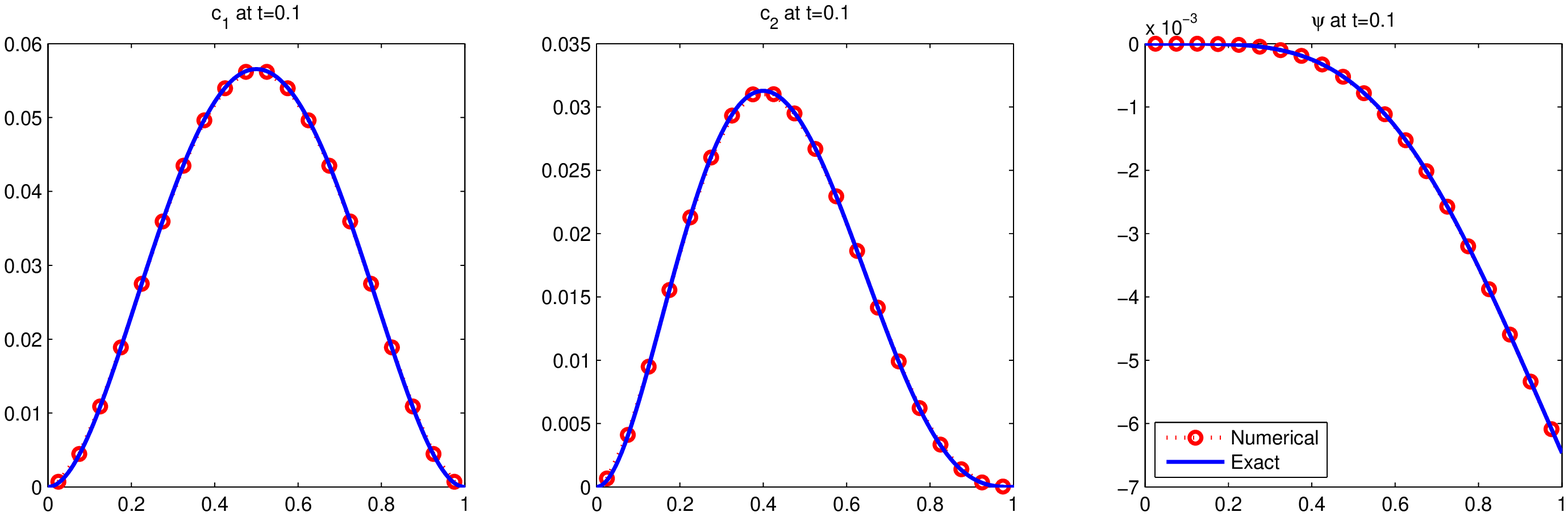}\\
\includegraphics[width=\textwidth,height=0.2\textheight]{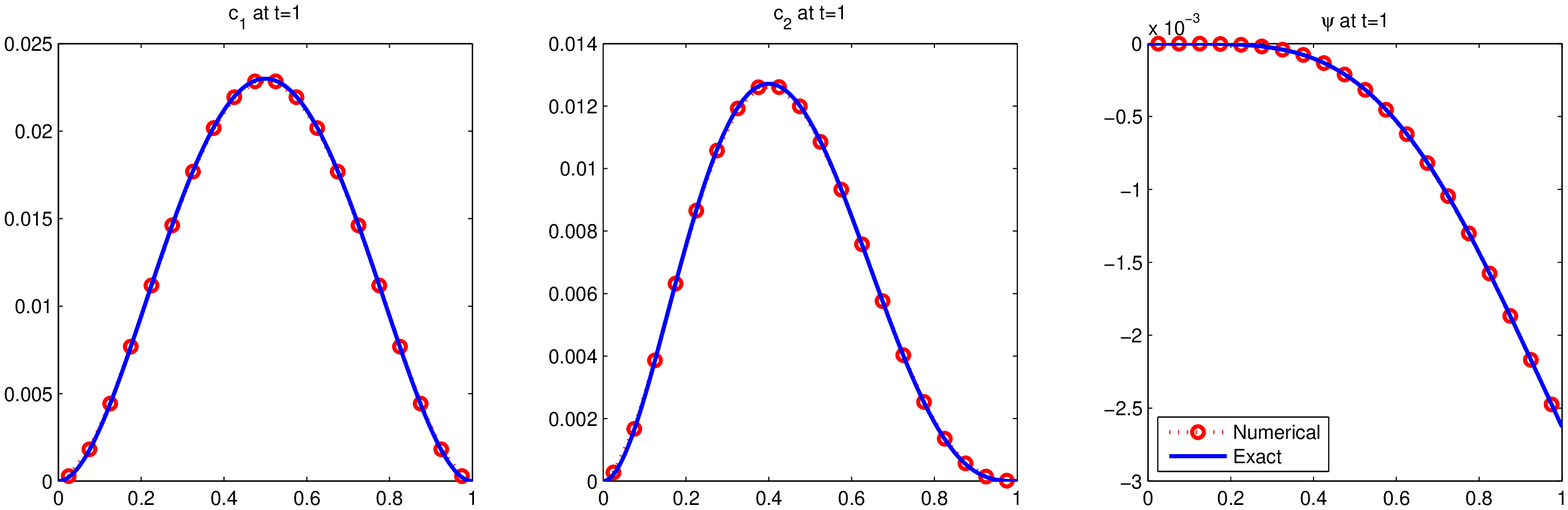} %\\
\end{tabular}
{Solid line: exact solutions; circles: numerical solutions}

\end{figure}.

{\bf \noindent Example 2.} In this example we solve the PNP system \eqref{PNP} with $m=2$ to show convergence, mass conservation and free energy dissipation of numerical solutions.  Defined in the domain $[0,1]$, the problem is given by 
\begin{align*}
  & \partial_t c_i =\partial_x (\partial_x c_i+ q_ic_i\partial_x \psi), \quad i=1, 2 \\ % %\label{pnp1}\\
& - \partial_x^2\psi= q_1c_1+q_2c_2, \\ % \label{pnp2}\\
    & c_1^{\rm in}(x)=1+\pi\sin(\pi x), \quad c_2^{\rm in}(x)= 4-2x,\\
   & \frac{\partial c_i}{\partial  \textbf{n}}+q_ic_i \frac{\partial \psi}{\partial  \textbf{n}} =0 ,\quad x=0,1,\\
  &  {\partial_x \psi}(t,0)=0,\quad {\partial_x \psi}(t,1)=0,
\end{align*}
where $q_1$ and $q_2$ are set to be $1$ and $-1$, respectively.

With parameter pair $(\beta_0,\beta_1)=(4,1/6)$, which is admissible according to Theorem \ref{thk2_1D}, we observe the third order of accuracy in Table \ref{tab:ex2} and \ref{tab:ex2LT} at time $t=0.01$ and $t=0.1$, respectively. 
%This confirms Theorem \ref{thk2_1D} for the standard PNP equations.

%With zero flux for concentration $c_1$ and $c_2$, this model example is also to test the conservation of total mass and free energy decay. 
In Figure \ref{fig:ex2} (top), we see the snapshots of $c_1$, $c_2$ and $\psi$ at $t=0, 0.01, 0.1,0.8, 1$. We observe that the solutions at $t=0.8$ and $t=1$ are indistinguishable. The solution appears to be approaching to the steady state, $c_1=3$, $c_2=3$ and $\psi=0$. Figure \ref{fig:ex2} (bottom) shows the energy decay (see the change on the right vertical axis)  and conservation of mass (see the left vertical axis). We see that the total mass of $c_1$ and $c_2$ stays constant while the free energy is decreasing in time. In fact the free energy levels off after $t=0.2$, at which the system is already almost in steady state. %\red{Note that the left y-limits are all $3$, which shows the total mass is exactly conserved.}

\begin{table}[!htb]
\caption{Error table of Example 2 at $t=0.01$ for $k=2$, $\beta_0=4$ and $\beta_1=\frac{1}{6}$ }
\begin{tabular}{ |c|c|c|c|c|c|c| }
\hline
  h & $c_1$ error & order & $c_2$ error & order & $\psi$ error & order \\ \hline
 0.2  &      1.4182e-02 &        -- &   2.4784e-03  &        --&   1.5565e-02  &      --     \\ \hline
 0.1  &    1.8817e-03 &    2.96 & 3.3395e-04  &    2.94&  2.0471e-03  &   2.97      \\ \hline
 0.05 &    2.3522e-04 &    3.00 & 4.2034e-05  &      2.99& 2.5493e-04  &  3.01     \\ \hline
\end{tabular}
\label{tab:ex2}
\end{table}

\begin{table}[!htb]
\caption{Error table of Example 2 at $t=0.1$ for $k=2$, $\beta_0=4$ and $\beta_1=\frac{1}{6}$ }
\begin{tabular}{ |c|c|c|c|c|c|c| }
\hline
  h & $c_1$ error & order & $c_2$ error & order & $\psi$ error & order \\ \hline
 0.2  &    5.3188e-04 &         --&  5.5476e-04&         --&  5.9369e-04&         -   \\ \hline
 0.1  &    5.2491e-05 &       3.29& 7.0056e-05&     2.99& 5.8189e-05&     3.30   \\ \hline
 0.05 &    5.5591e-06 &     3.24& 8.7443e-06&     3.00& 6.1212e-06&     3.25  \\ \hline
\end{tabular}
\label{tab:ex2LT}
\end{table}

\begin{figure}[!htb]
\caption{Temporal evolution of the solutions}
\centering
\begin{tabular}{cc}
\includegraphics[width=1.05\textwidth]{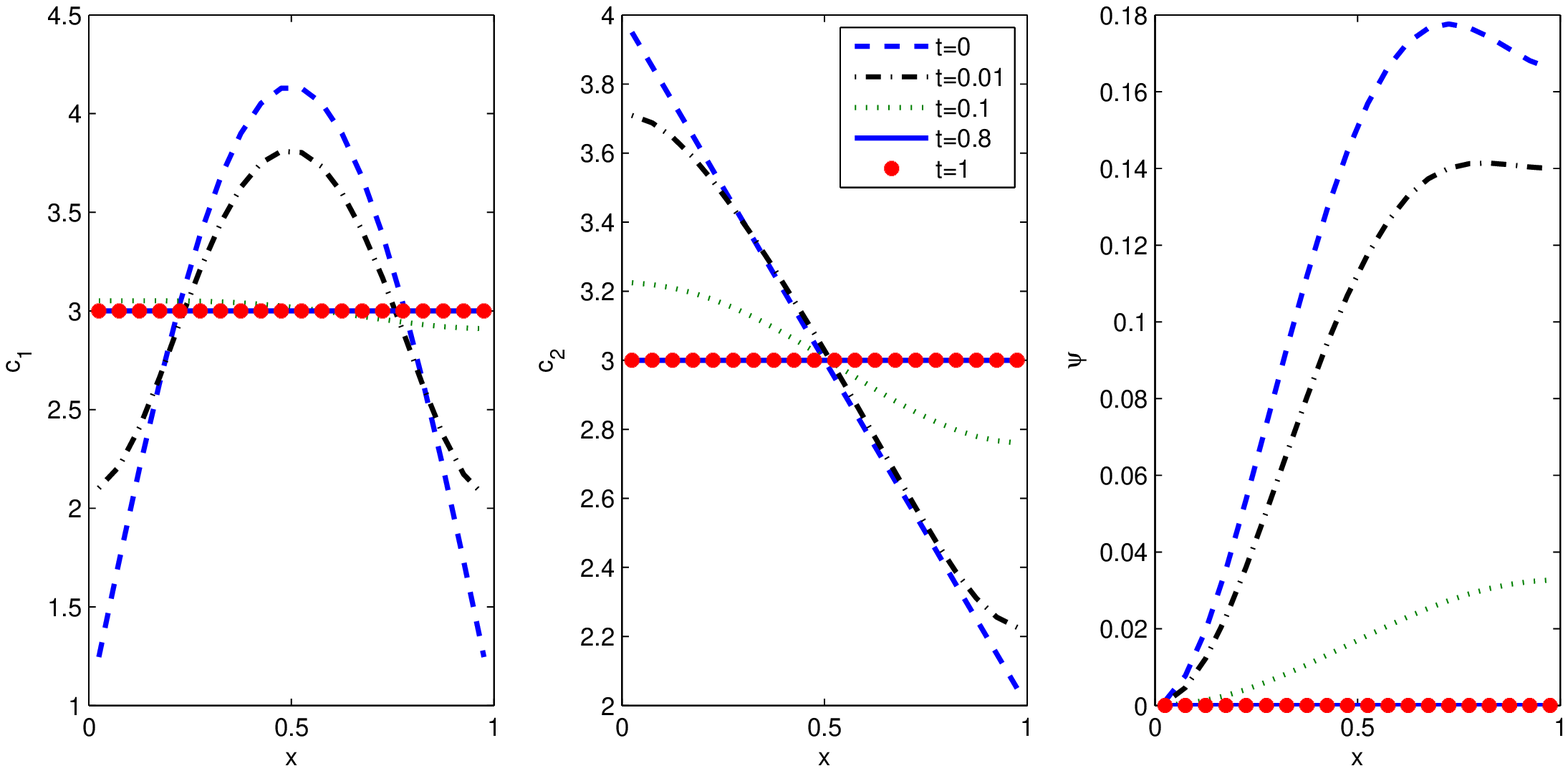}\\
\includegraphics[width=\textwidth]{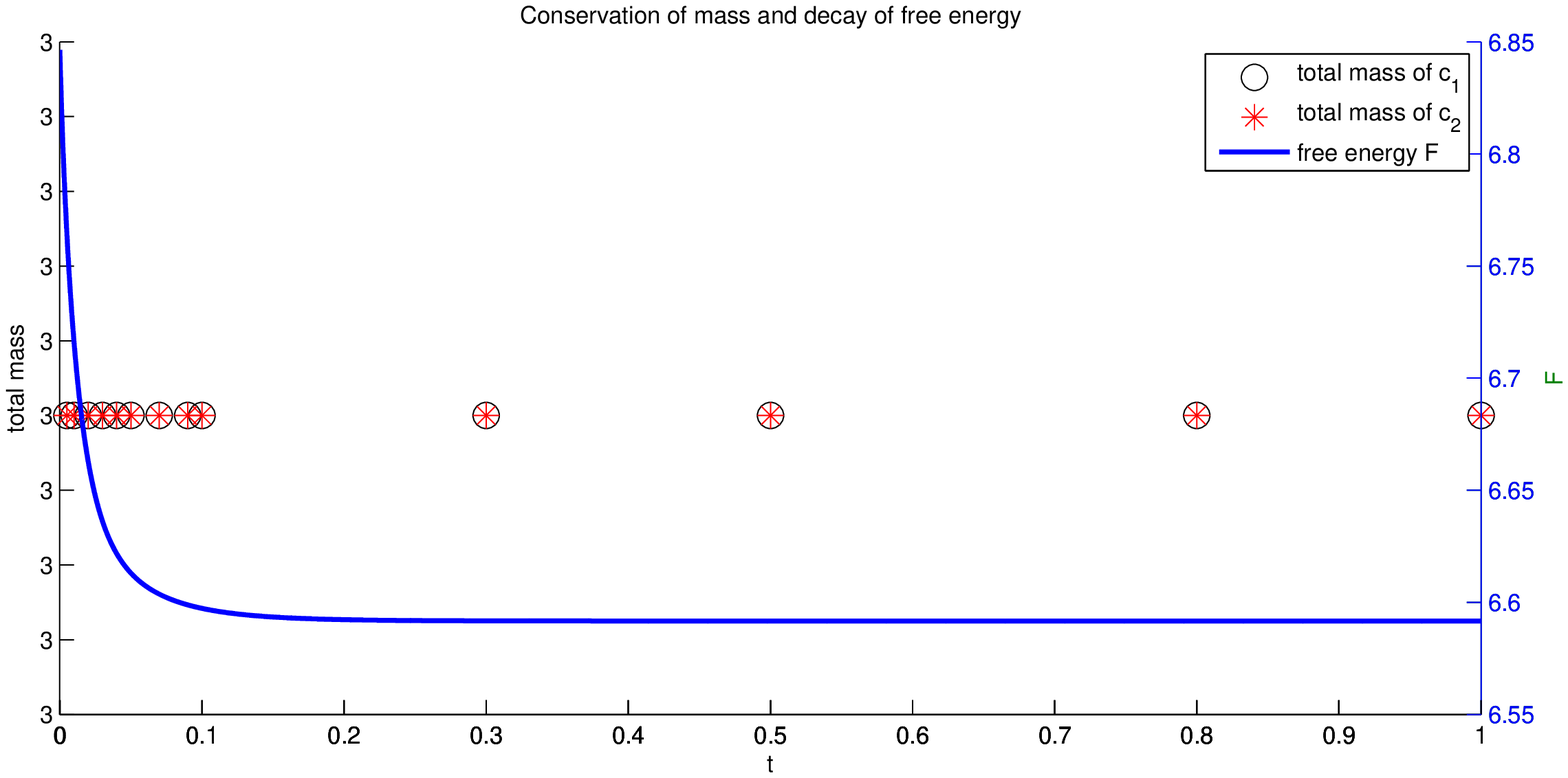} %\\
%{\small Sub-critical mass $M=1$}& {\small Super-critical mass $M=10$}
\end{tabular}
\label{fig:ex2}
\end{figure}

%\subsection{Two-dimensional example} We now test our scheme in a 2D setting.

\noindent{\bf  Example 3.} This example is to test the spatial accuracy of our scheme in a 2D setting. Similar to the Numerical Test 5.1 in \cite{DWZ20}, we consider the PNP problem \eqref{PNP} on $\Omega=[0,\pi]^2$ with source terms, i.e., 
\begin{align*}
&\partial_t c_1= \nabla\cdot(\nabla c_1+c_1\nabla\psi)+f_1,\\
&\partial_t c_2= \nabla\cdot(\nabla c_2-c_2\nabla\psi)+f_2,\\
&-\Delta \psi =  c_1-c_2 + f_3.
\end{align*}
where the functions $f_i(t,x,y)$ are determined by the following exact solution
\begin{align*}
& c_1(t,x,y) = \alpha_1 \left( e^{-\alpha t} \cos(x)\cos(y) + 1\right), \\
& c_2(t,x,y)= \alpha_2 \left( e^{-\alpha t} \cos(x)\cos(y) + 1\right), \\
& \psi(t,x,y) = \alpha_3 e^{-\alpha t} \cos(x)\cos(y),
\end{align*}
where the parameters $\alpha, \alpha_1, \alpha_2$ and $\alpha_3$ will be specified in each test case.
The initial conditions in (\ref{PNP}c) are obtained by evaluating the exact solution at $t=0$, and the boundary conditions in (\ref{PNP}c) satisfy the zero flux boundary conditions. The boundary data in (\ref{PNP}d) is obtained by evaluating the exact solution $\psi(t,x,y)$ on $\partial \Omega_D$ and its normal derivative $\frac{\partial \psi}{\partial  \textbf{n}}$ on $\partial \Omega_N$. 
In each test case, we consider one of the following two different types of boundary conditions: (i) $\partial \Omega_D = \partial \Omega$, namely, $\psi$ is subject to Dirichlet boundary conditions on $ \partial \Omega$; (ii) $\partial \Omega_D=\{(x,y)\in \bar{\Omega}: x=0, x=\pi \}$ and $\partial \Omega_N=\partial \Omega \backslash \partial \Omega_D$.

The problem is solved by the fully discrete DG scheme (\ref{fully2D}) together with (\ref{dg+}bc),  modified with source  terms. The numerical flux parameters are chosen as $\beta_0=16, \beta_1=\frac{1}{6}$. The mesh ratio $\frac{\Delta t}{h^2} = 1.6\times 10^{-5}$ is taken for all test cases for it is sufficient to ensure the desired spatial accuracy.\\

{\bf \noindent Test case 3-1.} We take the parameters in the exact solution as $\alpha=\alpha_1=\alpha_2=\alpha_3=10^{-3}$. In (\ref{PNP}d), we set the boundary condition type for $\psi$ as (i).
The errors and orders of convergence at $t=0.001$ and $t=0.01$ are reported in Table \ref{tab:ex3Dacc1} and Table \ref{tab:ex3Dacc2}, respectively. From these results, we conclude that the scheme is of third order in space.

\begin{table}[!htb]
\caption{$l^1$ errors and orders at $t=0.001$ with meshes $N\times N$. }
\begin{tabular}{ |c|c|c|c|c|c|c| }
\hline
 $N$ & $c_1$ error & order & $c_2$ error & order & $\psi$ error & order \\ \hline
 10  &      2.1205e-06  &         --    & 2.1205e-06  &          --   & 2.4658e-06    &       -- \\ \hline
 20  &    2.7552e-07    &  2.94  & 2.7552e-07   &   2.94  & 2.8821e-07  &   3.10  \\ \hline
 30  &    8.1374e-08   &  3.01 & 8.1374e-08  &  3.01 & 8.3538e-08 &  3.05    \\ \hline
40 &   3.4254e-08   &  3.00 & 3.4254e-08  &  3.01 &  3.4884e-08 &   3.04   \\ \hline
\end{tabular}
\label{tab:ex3Dacc1}
\end{table}

\begin{table}[!htb]
\caption{$l^1$ errors and orders at $t=0.01$ with meshes $N\times N$. }
\begin{tabular}{ |c|c|c|c|c|c|c| }
\hline
 $N$ & $c_1$ error & order & $c_2$ error & order & $\psi$ error & order \\ \hline
 10  &   2.22520e-06  &         --    & 2.22520e-06  &          --   & 2.46582e-06    &       -- \\ \hline
 20  &    2.75811e-07   &  3.01  & 2.75812e-07   &   3.01  & 2.88210e-07  &   3.10  \\ \hline
 30  &   8.13896e-08   &  3.01 & 8.13899e-08  &  3.01 & 8.35368e-08 &  3.05    \\ \hline
40 &   3.42590e-08   &  3.01 & 3.42591e-08  &   3.01 &  3.48838e-08 &   3.04   \\ \hline
\end{tabular}
\label{tab:ex3Dacc2}
\end{table}

{\bf \noindent Test case 3-2.} We still take  $\alpha=\alpha_1=\alpha_2=\alpha_3=10^{-3}$ in the exact solution and set the boundary type for $\psi$ as (ii). The errors and orders of convergence at $t=0.001$ and $t=0.01$ are reported in Table \ref{tab:ex3Dacc3} and Table \ref{tab:ex3Dacc4}, respectively. From these results, we find again that the scheme is of $3$rd order in space.

\begin{table}[!htb]
\caption{$l^1$ errors and orders at $t=0.001$ with meshes $N\times N$. }
\begin{tabular}{ |c|c|c|c|c|c|c| }
\hline
 $N$ & $c_1$ error & order & $c_2$ error & order & $\psi$ error & order \\ \hline
 10  &       2.2132e-06  &         --    & 2.2132e-06  &          --   & 2.5216e-06    &       -- \\ \hline
 20  &    2.7552e-07    &  3.01  & 2.7552e-07   &   3.01  & 2.8898e-07  &   3.13  \\ \hline
 30  &    8.1374e-08   &  3.01 & 8.1374e-08  &  3.01 & 8.3559e-08 &  3.06    \\ \hline
40 &   3.4254e-08   &  3.01 & 3.4254e-08  &  3.01 &  3.4875e-08 &   3.04   \\ \hline
\end{tabular}
\label{tab:ex3Dacc3}
\end{table}

\begin{table}[!htb]
\caption{$l^1$ errors and orders at $t=0.01$ with meshes $N\times N$. }
\begin{tabular}{ |c|c|c|c|c|c|c| }
\hline
 $N$ & $c_1$ error & order & $c_2$ error & order & $\psi$ error & order \\ \hline
  10  &       2.22520e-06  &         --    & 2.22520e-06  &          --   & 2.52159e-06    &       -- \\ \hline
 20  &    2.75811e-07    &  3.01  & 2.75812e-07   &   3.01  & 2.88977e-07  &   3.13  \\ \hline
 30  &    8.13897e-08   &  3.01 & 8.13898e-08  &  3.01 & 8.35586e-08 &  3.06    \\ \hline
40 &   3.42590e-08   &  3.01 & 3.42591e-08  &  3.01 &  3.48746e-08 &   3.04   \\ \hline
\end{tabular}
\label{tab:ex3Dacc4}
\end{table}

{\bf \noindent Test case 3-3.} In this test case, we test the spatial accuracy of our scheme for a larger $T=0.1$ with the following parameters $\alpha=\alpha_1=2\alpha_2=\alpha_3=10^{-2}$ in the exact solution and use the boundary type (ii) for $\psi$. The errors and orders of convergence at $t=0.001$, $t=0.01$ and $t=0.1$ are reported in Table \ref{tab:ex3Dacc5}, Table \ref{tab:ex3Dacc6} and Table \ref{tab:ex3Dacc7}, respectively. These results further confirm third order of accuracy in space.\\

\begin{table}[!htb]
\caption{$l^1$ errors and orders at $t=0.001$ with meshes $N\times N$. }
\begin{tabular}{ |c|c|c|c|c|c|c| }
\hline
 $N$ & $c_1$ error & order & $c_2$ error & order & $\psi$ error & order \\ \hline
 10  &      4.2411e-05  &         --    & 2.1207e-05  &          --   & 5.0437e-05    &       -- \\ \hline
 20  &    5.5113e-06    &  2.94  & 2.7558e-06   &   2.94  & 5.7797e-06  &   3.13  \\ \hline
 30  &    1.6277e-06   &  3.01 & 8.1389e-07  &  3.01 & 1.6712e-06 &  3.06    \\ \hline
40 &   6.8519e-07   &  3.01 & 3.4260e-07  &  3.01 &  6.9750e-07 &   3.04   \\ \hline
\end{tabular}
\label{tab:ex3Dacc5}
\end{table}

\begin{table}[!htb]
\caption{$l^1$ errors and orders at $t=0.01$ with meshes $N\times N$. }
\begin{tabular}{ |c|c|c|c|c|c|c| }
\hline
 $N$ & $c_1$ error & order & $c_2$ error & order & $\psi$ error & order \\ \hline
 10  &   4.45250e-05  &         --    & 2.22621e-05  &          --   & 5.04492e-05    &       -- \\ \hline
 20  &   5.51716e-06   &  3.01  & 2.75865e-06   &   3.01  & 5.77974e-06  &   3.13  \\ \hline
 30  &   1.62795e-06   &  3.01 & 8.13998e-07  &  3.01 & 1.67113e-06 &  3.06    \\ \hline
40 &   6.85243e-07   &  3.01 & 3.42630e-07  &   3.01 &  6.97456e-07 &   3.04  \\ \hline
\end{tabular}
\label{tab:ex3Dacc6}
\end{table}

\begin{table}[!htb]
\caption{$l^1$ errors and orders at $t=0.1$ with meshes $N\times N$. }
\begin{tabular}{ |c|c|c|c|c|c|c| }
\hline
 $N$ & $c_1$ error & order & $c_2$ error & order & $\psi$ error & order \\ \hline
 10  &   4.50511e-05  &         --    & 2.25252e-05  &          --   & 5.05368e-05    &       -- \\ \hline
 20  &   5.53926e-06   &  3.02  & 2.76971e-06   &   3.02  & 5.77936e-06  &   3.13  \\ \hline
 30  &   1.63136e-06   &  3.01 &  8.15701e-07  &  3.01 & 1.67036e-06 &  3.06    \\ \hline
40 &   6.86088e-07  &  3.01 & 3.43052e-07  &   3.01 &  6.97023e-07 &   3.04   \\ \hline
\end{tabular}
\label{tab:ex3Dacc7}
\end{table}

{\bf \noindent Test case 3-4.} 
In this test case, we test the spatial accuracy based on various values of $\alpha, \alpha_1, \alpha_2, \alpha_3$ at $T=0.01$ when $\psi$ is of boundary type (i). The $l^1$ errors and orders of convergence at $t=0.01$ with mesh $N\times N$ are shown in Table 
\ref{tab:ex3Dacc8}-\ref{tab:ex3Dacc10}, and the parameters $\alpha, \alpha_1, \alpha_2, \alpha_3$ are specified in each table. All these cases indicate that our scheme is stable in producing solutions of third order of accuracy in space.

\begin{table}[!htb]
\caption{$\alpha=\alpha_1=2\alpha_2=\alpha_3=1$.}
\begin{tabular}{ |c|c|c|c|c|c|c| }
\hline
 $N$ & $c_1$ error & order & $c_2$ error & order & $\psi$ error & order \\ \hline
 10  &      4.70196e-03  &         --    & 1.25563e-03  &          --   & 2.44062e-03    &       -- \\ \hline
 20  &    5.57018e-04    &  3.08  & 1.49662e-04   &   3.07  & 2.85363e-04  &   3.10  \\ \hline
 30  &    1.60997e-04   &  3.06 & 4.35957e-05  &  3.04 & 8.27135e-05 &  3.05    \\ \hline
40 &   6.70705e-05   &  3.04 & 1.82319e-05  &  3.03 &  3.45401e-05  &   3.04   \\ \hline
\end{tabular}
\label{tab:ex3Dacc8}
\end{table}

\begin{table}[!htb]
\caption{$\alpha=\alpha_1=\alpha_2=\alpha_3=1$.}
\begin{tabular}{ |c|c|c|c|c|c|c| }
\hline
 $N$ & $c_1$ error & order & $c_2$ error & order & $\psi$ error & order \\ \hline
 10  &     4.70152e-03  &         --    & 2.51073e-03  &          --   & 2.44014e-03    &       -- \\ \hline
 20  &    5.57010e-04    &  3.08  & 2.99314e-04   &   3.07  & 2.85349e-04  &   3.10  \\ \hline
 30  &    1.60996e-04   &  3.06 & 8.71903e-05  &  3.04 & 8.27119e-05 &  3.05    \\ \hline
40 &   6.70703e-05   &  3.04 & 3.64635e-05  &  3.03 & 3.45398e-05 &   3.04   \\ \hline
\end{tabular}
\label{tab:ex3Dacc9}
\end{table}

\begin{table}[!htb]
\caption{$2\alpha=\alpha_1=\alpha_2=\alpha_3=2$.}
\begin{tabular}{ |c|c|c|c|c|c|c| }
\hline
 $N$ & $c_1$ error & order & $c_2$ error & order & $\psi$ error & order \\ \hline
 10  &   2.91466e-02  &         --    & 7.76994e-03  &          --   & 4.87560e-03   &       -- \\ \hline
 20  &  3.54794e-03    &  3.04  & 9.89917e-04   &   2.97  & 5.70727e-04  &   3.09  \\ \hline
 30  &    1.02805e-03   &  3.06 & 2.92317e-04  &  3.01 & 1.65447e-04 &  3.05    \\ \hline
40 &   4.27907e-04   &  3.05 & 1.22839e-04  &  3.01 &  6.90885e-05  &   3.04   \\ \hline
\end{tabular}
\label{tab:ex3Dacc10}
\end{table}

{\bf \noindent Example 4.} (Solution positivity, conservation of mass and decay of free energy) We test our scheme for the 2D PNP system \eqref{PNP} with $m=2$ on the domain $[0,1]^2$,
\begin{align*}
&\partial_t c_1= \nabla\cdot(\nabla c_1+c_1\nabla\psi),\\
&\partial_t c_2= \nabla\cdot(\nabla c_2-c_2\nabla\psi),\\
&-\Delta \psi =  c_1-c_2,\\
& c_1^{\rm in}(x,y)=\frac{1}{20}\left(\pi\sin(\pi x)+\pi\sin(\pi y)\right), \\
& c_2^{\rm in}(x,y)=x^2(1-x)^2+y^2(1-y)^2,\\
& \frac{\partial c_i}{\partial  \textbf{n}}+q_ic_i \frac{\partial \psi}{\partial  \textbf{n}} =0 ,\quad (x,y) \in \partial \Omega,\\
&\psi =0 \mbox{~~on~} \partial\Omega_D,  \mbox{~~and~} \frac{\partial \psi}{\partial  \textbf{n}}  =0  \mbox{~~on~} \partial\Omega_N, \quad t>0,
\end{align*}
where $\partial \Omega_D=\{(x,y)\in \bar{\Omega}: x=0, x=1 \}$ and $\partial \Omega_N=\partial \Omega \backslash \partial \Omega_D$.
We solve this problem by the fully discrete DG scheme (\ref{fully2D}) with the numerical flux parameters $\beta_0=16, \beta_1=\frac{1}{6}$. \\

{\bf \noindent Test case 4-1.} (Solution positivity)
For $t\in (0,1]$, the smallest cell averages of $c_1, c_2$, and the minimum values of $g_1, g_2$ on set $S_{jl}$ based on mesh $20\times 20$ and time step $\Delta t = 10^{-5}$ are shown in Figure \ref{ex4cg}(a) and \ref{ex4cg}(b), respectively. 
From Figure \ref{ex4cg}, we find that the smallest cell averages of $c_l, c_2$ and the minimum values of $g_1, g_2$ on set $S_{jl}$ are positive when they are away from $t=0$. To have a clear view near $t=0$, we test the problem again for $t\in (0, 10^{-5}]$ based on mesh $40\times 40$ and time step $\Delta t = 10^{-7}$. The smallest cell averages of $c_1, c_2$ are now shown in Figure \ref{ex4avg}, which together with Figure \ref{ex4cg}(a) indicates that our scheme can preserve the positivity of the cell averages of $c_1, c_2$. The minimum values of $g_1, g_2$ before and after using the limiter near $t=0$ are shown in Figure \ref{ex4ming}. 
%Here, instead of using the limiter (\ref{recon2D}) directly, we use the following modified limiter
%\begin{equation*}
%\tilde{q}_h(x, y) = \theta \left(q_h(x, y)-\bar{q}_{ij}\right) + \bar{q}_{ij} \quad \text{ where }
%\theta = \min\left\{1,    \frac{\bar{q}_{ij}-\delta}{\bar{q}_{ij} -\min\limits_{(x, y)\in S_{ij}} q_h(x, y) } \right\},
%\end{equation*}
%where $q=g_{l}, l=1,2$ and the tolerance $\delta = 10^{-10}$, which is the minimum value of $g_1, g_2$ after using the limiter.
From the comparison shown in Figure \ref{ex4ming}, we find the limiter is effective in preserving the positivity of the minimum values of $g_1, g_2$ on set $S_{jl}$, hence, preserving the positivity of the cell averages of $c_1, c_2$. \\

\begin{figure}
\centering
\subfigure[]{\includegraphics[width=0.49\textwidth]{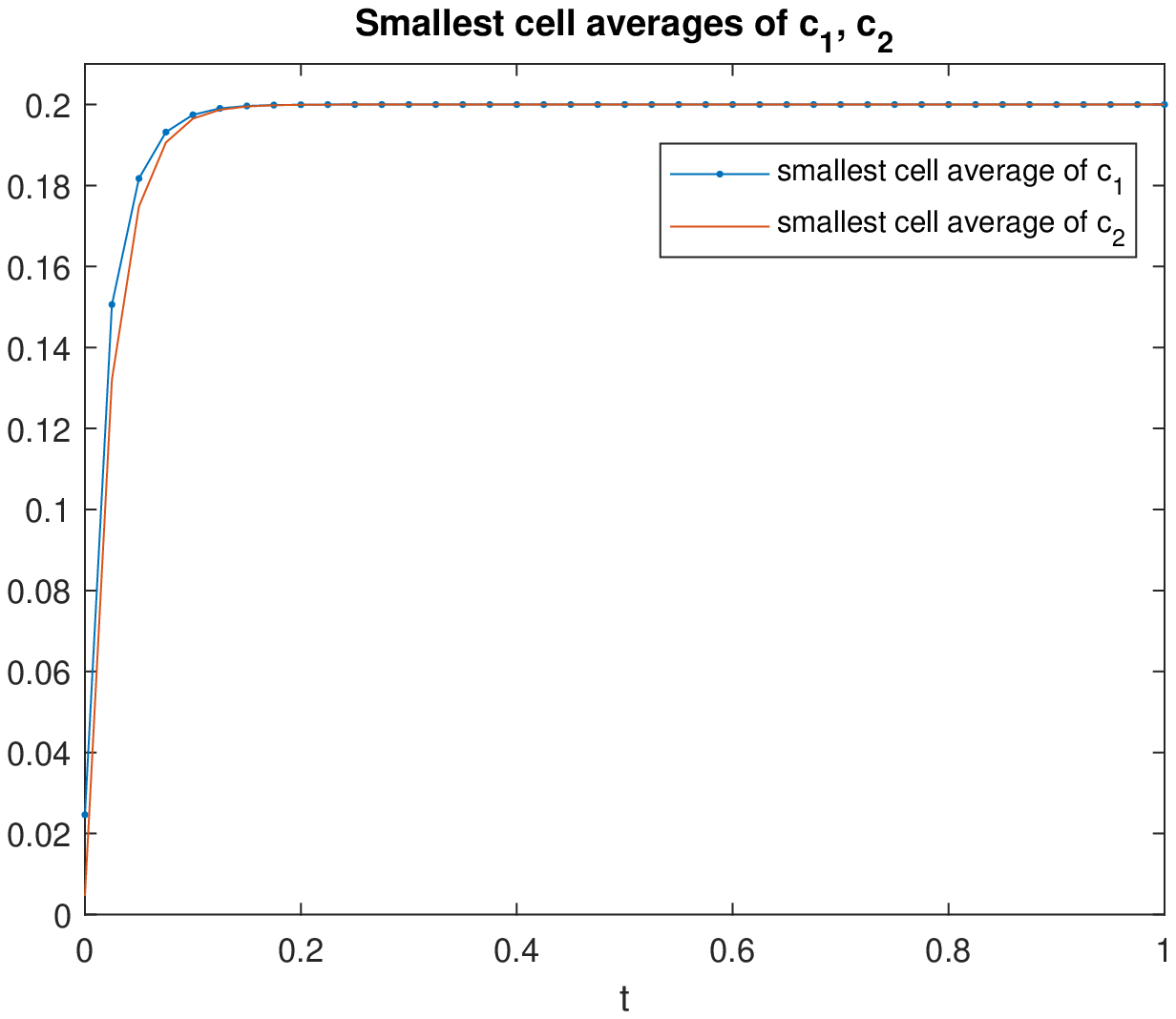}}
\subfigure[]{\includegraphics[width=0.49\textwidth]{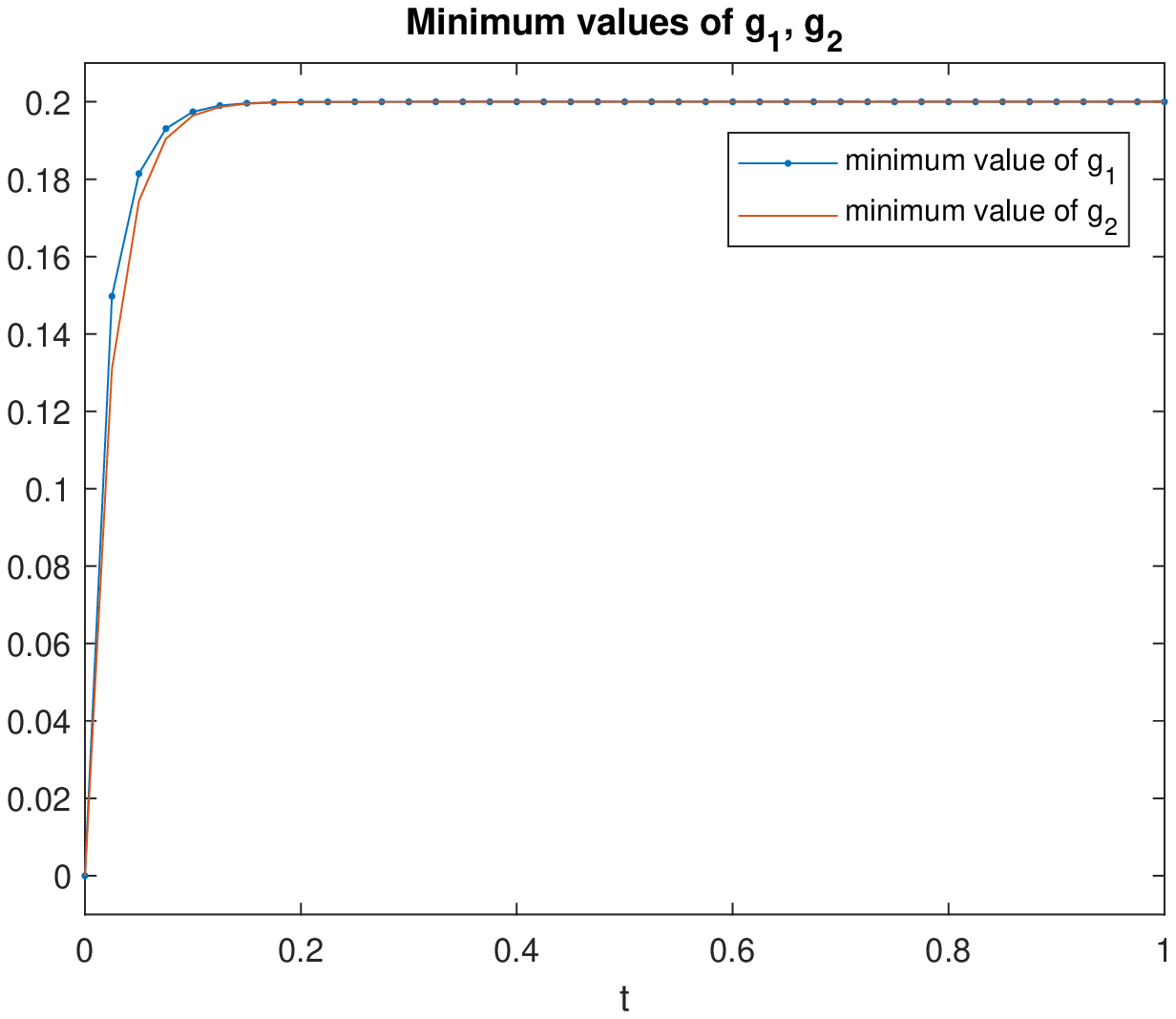}}
\caption{ Smallest cell averages of $c_1, c_2$ and the minimum values of $g_1, g_2$.
  } \label{ex4cg}
 \end{figure}

\begin{figure}
\centering
\subfigure[]{\includegraphics[width=0.49\textwidth]{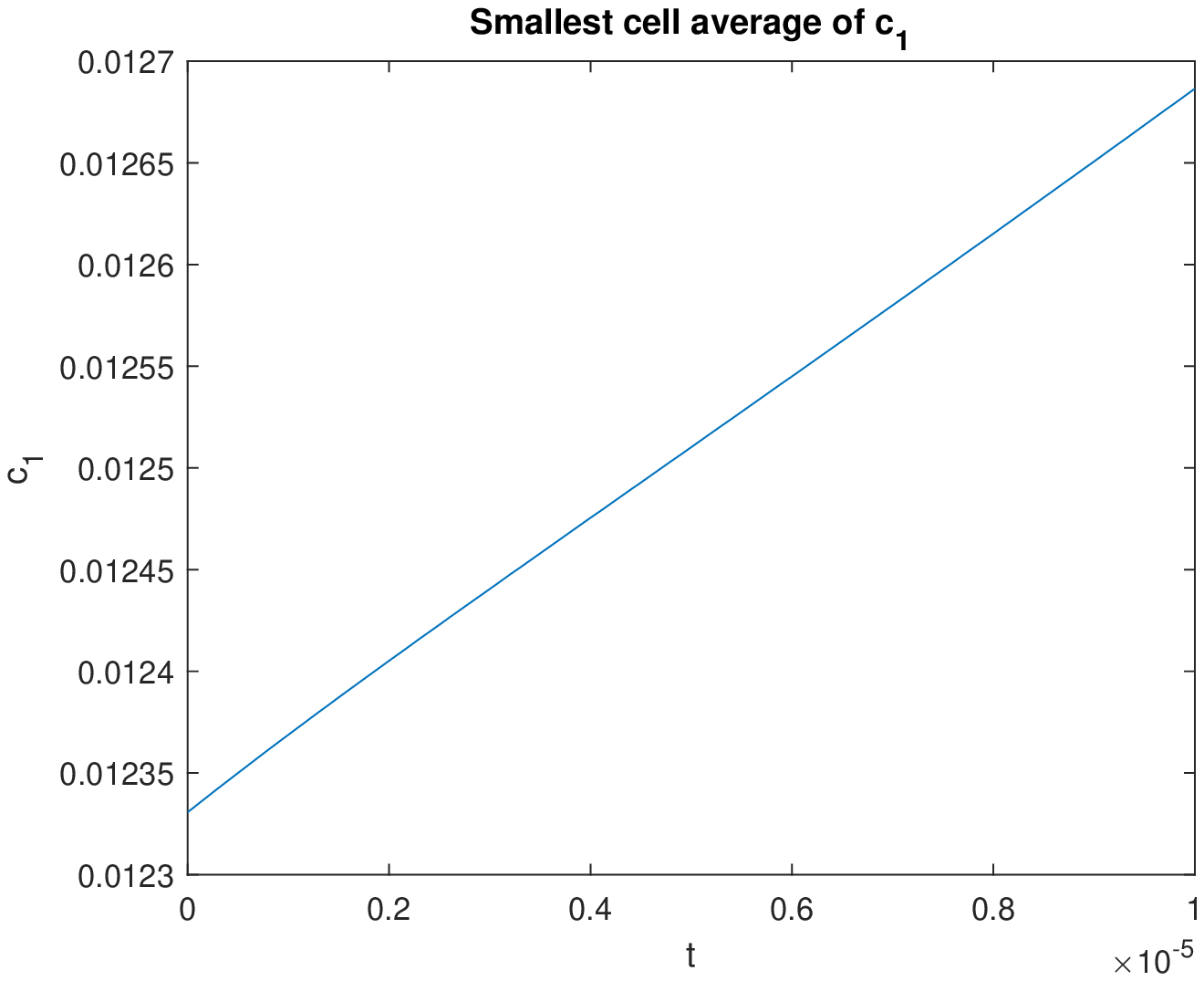}}
\subfigure[]{\includegraphics[width=0.49\textwidth]{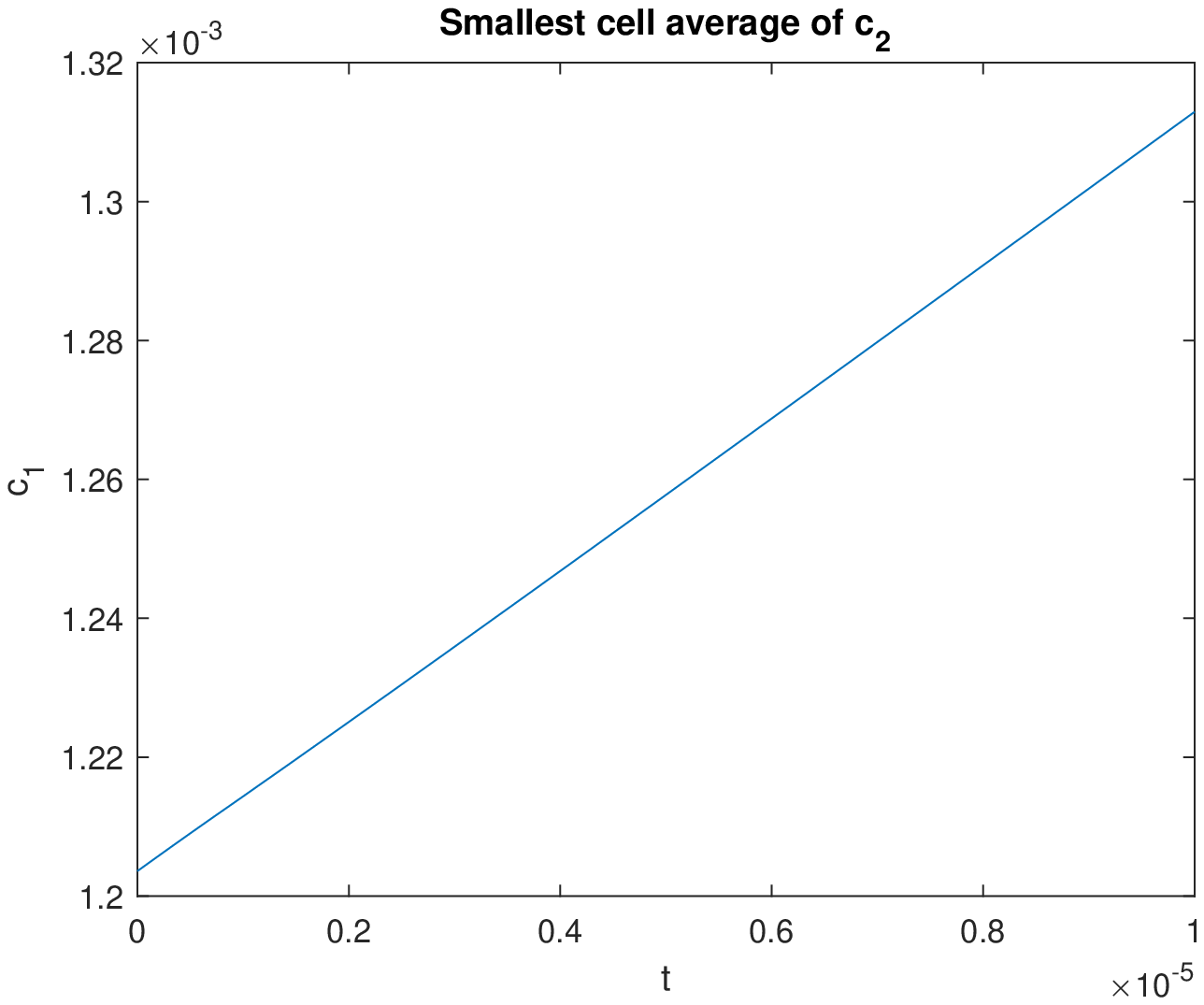}}
\caption{ Smallest cell averages of $c_1, c_2$.
  } \label{ex4avg}
 \end{figure}
 
 \begin{figure}
\centering
\subfigure[]{\includegraphics[width=0.49\textwidth]{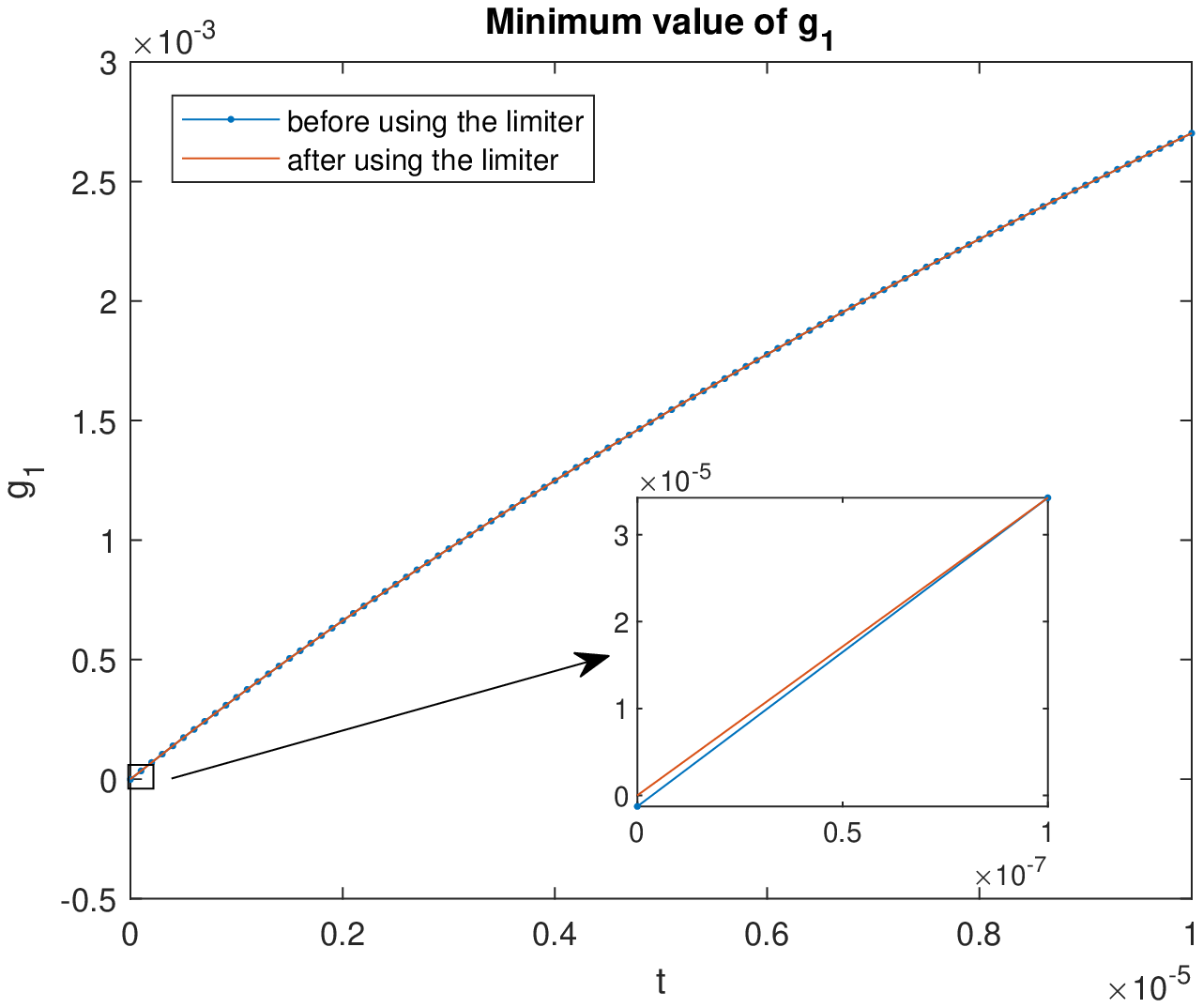}}
\subfigure[]{\includegraphics[width=0.49\textwidth]{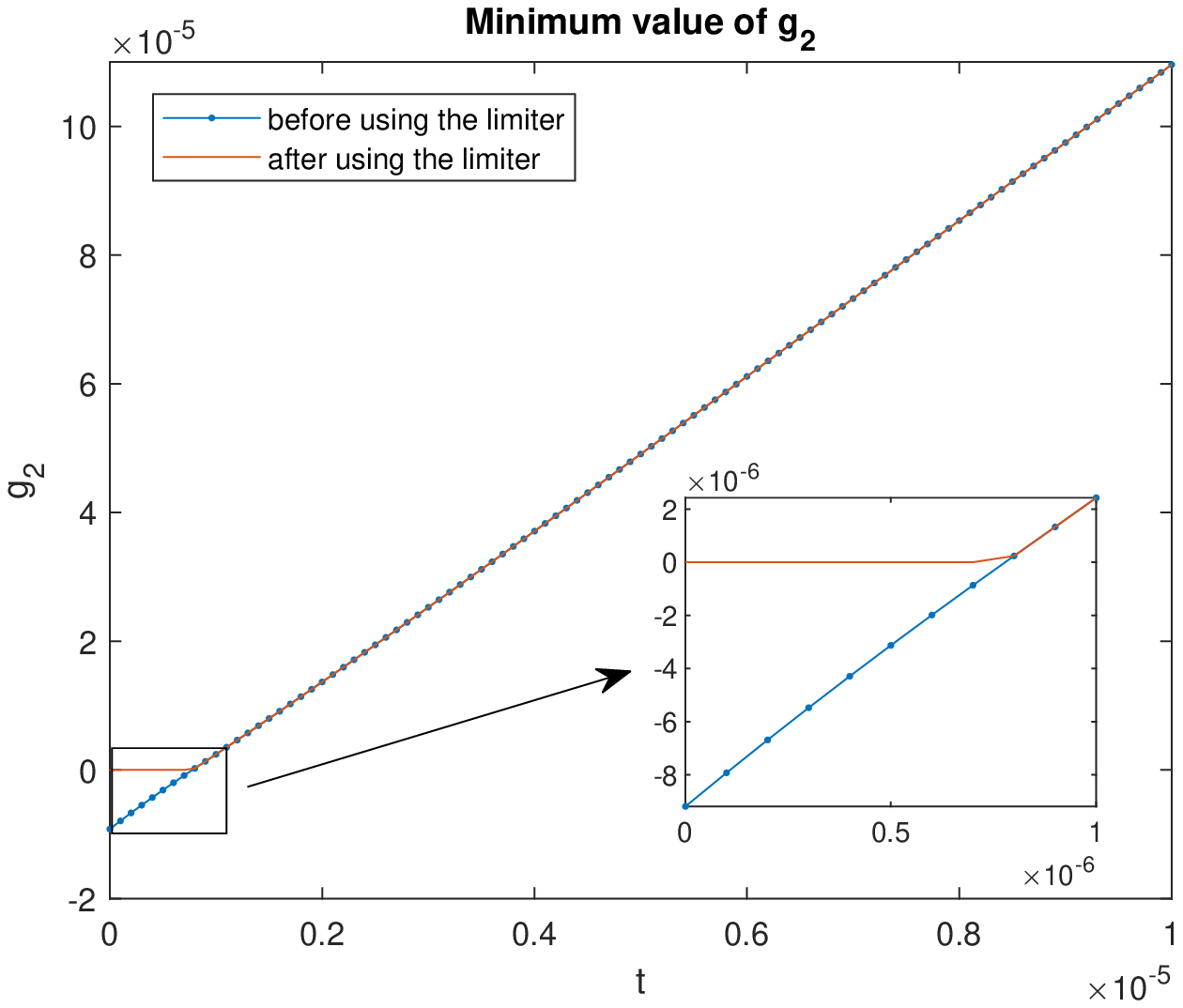}}
\caption{ Minimum values of $g_1,g_2$ on set $S_{jl}$.
  } \label{ex4ming}
 \end{figure} 

{\bf \noindent Test case 4-2.} (conservation of mass and decay of free energy)
Next, we simulate the %contours 
evolution of $c_1, c_2$ and $\psi$ for $t\in (0,0.5]$. 
The contours of $c_1-0.2$ (first column), $c_2-0.2$ (second column) and $\psi$ (right) at $t=0, 0.01, 0.1, 0.5$ are shown in Figure \ref{ex5pattern}. We observe that the contours at $t=0.1$ and $t=0.5$ are indistinguishable. The solution appears approaching the steady state, $c_1=0.2$, $c_2=0.2$ and $\psi=0$.

Figure \ref{fig:ex5} shows the energy decay (see the change on the right vertical axis)  and conservation of mass (see the left vertical axis). Similar results are observed as in Example 2, which confirms the conservation of mass and dissipation of energy.
%We see that the total mass of $c_1$ and $c_2$ stays constant while the free energy is decreasing in time. Here, the free energy levels off after $t=0.05$, at which the system is already in steady state.

%\red{Note that the left y-limits are all $3$, which shows the total mass is exactly conserved.}

\begin{figure}
\centering
\subfigure{\includegraphics[width=0.325\textwidth]{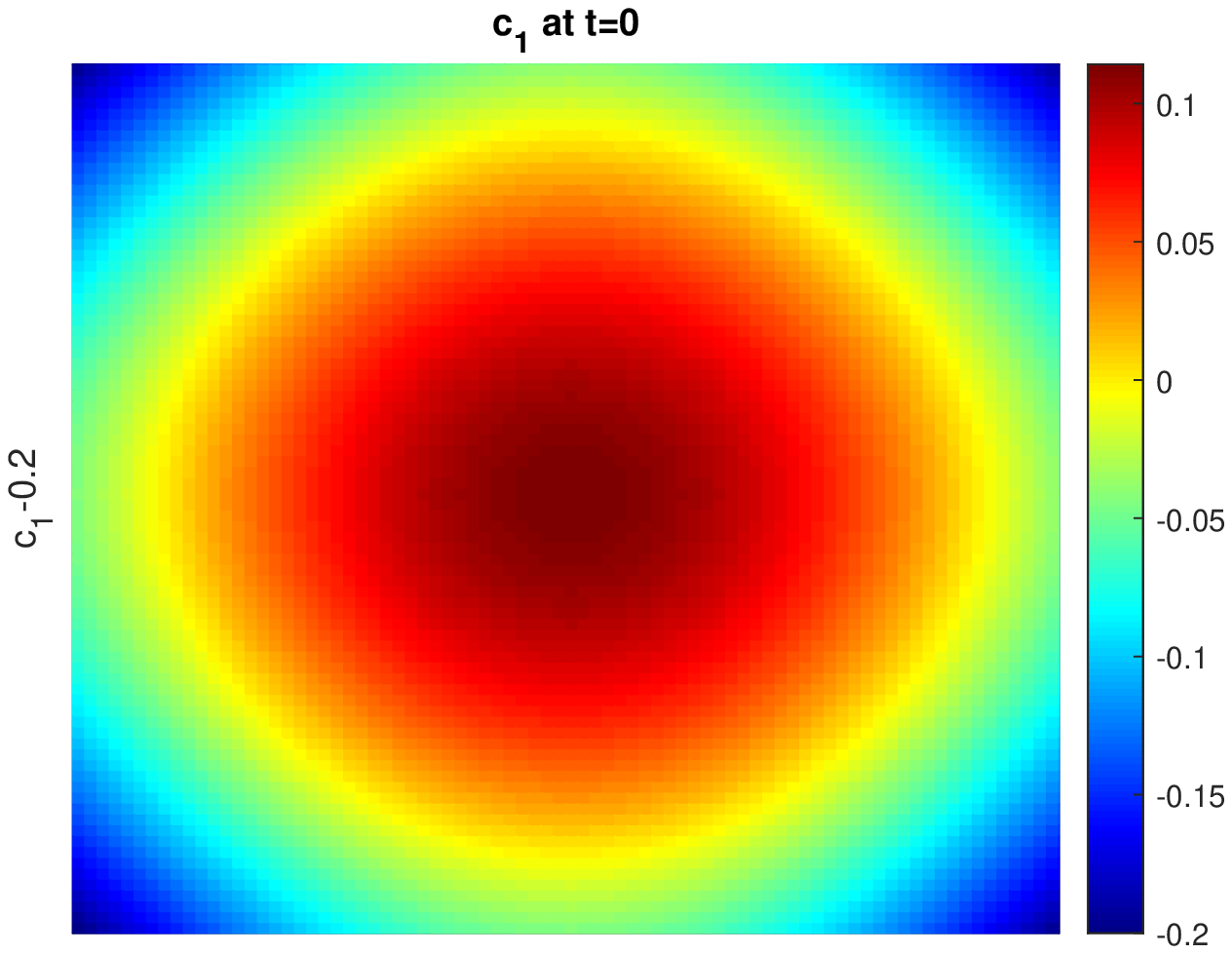}}
\subfigure{\includegraphics[width=0.325\textwidth]{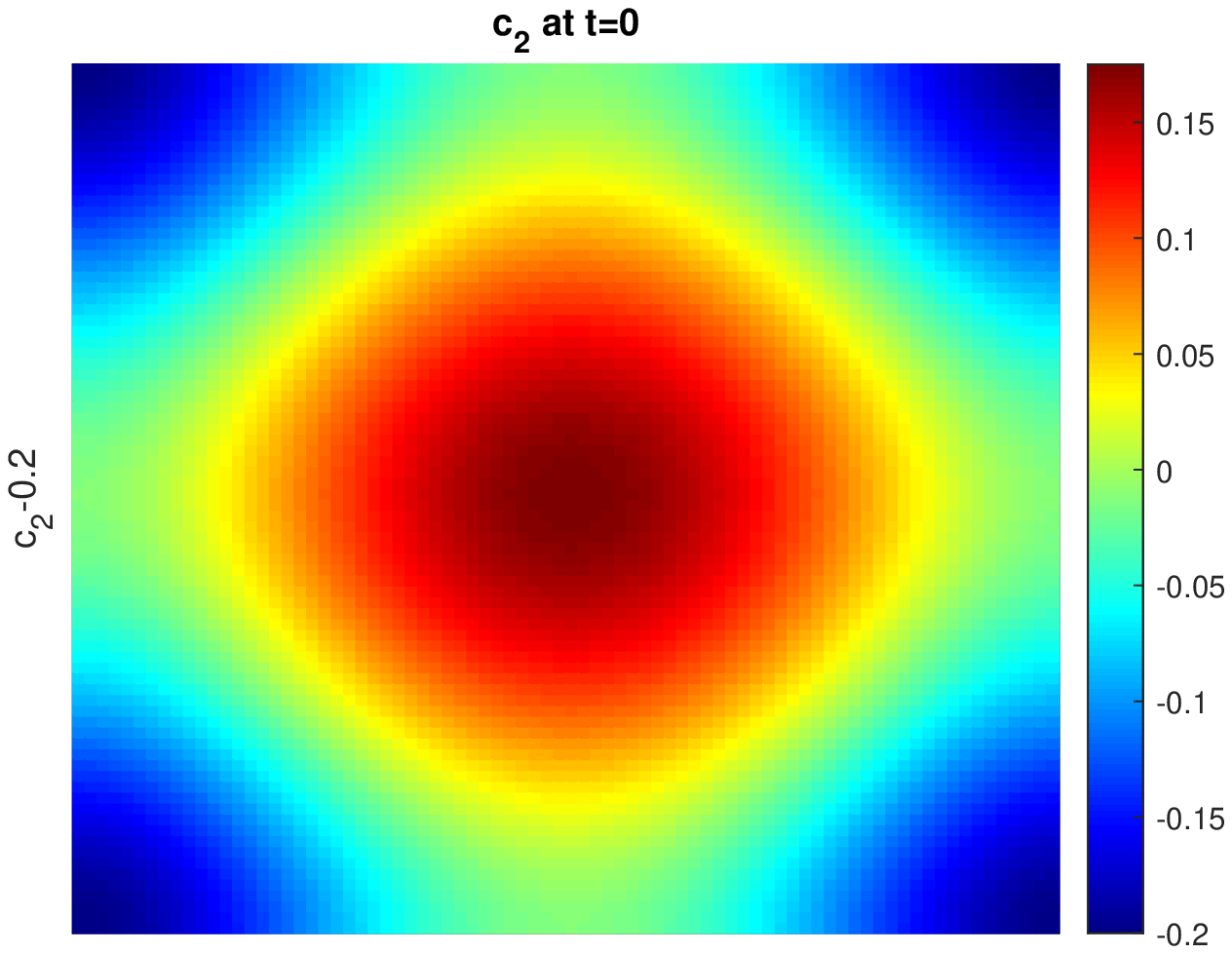}}
\subfigure{\includegraphics[width=0.325\textwidth]{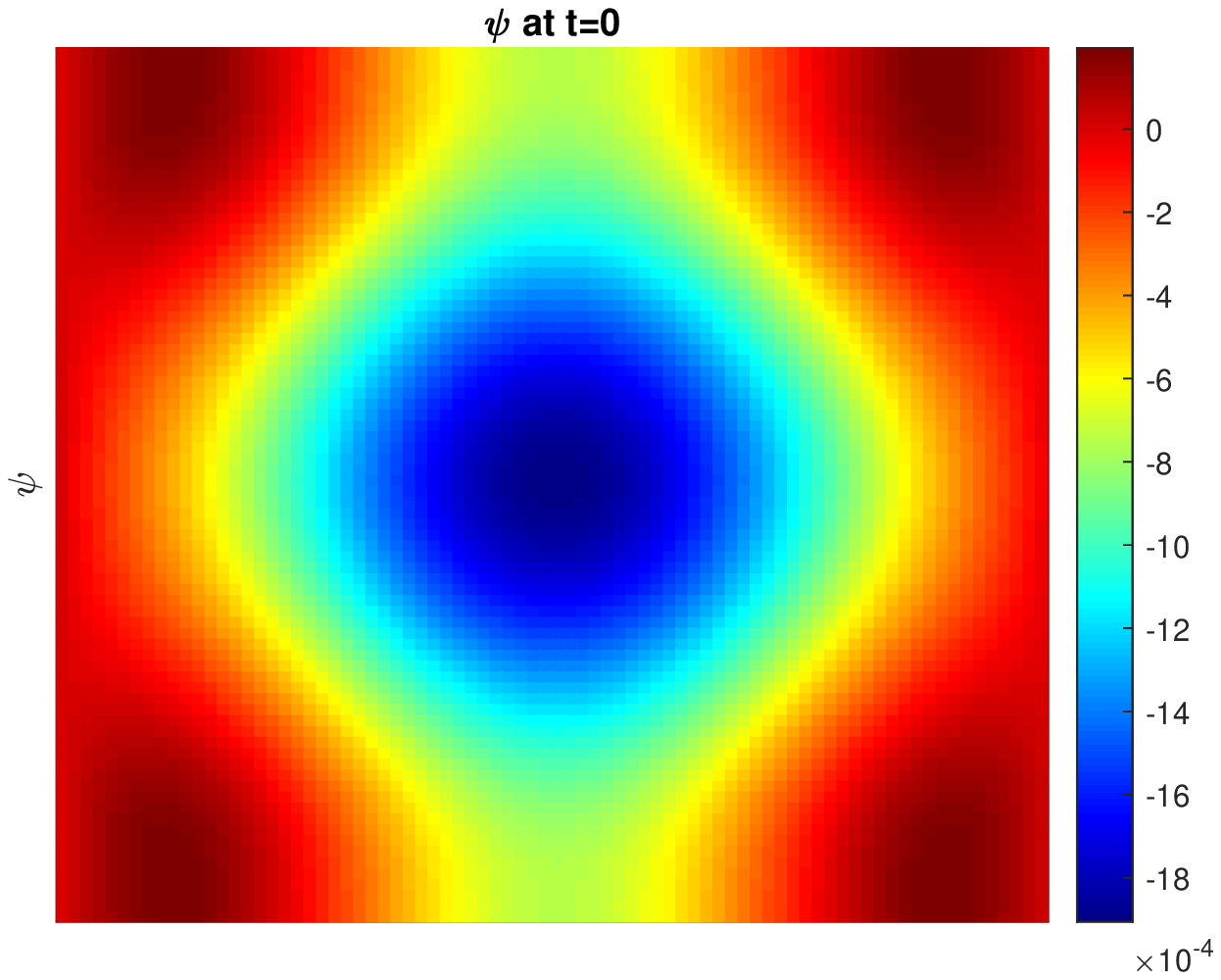}}
\subfigure{\includegraphics[width=0.325\textwidth]{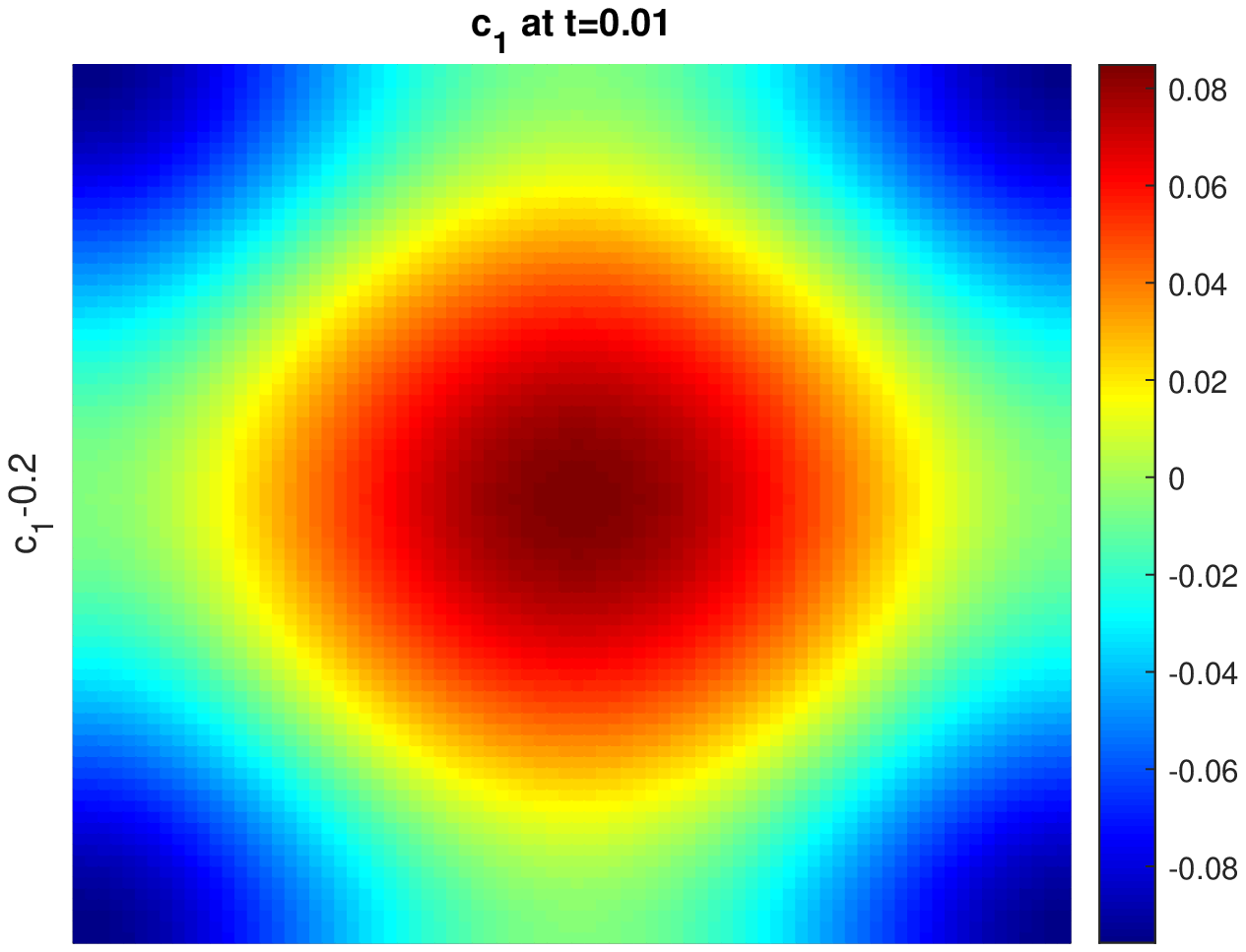}}
\subfigure{\includegraphics[width=0.325\textwidth]{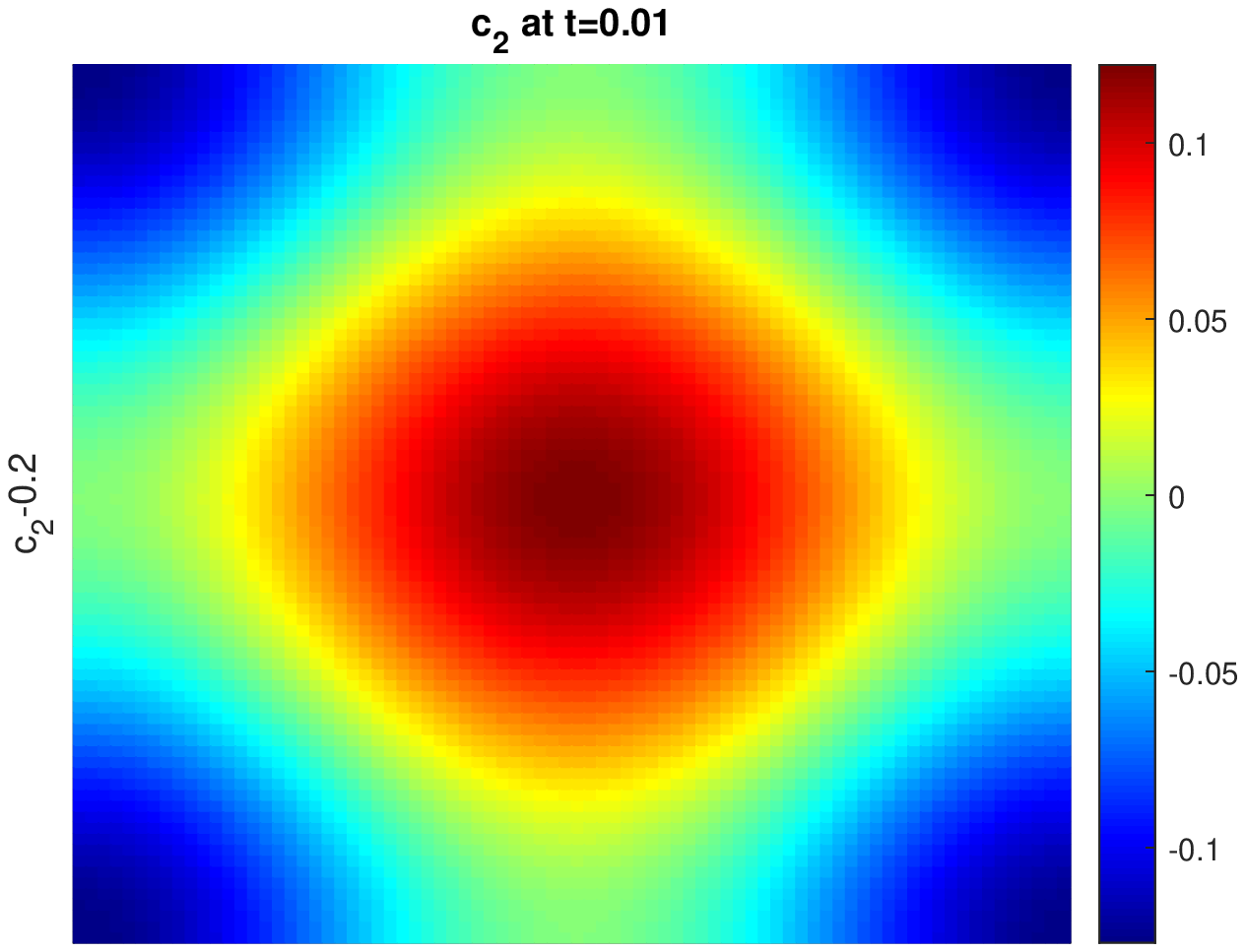}}
\subfigure{\includegraphics[width=0.325\textwidth]{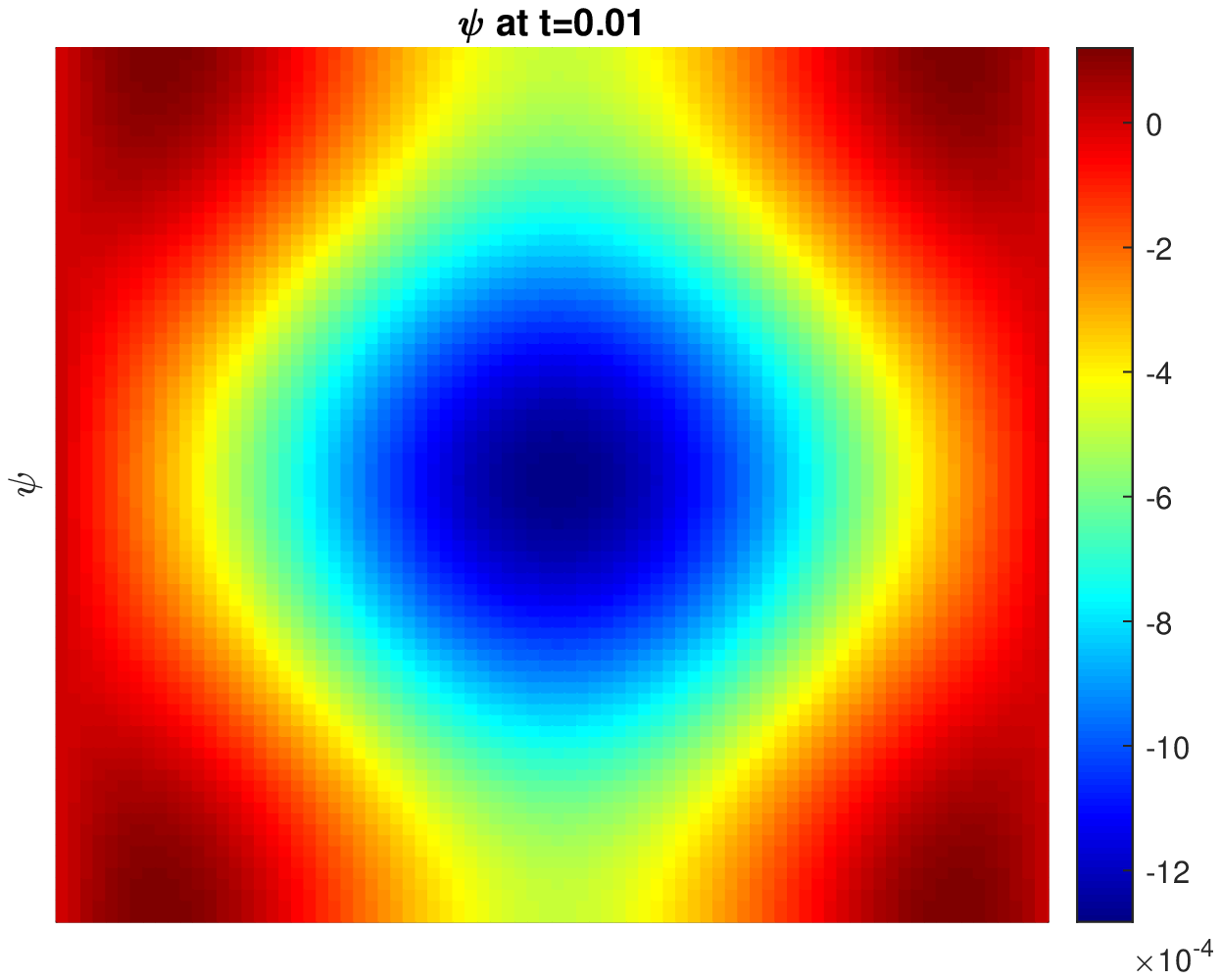}}
\subfigure{\includegraphics[width=0.325\textwidth]{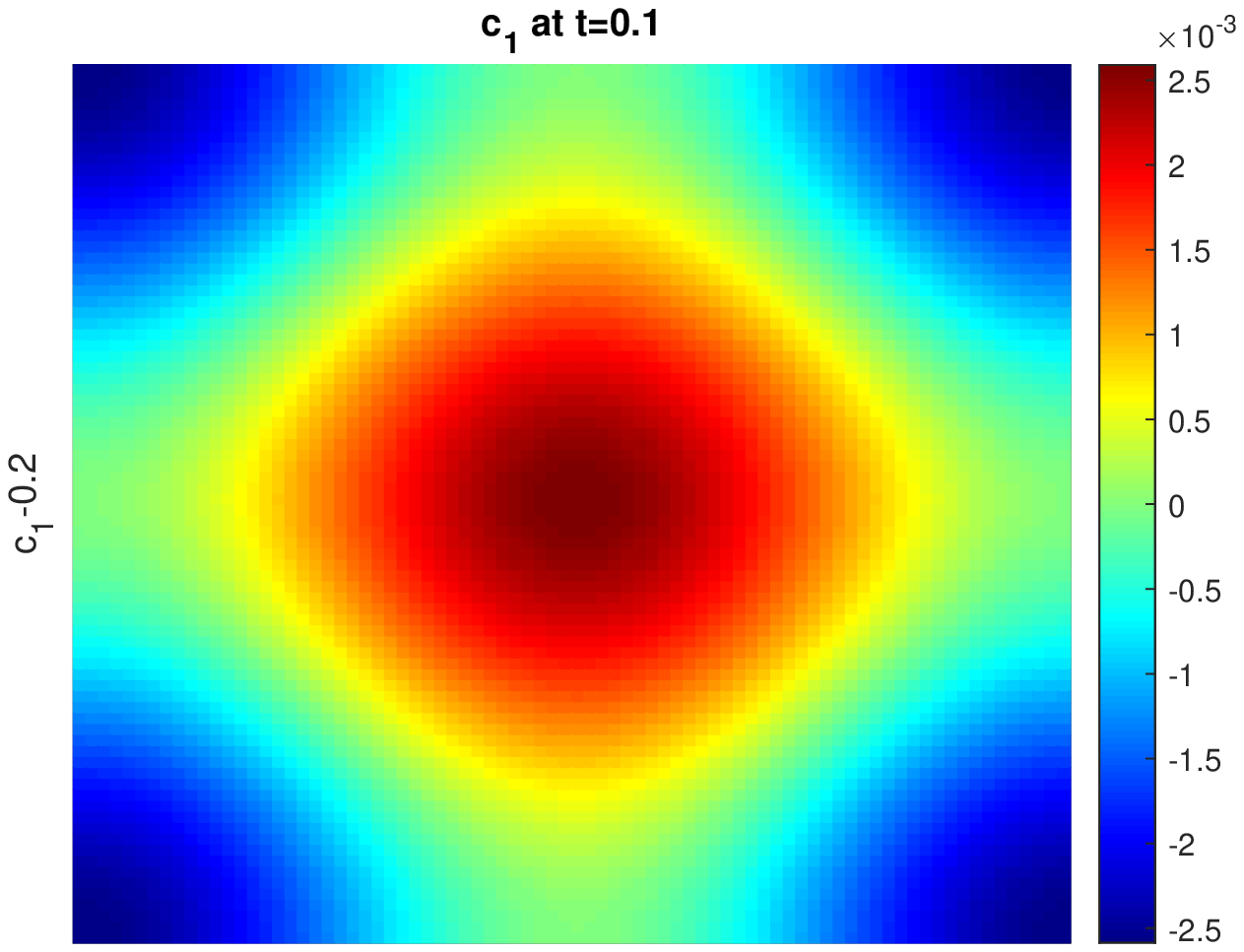}}
\subfigure{\includegraphics[width=0.325\textwidth]{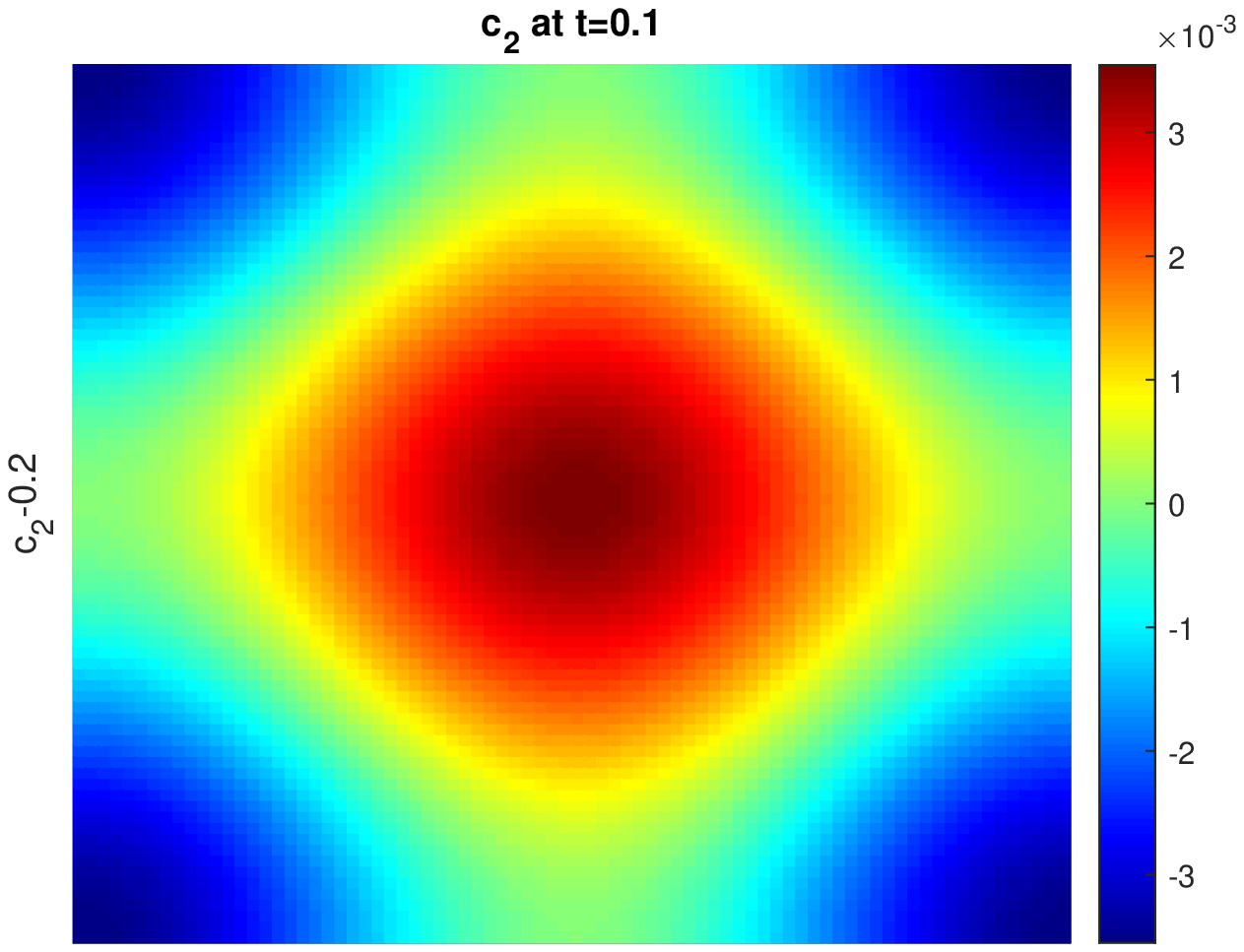}}
\subfigure{\includegraphics[width=0.325\textwidth]{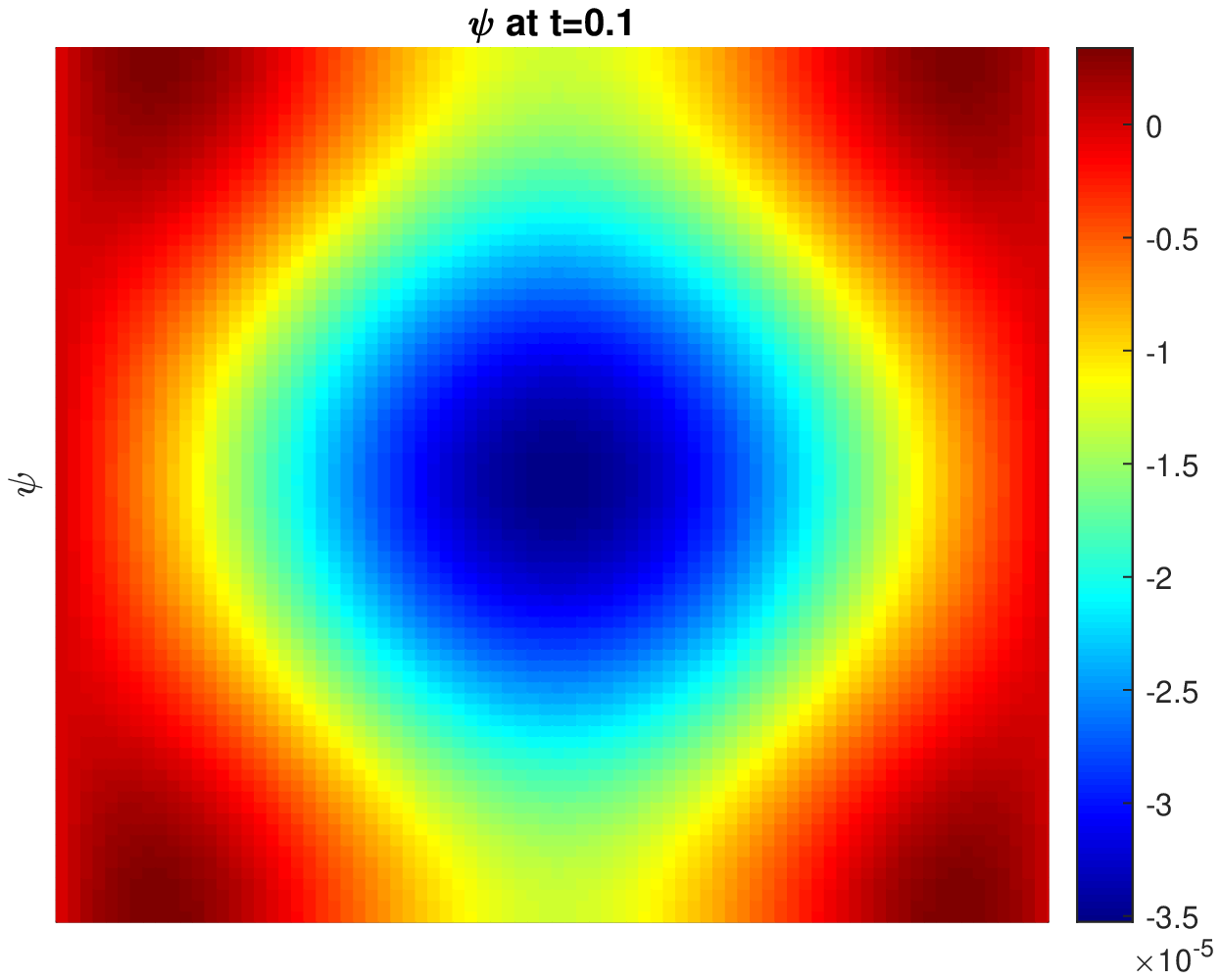}}
%\subfigure{\includegraphics[width=0.325\textwidth]{}}
%\subfigure{\includegraphics[width=0.325\textwidth]{}}
%\subfigure{\includegraphics[width=0.325\textwidth]{}}
\subfigure{\includegraphics[width=0.325\textwidth]{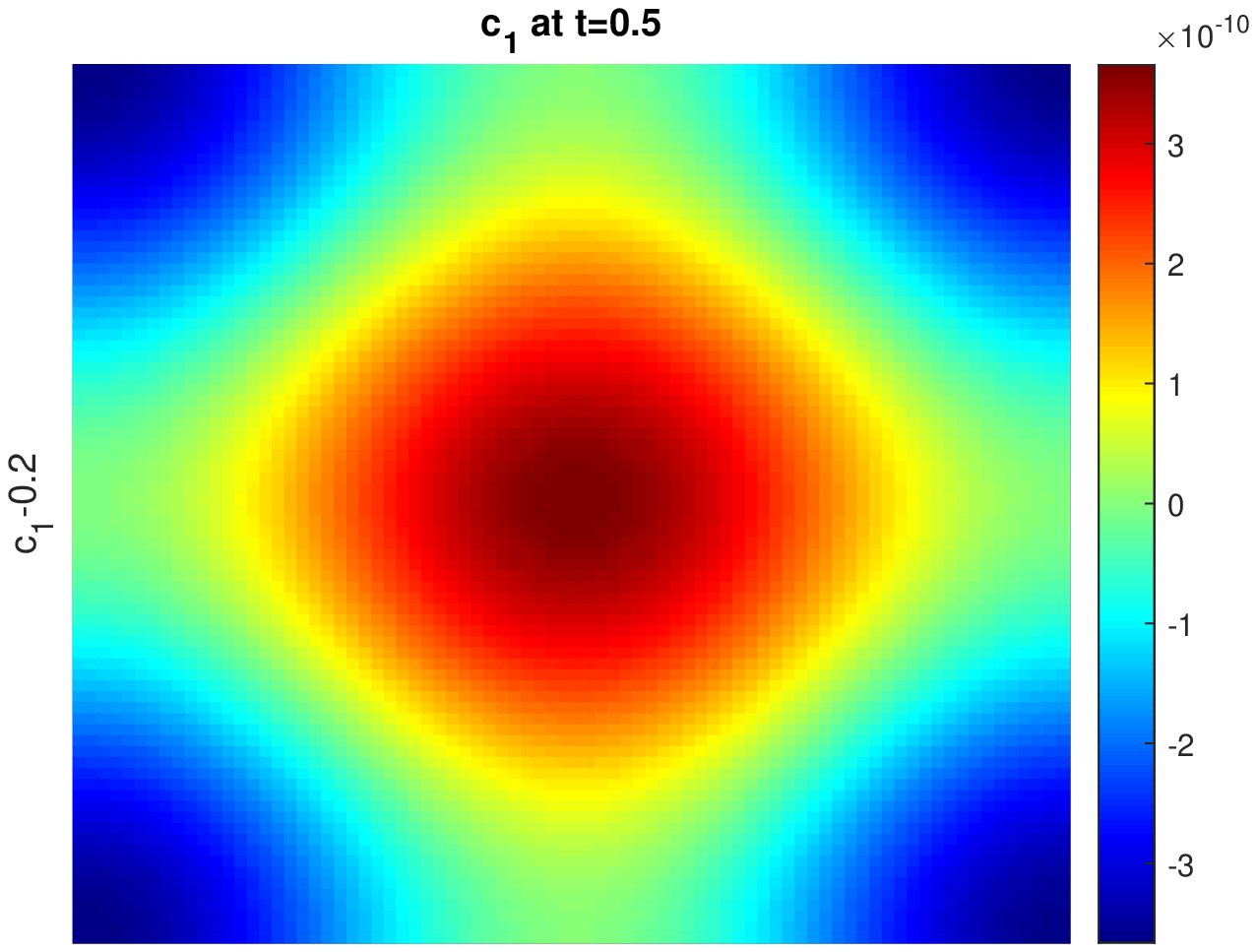}}
\subfigure{\includegraphics[width=0.325\textwidth]{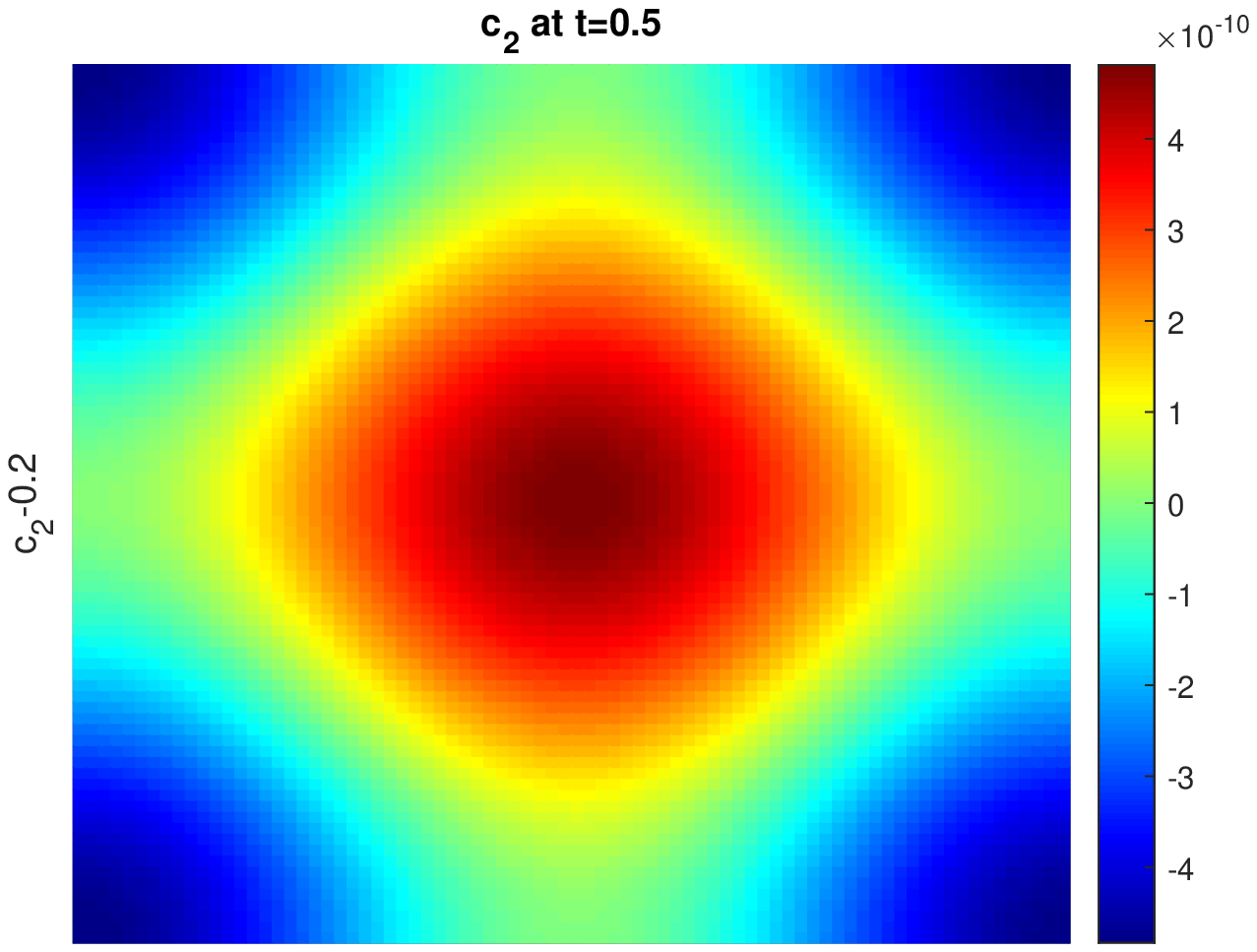}}
\subfigure{\includegraphics[width=0.325\textwidth]{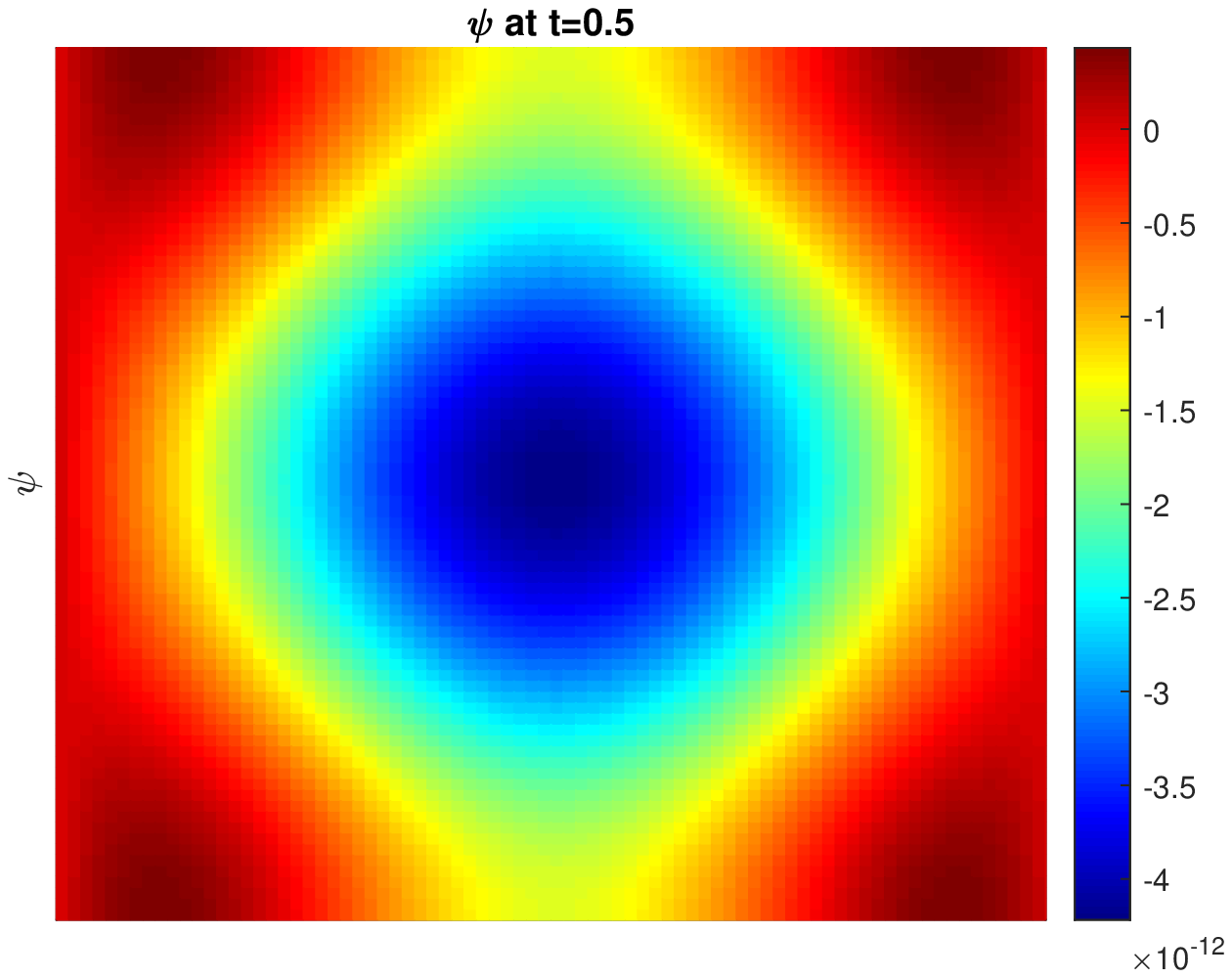}}
\caption{ The contours evolution of $c_1-0.2$, $c_2-0.2$ and $\psi$. } \label{ex5pattern}
\end{figure}

\begin{figure}[!htb]
\caption{Temporal evolution of the solutions}
\centering
\begin{tabular}{cc}
\includegraphics[width=\textwidth]{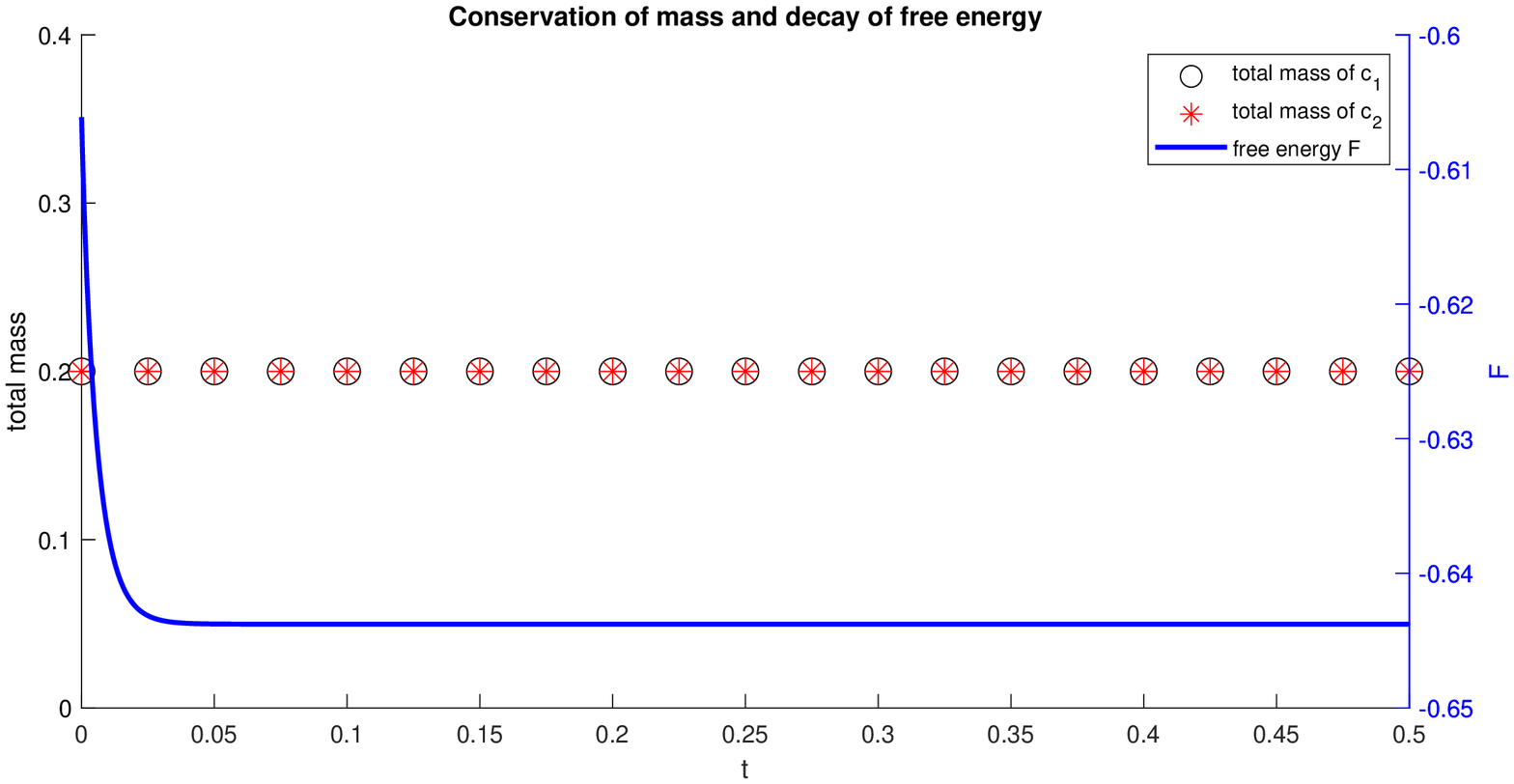} %\\
%{\small Sub-critical mass $M=1$}& {\small Super-critical mass $M=10$}
\end{tabular}
\label{fig:ex5}
\end{figure}

%\section{Conclusion and discussion}
\section{Concluding remarks}
In this paper, we design and analyze third-order DDG schemes for solving time-dependent Poisson-Nernst-Planck systems, such equations are featured with non-negative density solutions. For admissible parameters in the DDG numerical flux, the weighted numerical integration allows for a positive decomposition over a test set of three points. As a result, with the Euler (or SSP-RK) time discretization and the positivity-preserving limiter, the fully discretized scheme is shown to preserve non-negativity of the numerical density. 
%of 
%cell averages of numerical solutions. Positivity of %numerical solutions are then weakly enforced by a %positivity-preserving limiter in reference to positive %cell averages. 
The schemes are also shown to conserve total mass for ion density when zero-flux boundary conditions are imposed, and preserve the steady states. Numerical examples are presented to demonstrate high resolution of the numerical algorithm and illustrate the proven property of positivity preserving and mass conservation, as well as the free energy decay.
\iffalse 
The explicit time discretization is essential in the present framework, yet it requires rather small time steps due to the CFL condition. This issue is even more prominent in two-dimensional case. As a future direction, we intend to investigate possibilities on  the use of implicit or semi-implicit temporal discretization, while still maintaining the positivity-preserving property.
\fi
%In our simulation, we observed a numerical difficulty caused by the exponential weight $e^{-q_i\psi}$, which is extremely close to $0$ when $q_i\psi\gg 1$. This may lead to the loss of accuracy in solving  (\ref{dg+}b). For this weighted projection a more stable evaluation is desired.  
%To improve efficiency, we are working on implicit or semi-implicit temporal discretization, while maintaining all the desired properties.   Through numerical simulation, we observed a numerical difficulty caused by the exponential term $e^{-q_i\psi}$, which is extremely close to 0 when $\psi\gg 1$. This may lead to lose of accuracy in solving  (\ref{dg+}b) numerically. 

%Despite the method has these advantages, the following tips need to be noted. (i) The method is explicit, so time steps need to satisfy the CFL condition; (ii) due to the presence of the exponential function, (\ref{dg+}b) may not be efficient for $\psi$ with a large magnitude. 

%(iii) for given data with high oscillations, a larger penalty parameter $\beta_0$ or more quadrature points may be necessary to ensure the accuracy.

\bigskip

\section*{Acknowledgments}  
Liu was partially supported by the National Science Foundation under Grant DMS1812666.

%\begin{thebibliography}{10}
%\bibliographystyle{plain}
%\bibliography{FENEDGPosref}
%\end{thebibliography}

\end{document}